\theoremstyle{plain}
\newtheorem{theorem}{Theorem}[section]
\newtheorem{lemma}[theorem]{Lemma}
\newtheorem{proposition}[theorem]{Proposition}
\newtheorem{corollary}[theorem]{Corollary}
\theoremstyle{definition}
\newtheorem{definition}{Definition}[section]
\newtheorem{problem}{Problem}[section]
\newtheorem{example}{Example}[section]
\newcommand{\R}{\mathbb{R}}
\newcommand{\Q}{\mathbb{Q}}
\numberwithin{equation}{section}
\theoremstyle{remark}
\newtheorem*{remark}{Remark}
\title{On order preserving and order reversing mappings defined on cones of convex functions}
\date{} 					
\author{
  Lixin Cheng\thanks{Corresponding author} \\
  School of Mathematical Sciences\\
  Xiamen University\\
  Xiamen {\rm 361005}, China\\
  \texttt{lxcheng@xmu.edu.cn} \\
   \And
 Sijie Luo \\
 Yau Mathematical Sciences Center\\
 Tsinghua University\\
 Beijing {\rm 100084}, China\\
  \texttt{luosijie@mail.tsinghua.edu.cn} \\
}
\begin{document}
\maketitle

\begin{abstract}
In this paper, we first show that for a Banach space $X$ there is a fully order reversing mapping $T$ from ${\rm conv}(X)$ (the cone of all extended real-valued lower semicontinuous proper convex functions defined on $X$) onto itself if and only if $X$ is reflexive and linearly isomorphic to its dual $X^*$.  Then we further prove the following generalized ``Artstein-Avidan-Milman'' representation theorem: For every fully order reversing mapping $T:{\rm conv}(X)\rightarrow {\rm conv}(X)$ there exist a linear isomorphism $U:X\rightarrow X^*$, $x_0^*, \;\varphi_0\in X^*$, $\alpha>0$ and $r_0\in\mathbb R$  so that
\begin{equation}\nonumber
(Tf)(x)=\alpha(\mathcal Ff)(Ux+x^*_0)+\langle\varphi_0,x\rangle+r_0,\;\;\forall x\in X,
\end{equation}
where  $\mathcal F: {\rm conv}(X)\rightarrow {\rm conv}(X^*)$ is the Fenchel transform. Hence, these resolve two open questions.
We also show several representation theorems of fully order preserving mappings defined on certain cones of convex functions. For example,
for every fully order preserving mapping $S:{\rm semn}(X)\rightarrow {\rm semn}(X)$ there is a linear isomorphism $U:X\rightarrow X$ so that
\begin{equation}\nonumber
(Sf)(x)=f(Ux),\;\;\forall f\in{\rm semn}(X),\;x\in X,
\end{equation}
where ${\rm semn}(X)$ is the cone of all lower semicontinuous seminorms on $X$.
\end{abstract}

\keywords{Fenchel transform \and order preserving mapping \and order reversing mapping \and convex function \and Banach space}

\section{Introduction}

An elegant theorem of Artstein-Avidan and Milman \cite{A-A-M09} states that every fully order reversing (resp. order preserving) mapping $T$ of the cone ${\rm conv}(\mathbb R^n)$ of all extended real-valued proper convex functions defined on $\mathbb R^n$ is essentially the Legendre transform (resp. the identity). More precisely,
\begin{theorem}[Artstein-Avidan and Milman]
\indent
\begin{itemize}
\item[i)] Every fully  order reversing mapping $S: {\rm conv}(\mathbb R^n)\rightarrow {\rm conv}(\mathbb R^n)$ has the following form:
\begin{equation}
(Sf)(x)=\alpha(\mathcal Lf)(Ex+u_0)+\langle v_0,x\rangle+r_0,\;\;f\in{\rm conv}(\mathbb R^n),\;\;x\in\mathbb R^n;
\end{equation}
\item[ii)] Every fully  order preserving mapping $T: {\rm conv}(\mathbb R^n)\rightarrow {\rm conv}(\mathbb R^n)$ has the following form:
\begin{equation}
(Tf)(x)=\alpha f(Ex+u_0)+\langle v_0,x\rangle+r_0,\;\;f\in{\rm conv}(\mathbb R^n),\;\;x\in\mathbb R^n,
\end{equation}
for some isomorphism $E: \mathbb R^n\rightarrow\mathbb R^n$, $u_0$, $v_0\in\mathbb R^n$, $\alpha>0$ and $r_0\in \mathbb{R}$, where
\[
\mathcal L(f):{\rm conv}(\mathbb R^n)\rightarrow{\rm conv}(\mathbb R^n)
\]
is the Legendre transform defined for $f\in{\rm conv}(\mathbb R^n)$ by
\begin{equation}
(\mathcal Lf)(x)=\sup\{\langle x,v\rangle-f(v):\;v\in\mathbb R^n\},\;\;x\in\mathbb R^n.
\end{equation}
\end{itemize}
\end{theorem}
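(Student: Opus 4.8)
The plan is to reduce the two statements to a single one and then to exploit the lattice structure of $(\mathrm{conv}(\mathbb R^n),\le)$, where $\le$ is the pointwise order. First I would record that the Legendre transform $\mathcal L$ is a fully order reversing involution: $\mathcal L^2=\mathrm{id}$ by the Fenchel--Moreau theorem, and $f\le g$ forces $\mathcal Lf\ge\mathcal Lg$. Consequently, if $T$ is fully order reversing then $S:=\mathcal L\circ T$ is fully order preserving, and conversely $T=\mathcal L\circ S$. Because applying $\mathcal L$ to a function of the form $\alpha f(Ex+u_0)+\langle v_0,x\rangle+r_0$ again produces, by the elementary transformation rules of the Legendre transform under affine substitution, positive scaling, and addition of an affine term, a function of the form $\alpha'(\mathcal Lf)(E'x+u_0')+\langle v_0',x\rangle+r_0'$, it suffices to prove part (ii); part (i) then follows by a routine computation.

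So let $S$ be fully order preserving. Being an order isomorphism, $S$ is automatically a lattice isomorphism: it preserves arbitrary joins (pointwise suprema) and meets (the lower semicontinuous convex envelope of the pointwise infimum). The first invariant I would extract is purely order-theoretic: two functions $f,g$ possess a common upper bound in $\mathrm{conv}(\mathbb R^n)$ precisely when $\mathrm{dom}\,f\cap\mathrm{dom}\,g\ne\emptyset$, since their pointwise maximum is proper exactly when their effective domains meet. Thus $S$ respects incidence of effective domains.

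The heart of the argument is to pin down the action of $S$ on the \emph{delta functions} $\delta_{x,a}$, equal to $a$ at $x$ and $+\infty$ elsewhere. These are characterized order-theoretically as the functions whose effective domain is a single point, a condition expressible through the common-upper-bound relation isolated above; hence $S$ permutes them, say $S\delta_{x,a}=\delta_{\tau(x),\sigma_x(a)}$. Reading off collinearity and, more generally, the convex incidence structure of points from the nesting of domains and the behaviour of meets, I would show that the induced point map $\tau$ preserves this structure, and then invoke an affine rigidity (fundamental-theorem-of-affine-geometry type) argument to conclude that $\tau(x)=Ex+u_0$ for an invertible $E$, while the value map satisfies $\sigma_x(a)=\alpha a+\langle v_0,x\rangle+c$ with $\alpha>0$ forced by order preservation.

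Finally I would extend the formula from delta functions to all of $\mathrm{conv}(\mathbb R^n)$. Every $f$ is the supremum of the affine functions below it, and, dually, the delta functions generate the cone under the lattice operations; since $S$ preserves joins and meets and is already determined on this generating family, the identity $(Sf)(x)=\alpha f(Ex+u_0)+\langle v_0,x\rangle+r_0$ propagates by passing to suprema, with a short consistency check fixing $r_0$. The main obstacle is the third paragraph: recovering enough of the affine convex-incidence geometry of $\mathbb R^n$ from the bare order so that a rigidity theorem applies is where the real work lies, and it is exactly the step that, in the Banach-space generalization, forces reflexivity and self-duality of $X$.
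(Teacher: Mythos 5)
The paper never proves this statement itself: it is quoted from Artstein-Avidan--Milman, and the paper's actual content is the infinite-dimensional analogues (Theorems \ref{presrving} and \ref{selfdual}) whose specialization to $X=\mathbb{R}^n$ recovers it. Your reduction of (i) to (ii) through the involution $\mathcal L$ is exactly the paper's reduction through the Fenchel transform in Corollary \ref{reversing} and Theorem \ref{selfdual}. For the order-preserving half, though, your route is genuinely different: you work with the delta functions $\delta_{x,a}$ (the domain-atomic, meet-generating family) and extract an incidence geometry from the common-upper-bound relation, whereas the paper works with the join-generating family ${\rm aff}(X)=X^*\oplus\mathbb{R}$, characterizes it order-theoretically, proves $T$ is affine on it by identifying the segment $[u,v]$ as the minimum upper bound of $B_{u\vee v}$ (Lemma \ref{upperbound}), and then propagates through suprema of affine minorants. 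Yours is essentially the original Artstein-Avidan--Milman strategy; it is well adapted to $\mathbb{R}^n$, while the paper's sup-generating-class machinery is what scales to Banach spaces (delta functions sit far from the continuous functions on which the paper's $\varrho$-continuity arguments live). Both routes ultimately lean on a fundamental theorem of affine geometry.

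Two concrete gaps remain. First, the propagation step is stated in the wrong direction: delta functions do not sup-generate ${\rm conv}(\mathbb{R}^n)$ (no function whose domain has two points is a supremum of delta functions); they generate under \emph{meets}, $f=\bigwedge_{x\in{\rm dom}(f)}\delta_{x,f(x)}$, so you must either propagate through existing infima (which an order isomorphism does preserve) or first transfer your formula from delta functions to affine functions and only then pass to suprema. As written, ``propagates by passing to suprema'' from a family determined on delta functions does not go through. Second, the claim $\sigma_x(a)=\alpha a+\langle v_0,x\rangle+c$ with one slope $\alpha>0$ for all $x$ is not free: order preservation alone only forces each $\sigma_x$ to be an increasing bijection of $\mathbb{R}$, and excluding non-affine increasing bijections (and making the slope independent of $x$) requires an additional argument that plays delta functions at distinct points against their joins with affine minorants; this is a genuine step in the AAM proof and is the counterpart of the paper's Lemma \ref{upperbound} together with the affinity argument in Theorem \ref{thm}.
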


A further question we concern most is about the behavior of the ``Artstein-Avidan-Milman'' theorem if we substitute a general Banach space $X$ for $\mathbb R^n$. The question is apparently natural and  worth  considering, which can be divided into the following three more concrete questions.

\begin{problem}\label{reversing existence}
For what infinite dimensional Banach spaces $X$, does there
exist a fully order reversing mapping $T: {\rm conv}(X)\rightarrow {\rm conv}(X)$?
\end{problem}

\begin{problem}\label{A-A-M theorem}
Does the ``Artstein-Avidan-Milman'' theorem hold again for all fully order reversing mappings $T: {\rm conv}(X)\rightarrow {\rm conv}(X)$?
\end{problem}

\begin{problem}\label{A-A-M theorem1}
Does the ``Artstein-Avidan-Milman'' theorem hold true for every order preserving mapping $S: {\rm conv}(X)\rightarrow {\rm conv}(X)$?
\end{problem}

Recall that a convex function $f$ defined on a Banach space $X$ is said to be proper if it is nowhere $-\infty$-valued and with its essential domain ${\rm dom}(f)\neq\emptyset$. For a partially ordered set $P$,  a mapping $T:P\rightarrow P$ is said to be  fully order preserving (resp. reversing) provided it is a bijection and satisfies $f\geq g\Longleftrightarrow Tf\geq Tg$ ( resp. $f\geq g\Longleftrightarrow Tf\leq Tg$).  Note that in a Banach space $X$ the Legendre transform $\mathcal L: {\rm conv}(\mathbb R^n)\rightarrow{\rm conv}(\mathbb R^n)$ becomes into the following Fenchel transform:
\[
\mathcal F: {\rm conv}(X)\rightarrow{\rm conv}_{*}(X^*),
\]
which is defined for $f\in {\rm conv}(X)$ by
\begin{equation}\nonumber
(\mathcal F f)(x^*)=\sup\{\langle x^*,x\rangle-f(x):x\in X\},\;\;x^*\in X^*,
\end{equation}
where ${\rm conv}(X)$ (resp. ${\rm conv}_{*}(X^*)$) is the cone of all extended real-valued lower semicontinuous (resp. $w^*$-lower semicontinuous) proper convex functions on $X$ (resp. $X^*$).

In 2015, Iusem, Reem and Svaiter \cite{I-R-S}   generalized the ``Artstein-Avidan-Milman'' theorem  to a general Banach space $X$ in the following manner.

\begin{theorem}[Iusem-Reem-Svaiter]\label{Iusem-Reem-Svaiter}
Suppose that $X$ is a Banach space. Then,
\begin{itemize}
\item[i)] for every fully  order reversing mapping $S: {\rm conv}(X)\rightarrow {\rm conv}_{*}(X^*)$ there exists an isomorphism $U: X\rightarrow X$, $x_0\in X$, $x^*_0\in X^*$, $\alpha>0$, and $r_0\in\mathbb R$ so that
\begin{equation}\nonumber
(Sf)(x^*)=\alpha(\mathcal Ff)(U^*x^*+x^*_0)+\langle x^*,x_0\rangle+r_0,\;\;f\in{\rm conv}(X),\;\;x^*\in X^*;
\end{equation}
\item[ii)] for every fully  order preserving mapping $T: {\rm conv}(X)\rightarrow {\rm conv}(X)$ there exists an isomorphism $U: X\rightarrow X$, $x_0\in X$, $x^*_0\in X^*$, $\alpha>0$, and $r_0\in\mathbb R$ so that
\begin{equation}\nonumber
(Tf)(x)=\alpha f(Ux+x_0)+\langle x^*_0,x\rangle+r_0,\;\;f\in{\rm conv}(X),\;\;x\in X.
\end{equation}
\end{itemize}
\end{theorem}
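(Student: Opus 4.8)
The plan is to observe that parts (i) and (ii) are equivalent via the Fenchel transform, and then to prove the order preserving case (ii) by a Banach-space adaptation of the Artstein-Avidan-Milman method. Since $\mathcal{F}:\mathrm{conv}(X)\to\mathrm{conv}_*(X^*)$ is itself a fully order reversing bijection (Fenchel--Moreau), given a fully order reversing $S:\mathrm{conv}(X)\to\mathrm{conv}_*(X^*)$ the composite $T:=\mathcal{F}^{-1}\circ S$ is a fully order preserving self-map of $\mathrm{conv}(X)$: indeed $f\ge g\iff Sf\le Sg\iff \mathcal{F}^{-1}Sf\ge\mathcal{F}^{-1}Sg$. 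Once (ii) gives the change-of-variable form of $T$, part (i) follows by writing $S=\mathcal{F}\circ T$ and computing the conjugate of $\alpha f(Ux+x_0)+\langle x_0^*,x\rangle+r_0$ by the routine conjugacy calculus for affine substitutions and positive scalings. So I would concentrate on (ii).

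First I would record that a fully order preserving bijection $T$ is an order isomorphism, hence it carries the up-set $\{g:g\ge f\}$ and the down-set $\{g:g\le f\}$ of each $f$ order-isomorphically onto those of $Tf$. The geometric input is then two purely order-theoretic characterizations: (a) the point functions $c+\iota_{\{x\}}$ (equal to $c$ at $x$ and $+\infty$ elsewhere) are exactly the elements whose up-set is a chain; and (b) the continuous affine functions $\langle x^*,\cdot\rangle+d$ are exactly the elements whose down-set is a chain. For (b) the key observation is that if $g\le\langle x^*,\cdot\rangle+d$ then $g-(\langle x^*,\cdot\rangle+d)$ is convex and bounded above on all of $X$, hence constant along every line and therefore constant; so the down-set is the single chain $\{\langle x^*,\cdot\rangle+d+t:t\le 0\}$. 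Consequently $T$ maps point functions to point functions and affine functions to affine functions, yielding a bijection $\sigma:X\to X$ with $T(c+\iota_{\{x\}})=\theta_x(c)+\iota_{\{\sigma(x)\}}$, each $\theta_x$ an increasing bijection of $\mathbb{R}$, together with an analogous slope bijection on the affine functions.

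Next I would exploit the incidence between the two families. Because $c+\iota_{\{x\}}\ge\langle x^*,\cdot\rangle+d$ holds precisely when $\langle x^*,x\rangle\le c-d$, preservation of order turns the evaluation pairing into a functional equation relating $\sigma$, the slope map, the intercept transformations and the $\theta_x$. Specializing to constants ($x^*=0$) pins down the $\theta_x$ as affine with a common positive slope $\alpha$, and then letting the slopes vary forces $\sigma$ to be an affine homeomorphism $\sigma(x)=U^{-1}(x-x_0)$ with $U$ a linear isomorphism of $X$ and the slope map its adjoint inverse, together with the affine functional $x_0^*$ and constant $r_0$. Finally, since every $f\in\mathrm{conv}(X)$ is the least upper bound of its family of continuous affine minorants (Fenchel--Moreau, via Hahn--Banach) and an order isomorphism preserves existing suprema, I would pass to the supremum to extend the formula $Tf=\alpha f(Ux+x_0)+\langle x_0^*,x\rangle+r_0$ from affine functions to all of $\mathrm{conv}(X)$.

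The main obstacle is the reconstruction step in infinite dimensions: one must upgrade the order-theoretic bijections $\sigma$ and the slope map from mere set bijections to bounded linear isomorphisms, and must show that the scaling $\theta_x$ has the same positive slope $\alpha$ for every $x$. In finite dimensions this is aided by compactness and local convex geometry that are unavailable here; instead the continuity has to be drawn out of the incidence equation together with automatic-continuity and closed-graph arguments, and the uniformity of $\alpha$ from the rigidity of the pairing across all slopes. A secondary technical point is to verify the two chain characterizations carefully in the lower semicontinuous (and, on the dual side, weak-$*$ lower semicontinuous) setting, where unbounded functions and the non-existence of certain joins must be handled with care.
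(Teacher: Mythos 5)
First, a point of orientation: the paper does not prove Theorem \ref{Iusem-Reem-Svaiter} at all --- it is quoted from Iusem, Reem and Svaiter \cite{Iusem-Reem-Svaiter JFA} --- so the fair comparison is with the paper's self-contained proof of the closely analogous (in fact more general) Theorem \ref{presrving}, of which part ii) is the case $Y=X$. Your reduction of i) to ii) via $T=\mathcal F^{-1}\circ S$ and the conjugacy calculus for affine substitutions is correct, and is essentially how the paper itself shuttles between the preserving and reversing settings (Corollary \ref{reversing}, Theorem \ref{selfdual}). Your two chain characterizations are also correct: the up-set of a point function $c+\delta_{\{x\}}$ is $\{d+\delta_{\{x\}}:d\geq c\}$, and a proper convex $g$ dominated by an affine $\ell$ has $g-\ell$ convex and bounded above, hence constant.

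The genuine gap is the step you yourself flag and then defer: upgrading the point bijection $\sigma$, the slope bijection and the scalings $\theta_x$ from set maps satisfying the incidence identity $\langle x^*,x\rangle\leq c-d$ to a single bounded linear isomorphism with a uniform $\alpha>0$. This cannot be dispatched by ``automatic continuity and closed-graph arguments'': the closed graph theorem presupposes linearity, and linearity (even additivity of $\sigma$) is precisely what is not yet established; in infinite dimensions there is no compactness or local convex geometry to force it, and this reconstruction is where essentially all of the work in \cite{Iusem-Reem-Svaiter JFA} is concentrated. So as written the proposal is a correct strategy with its central step missing. It is worth noting how the paper's Theorem \ref{presrving} sidesteps this entirely: it never introduces the point functions or the map $\sigma$, but works inside the sup-generating class ${\rm aff}(X)=X^*\oplus\mathbb R$, where a segment $[u,v]$ with orderless endpoints is characterized order-theoretically as the \emph{minimum upper bound} of $A_{u\vee v}$ (Lemma \ref{upperbound}); hence $T[u,v]=[Tu,Tv]$, so $T|_{{\rm aff}(X)}$ is affine, its $w^*$-continuity comes for free from the order-continuity results (Lemma \ref{continuity}, Theorem \ref{rho continuity}), and the operator $U$ appears as the pre-adjoint of the induced linear map on $X^*$. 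If you want a complete argument along your lines you would either have to reproduce the reconstruction analysis of \cite{Iusem-Reem-Svaiter JFA}, or switch at the affinity step to a segment-preservation argument of the paper's type.
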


The second conclusion of the ``Iusem-Reem-Svaiter'' theorem \big{(}Theorem \ref{Iusem-Reem-Svaiter} {\rm ii)} \big{)} can be regarded as a positive answer to Problem \ref{A-A-M theorem1}, i.e. a perfect extension of  the ``Artstein-Avidan-Milman '' theorem for fully order preserving mappings. But Problems \ref{reversing existence} and \ref{A-A-M theorem} remain open.
The first conclusion \big{(}Theorem \ref{Iusem-Reem-Svaiter} {\rm i)}\big{)} can be understood as a characterization of fully  order reversing mappings $T: {\rm conv}(X)\rightarrow {\rm conv}(X^*)$ whenever $X$ is reflexive. ``However, the issue of characterizing fully order reversing mappings from ${\rm conv}(X)$ to itself (or, from ${\rm conv}(X)$ to ${\rm conv}(X^*)$ in nonreflexive case) has left as an open problem, deserving future research \cite[p.89]{I-R-S}.'' Iusem, Reem and Svaiter \cite{I-R-S} further pointed out: In fact,  we don't even know whether there is a fully  order reversing mapping $T: {\rm conv}(X)\rightarrow {\rm conv}(X)$ (or, ${\rm conv}(X)\rightarrow {\rm conv}(X^*)$ in the nonreflexive case).

Order preserving isomorphisms defined on sets with specific structures had been studied for some time, in some cases in connection with applications to physics, however, Artstein-Avidan and Milman's work \cite{A-A-M09} was the first to deal with
this issue in connection with the cone of convex functions and results in \cite{A-A-M09} were the starting point of several interesting developments. For further information in this direction we refer to \cite{{A-A-F-M12},{A-A-M07},{A-A-M08},{A-A-M11},{I-R-S}} and references therein.

The main results of this paper are as follows. (Their proofs are starting from Section 7.) The next result ({Theorem \ref{reflexive selfdual}}) presents a characterization of a Banach space $X$ for which there exists  a fully order reversing mapping on ${\rm conv}(X)$. Therefore,  it gives {Problem \ref{reversing existence}} a complete answer.

\begin{theorem}\label{reflexive selfdual}
For  a Banach space $X$ there is a fully order reversing mapping
$S: {\rm conv}(X)\rightarrow {\rm conv}(X)$  if and only if $X$ is reflexive and linearly isomorphic to its dual $X^*$.
\end{theorem}
The next theorem gives Problem 1.3 an affirmative answer, and it can be understood as a perfect extension of the ``Artstein-Avidan-Milman'' theorem for order reversing mappings.
\begin{theorem}\label{A-A-M theorem existence}
Suppose that $X$ is a Banach space. Then for every fully order reversing mapping $S:{\rm conv}(X)\rightarrow {\rm conv}(X)$
there exists a linear isomorphism $U:X\rightarrow X^*$, $x_0^*, \;\varphi_0\in X^*$, $\alpha>0$ and $r_0\in\mathbb R$  so that
\begin{equation}\nonumber
(Sf)(x)=\alpha(\mathcal Ff)(Ux+x^*_0)+\langle\varphi_0,x\rangle+r_0,\;\;\forall x\in X,
\end{equation}
where  $\mathcal F: {\rm conv}(X)\rightarrow {\rm conv}(X^*)$ is the Fenchel transform.
\end{theorem}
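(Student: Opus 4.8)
The plan is to reduce the statement to the Iusem--Reem--Svaiter representation of fully order reversing maps into the dual cone (Theorem \ref{Iusem-Reem-Svaiter} i)), exploiting the self-duality guaranteed by Theorem \ref{reflxive selfdual}. The key observation is that the existence of $T$ is not a free hypothesis: by Theorem \ref{reflxive selfdual} it already forces $X$ to be reflexive and linearly isomorphic to $X^*$, and it is precisely this isomorphism that will let us transport $T$ into a map whose target is ${\rm conv}_*(X^*)$, where the known theorem applies.

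First I would fix a linear isomorphism $V:X\to X^*$, available by Theorem \ref{reflxive selfdual}. Reflexivity of $X$ means the $w^*$-topology on $X^*$ agrees with its weak topology; since a proper convex function is lower semicontinuous iff it is weakly lower semicontinuous, this yields the identification ${\rm conv}_*(X^*)={\rm conv}(X^*)$, so that the Fenchel transform $\mathcal F:{\rm conv}(X)\to{\rm conv}(X^*)$ appearing in the target formula is a genuine fully order reversing bijection between these cones. Next I would build, out of $T$, a fully order reversing map with target ${\rm conv}_*(X^*)$. Define $\Phi:{\rm conv}(X)\to{\rm conv}(X^*)$ by $\Phi(f)=f\circ V^{-1}$. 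Because $V^{-1}$ is a linear homeomorphism, $\Phi(f)$ remains proper, convex and lower semicontinuous, and $\Phi$ is a fully order preserving bijection with inverse $g\mapsto g\circ V$. Hence
\begin{equation}\nonumber
S:=\Phi\circ T:{\rm conv}(X)\to{\rm conv}(X^*)={\rm conv}_*(X^*)
\end{equation}
is a bijection which is fully order reversing, being the composite of the order reversing $T$ with the order preserving $\Phi$.

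Finally I would apply Theorem \ref{Iusem-Reem-Svaiter} i) to $S$: it furnishes an isomorphism $W:X\to X$, $x_0\in X$, $x_0^*\in X^*$, $\alpha>0$ and $r_0\in\mathbb R$ with $(Sf)(x^*)=\alpha(\mathcal Ff)(W^*x^*+x_0^*)+\langle x^*,x_0\rangle+r_0$. Since $(Sf)(x^*)=(Tf)(V^{-1}x^*)$, writing $x=V^{-1}x^*$ (so $x^*=Vx$) gives $(Tf)(x)=\alpha(\mathcal Ff)(W^*Vx+x_0^*)+\langle Vx,x_0\rangle+r_0$. Setting $U:=W^*V$ and letting $\varphi_0\in X^*$ be the functional determined by $\langle\varphi_0,x\rangle=\langle Vx,x_0\rangle$ then produces exactly the asserted form.

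The substantive inputs are Theorem \ref{reflxive selfdual} and Theorem \ref{Iusem-Reem-Svaiter} i); what remains is a reduction rather than new combinatorics, so I do not expect a serious obstacle. The only points requiring genuine care are the topological identification ${\rm conv}_*(X^*)={\rm conv}(X^*)$, which must be justified through reflexivity, and the concluding bookkeeping: checking that $U=W^*V$ is indeed an isomorphism from $X$ onto $X^*$, and that $x\mapsto\langle Vx,x_0\rangle$ is a bounded linear functional on $X$, hence representable as $\langle\varphi_0,x\rangle$ for a unique $\varphi_0\in X^*$.
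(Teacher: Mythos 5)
Your argument is correct, and its overall strategy coincides with the paper's: both proofs first invoke Theorem \ref{reflxive selfdual} to extract reflexivity and a linear isomorphism between $X$ and $X^*$, use reflexivity to identify ${\rm conv}_*(X^*)$ with ${\rm conv}(X^*)$, transport along the isomorphism, and then quote a representation theorem. The difference lies in which representation theorem carries the load. You compose $\Phi\circ T$ to obtain a fully order \emph{reversing} map ${\rm conv}(X)\rightarrow{\rm conv}_*(X^*)$ and apply the external Iusem--Reem--Svaiter result (Theorem \ref{Iusem-Reem-Svaiter} i)). The paper instead composes $T\circ S^{-1}\circ\mathcal F$, where $S^{-1}\circ\mathcal F$ is fully order reversing, so that the composite is a fully order \emph{preserving} self-map of ${\rm conv}(X)$, and then applies its own Theorem \ref{presrving} --- the order-preserving representation theorem that the paper reproves from scratch with the sup-generating-class machinery. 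Your route is shorter and the bookkeeping ($U=W^*V$, $\varphi_0$ representing $x\mapsto\langle Vx,x_0\rangle$) is handled correctly, but it treats the hardest ingredient as a black box; the paper's choice keeps the entire chain of reasoning internal to its own framework, which is the point of its claim to give a ``much simpler'' and self-contained treatment. Either reduction yields the stated formula, so there is no gap --- only a different (and legitimate) choice of which known theorem to reduce to.
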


Let ${\rm subl}(X)$, ${\rm mink}(X)$ and ${\rm semn}(X)$ be the three common cones of extended-real-valued lower semi continuous convex functions defined on the Banach space $X$, successively: all sublinear functions, all Minkowski functionals and all seminorms.

\begin{theorem}\label{sublinear functions}
Let $T:{\rm subl}(X)\to {\rm subl}(X)$ be a fully order preserving mapping, then there exists a linear isomorphism $E:\to X$ and a bounded linear functional $x^{*}_{0}\in X^{*}$ so that
\[
(Tf)(x)=f(Ex)+\langle x^{*}_{0},x\rangle \;\;\forall x\in X,~f\in{\rm subl}(X).
\]
\end{theorem}

\begin{theorem}\label{some classes representation}
Suppose that $C$ is one of the two cones $\{{\rm mink}(X), {\rm semn}(X)\}$. Then for every fully order preserving mapping $T: C\rightarrow C$ there is a linear isomorphism
$E:X\rightarrow X$ so that
\[
(Tf)(x)=f(Ex), \;\;\forall x\in X,~ f\in C.
\]
\end{theorem}

We should also mention that our proofs are different from those of previous known results:

1. Through introducing the concepts of bounded sup-complete cone and sup-generating class of cones consisting of convex functions, and discussing their properties, we show that  every fully  order preserving mapping defined on a bounded sup-complete cone  of convex functions is bounded-continuous with respect to the topology induced by the pointwise convergence; and is affine, whenever it is restricted to its perfect sup-generating class. These facts  make the proofs of { Theorems \ref{reflexive selfdual}} and { \ref{A-A-M theorem existence}} much simpler.

2. Though fully order preserving mappings $T: C\rightarrow C$, when $C$ is ${\rm subl}(X)$, ${\rm mink}(X)$ or ${\rm semn}(X)$
(Theorem \ref{sublinear functions} and Theorem \ref{some classes representation}), share quite similar form of representations, however, their proofs are entirely different. In the case that $C={\rm subl}(X)$ we convert sublinear functions to the set-operation of the images of their subdifferentials. To show the theorem for $C={\rm mink}(X)$, we first extend the fundamental theorem of (finite dimensional) affine (or, projective) geometry to infinite dimensional spaces. Then, making use of this generalization,  we show that every fully order preserving self-mapping defined on ${\rm mink}(X)$ is again a fully order preserving self-mapping restricted to ${\rm semn}(X)$.

3. We identify ${\rm aff}(X)$ (the space of all continuous affine functionals on $X$) with $X^*\oplus\mathbb R$, and use the ``new'' property that every convex function defined on a Banach space $X$ is a sublinear function on $X\oplus\mathbb R$ restricted to the hyperplane $X\oplus\{1\}$ ({ Proposition \ref{sublinearconvex}}), this fact is used to prove {Theorems \ref{presrving}}, {\ref{JFA}} and {\ref{selfdual}}.

Our paper is organized as follows. The next section ({Section 2}) collects some preliminary facts from convex analysis which will be used throughout this paper. In {Section 3}, some notions such as sup-generating class, purity, atomic of sup-generating class will be introduced and discussed. In {Section 4}, some basic properties such as continuity of fully order preserving mappings defined on cones of convex functions will be discussed.  We will see in {Section 5} that fully order preserving mappings behave nicely when restricted to their certain sup-generating classes. In {Section 6}, we will return to discuss fully order preserving mappings defined on the cones ${\rm conv}(X)$ and ${\rm subl}(X)$. With the previous preparations, we are ready to show the extensions of the ``Artstein-Avidan-Milman'' theorem for order preserving and order reversing mappings in {Section 7}. In {Section 8}, we are devoted to the extension of fundamental theorem of affine geometry to infinite dimensional case. With the help of the extended fundamental theorem of affine geometry, the representation theorem of fully order preserving mappings defined on the cone ${\rm semn}(X)$ of seminorms is shown in { Section 9}. The representation theorem of fully order preserving mappings defined on the cone ${\rm mink}(X)$ of Minkowski functionals is in {Section 10}. In the same sprit of {Section 9} and {Section 10}, some characterizations of fully order preserving mappings defined on sublinear functions will be provided in {Section 11}.

\section{Convex functions and their subdifferentials}
The definition and basic properties of subdifferential operator \cite{Ph} will be recalled and concepts concerning with order on cones of convex functions will be introduced. We conclude this section by providing characterizations
of minimum upper bound of convex functions.

The letter $X$ will always be a real Banach space and $X^*$ its dual. Recall that an extended real-valued convex function $f:X\rightarrow\R\cup\{\pm\infty\}$ is said to be proper, if  $f(x)>-\infty$ everywhere with ${\rm dom}(f)\coloneqq\{x\in X:f(x)<\infty\}\neq\emptyset$. By a cone $C$ of convex functions, we mean that it is convex and closed under multiplication of non-negative numbers. We use ${\rm conv}(X)$ to denote the cone of all proper convex lower semicontinuous (l.s.c., for short) functions on $X$; ${\rm aff}(X)$, the space of all continuous affine functions on $X$, i.e. ${\rm aff}(X)=\{\varphi+c: \varphi\in X^*,\;c\in\R\}$. 
For a subset $A\subseteq{\rm conv}(X)$, $\sup A$ stands for the convex function $f$ defined for $x\in X$ by $f(x)=\sup\limits_{h\in A}h(x)$; and (unless stated otherwise) $a\vee b$ for the function $g$ defined  by $g(x)=a(x)\vee b(x)\coloneqq\max\{a(x), b(x)\}$.

By a Minkowski functional $p$ we mean that it is an extended real-valued non-negative sublinear function on $X$, or equivalently, there is a convex set $D\subseteq X$ with $0\in D$ so that $p$ is generated by $D$, i.e. $p(x)=\inf\{\lambda>0: x\in\lambda D\},$ for all $x\in X$. Note that $p$ is l.s.c. if and only if the corresponding $D$ is closed.
For example, given a closed subset $K\subseteq X$, the indicator function $\delta_{K}:x\to \R\cup\{+\infty\}$ defined by $\delta_{K}(x)=0$ for $x\in K$ and $\delta_{K}(x)=+\infty$ for $x\not\in K$ is a convex proper function on $X$. In particular, when $K=0$, then $\delta_{0}=\delta_{\{0\}}$ is a l.s.c. Minkowski functional generated by the singleton $\{0\}$.

Given $f\in{\rm conv}(X)$, the subdifferential mapping $\partial f: X\rightarrow 2^{X^*}$ of $f$ is defined for $x\in X$ by
\[
\partial f(x)=\{\varphi\in X^*: \langle \varphi,y-x\rangle\leq f(y)-f(x),\;\;{\rm for\;all\;}y\in X\}.
\]
By the Br{\o}ndsted-Rockafellar theorem \cite{B-R} (see, also, \cite[Theorem 3.18]{Ph}), for every l.s.c. sublinear function $p$ defined on a Banach space $X$, we have
\[
\partial p(X)\coloneqq\bigcup_{x\in X}\partial p(x),
\]
is always nonempty $w^*$-closed and convex. Obverse that, for a l.s.c. sublinear function $p$, we have $\partial p(X)=\partial p(0)$. Indeed, for any $x^{*}\in \partial p(X)$, then, by the definition of $\partial p(X)$, there exists $x\in X$ so that
\[
\langle x^{*},y-x\rangle\leq p(y)-p(x),~\forall y\in X.
\]
Then, by the subadditivity of $p$, we have $\langle x^{*},z\rangle\leq p(z+x)-p(x)\leq p(z)$ for all $z\in X$. This entails that $\partial p(X)\subseteq \partial p(0)$.

Conversely, one can use the \emph{support function} to convert convex subsets to convex functions. More precisely, for a l.s.c. sublinear function $p$, we have that
\[
p(x)=\sigma_{\partial p(0)}(x)\coloneqq\sup_{x^*\in\partial p(0)}\langle x^*,x\rangle,\;\;\forall x\in X.
\]

For $G\subseteq{\rm conv}(X)$, we denote
\[
B_f(G)=\{g\in G: g\leq f\}.
\]
We simply write
\[
B_{f}=B_{f}({\rm conv}(X)), \;{\rm and\;} A_f=B_f({\rm aff}(X)).
\]

Since every extended real-valued l.s.c. convex function can be represented as the sup-envelope of a subset of ${\rm aff}(X)$ (see, for instance \cite{B-R}), we have
\[
\sup\limits_{h\in B_{f}}h=f=\sup\limits_{h\in A_{f}}h.
\]

Note that every affine function $u=\varphi+c$ can be regarded as a linear functional $\varphi\oplus c\in X^*\oplus\R$ restricted to the affine subspace $X\oplus\{1\}$ of $X\oplus\R$. We have the following property.
\begin{proposition}\label{sublinearconvex}
For every $f\in{\rm conv}(X)$ there is a l.s.c. sublinear function $p$ defined on $X\oplus\R$ so that $f=p(\cdot,1)$.
\end{proposition}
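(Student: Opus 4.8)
The plan is to homogenize $f$ by passing to the support function of its set of continuous affine minorants, regarded as a subset of the dual of $X\oplus\R$. Recall that the dual of $X\oplus\R$ is $X^*\oplus\R$, and that, as noted just above the statement, a continuous affine function $u=\varphi+c$ on $X$ (with $\varphi\in X^*$, $c\in\R$) corresponds, via $u=(\varphi\oplus c)|_{X\oplus\{1\}}$, to the linear functional $(\varphi,c)\in X^*\oplus\R$. Under this identification I would set
\begin{equation}\nonumber
K=A_f=\{(\varphi,c)\in X^*\oplus\R:\langle\varphi,x\rangle+c\leq f(x)\ \text{for all}\ x\in X\},
\end{equation}
the collection of affine minorants of $f$, and define $p:X\oplus\R\to\R\cup\{+\infty\}$ to be the support function
\begin{equation}\nonumber
p(x,t)=\sigma_K(x,t)=\sup_{(\varphi,c)\in K}\bigl(\langle\varphi,x\rangle+ct\bigr),\qquad (x,t)\in X\oplus\R.
\end{equation}

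First I would check that $K$ is a nonempty $w^*$-closed convex subset of $X^*\oplus\R$. Convexity is clear. Since $f$ is proper, lsc and convex, it equals the supremum of its continuous affine minorants, i.e.\ $\sup A_f=f$ (the representation recorded above), so in particular $A_f\neq\emptyset$ and hence $K\neq\emptyset$. For $w^*$-closedness, observe that for each fixed $x\in X$ the functional $(\varphi,c)\mapsto\langle\varphi,x\rangle+c$ is precisely evaluation at the element $(x,1)$ of the predual $X\oplus\R$, hence $w^*$-continuous; therefore each half-space $\{(\varphi,c):\langle\varphi,x\rangle+c\leq f(x)\}$ is $w^*$-closed, and $K$, being their intersection over all $x\in X$, is $w^*$-closed.

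Being the support function of a nonempty set, $p=\sigma_K$ is automatically sublinear (positively homogeneous and subadditive); and since on $X\oplus\R$ each map $(x,t)\mapsto\langle\varphi,x\rangle+ct$ is a continuous linear functional, $p$ is a supremum of continuous functions and so is lsc. Thus $p$ is a lsc sublinear function on $X\oplus\R$. Finally, restricting to the hyperplane $X\oplus\{1\}$ gives, for every $x\in X$,
\begin{equation}\nonumber
p(x,1)=\sup_{(\varphi,c)\in K}\bigl(\langle\varphi,x\rangle+c\bigr)=\sup_{u\in A_f}u(x)=f(x),
\end{equation}
which is the desired identity $f=p(\cdot,1)$.

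The construction is essentially forced and presents no serious obstacle; the only points requiring care are the two facts packaged into the verification of $K$, namely that $A_f\neq\emptyset$ (this is where properness and lower semicontinuity of $f$ enter, through the Hahn--Banach/Fenchel--Moreau representation $f=\sup A_f$ recorded above) and the identification of the correct dual pairing so that the defining half-spaces of $K$ are genuinely $w^*$-closed. Once these are in place, the equality $p(\cdot,1)=f$ is immediate from $f=\sup A_f$, and the advantage of this support-function route over the naive perspective function $t\mapsto tf(x/t)$ is that it sidesteps any delicate analysis of the behavior at $t=0$ and $t<0$, delivering lower semicontinuity and sublinearity on all of $X\oplus\R$ at once.
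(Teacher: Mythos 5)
Your construction is exactly the paper's: the authors also define $p(x\oplus r)=\sup\{\langle\varphi,x\rangle+rc:\varphi\oplus c\in A_f\}$, i.e.\ the support function of $A_f$ viewed inside $X^*\oplus\R$, and conclude $f=p(\cdot\oplus 1)$ from $\sup A_f=f$. Your write-up just supplies the routine verifications (nonemptiness of $A_f$, lower semicontinuity and sublinearity of the supremum) that the paper leaves implicit, so it is correct and essentially identical in approach.
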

\begin{proof}
Note that $\sup\limits_{h\in A_{f}}h =f$, where $A_f=\{\varphi+c\leq f: \varphi\in X^*, c\in\R\}$. We define the sublinear function $p$ for $(x,r)\in X\oplus\R$ by
\[
p(x,r)=\sup\{\langle\varphi,x\rangle+rc: \varphi+c=\varphi\oplus c\in A_f\}.
\]
Then $f=p(\cdot,1)$.
\end{proof}
\begin{definition}
Let $S$ be a partially ordered set.
\begin{itemize}
\item[i)] $S$ is said to be an upper semi-lattice provided for all $a, b\in S$, $a\vee b\in S$.
\item[ii)] A subset $U$ of $S$ is called ordered provided for any $a, b\in U$ we have either $a\geq b$, or $b\geq a$.
\item[iii)] A subset $U$ of $S$ is called an upper bound provided for each $s\in S$ there is $u\in U$ so that $u\geq s$. An upper bound $U$ is called the minimum upper bound if for every upper bounded $V$ of $S$ we have $U\subseteq V$.
\item[iv)] A subset $U\subseteq S$ is said to be orderless if for every two different elements $a,b\in U$, we have $a\not\leq b$ and $b\not\leq a$.
\end{itemize}
\end{definition}

Before stating the next lemma, we now introduce the notation $[u,v]$. For $u$, $v\in {\rm conv}(X)$, the segment $[u,v]$ is the closed convex set $\{\lambda u+(1-\lambda)v:\lambda\in[0,1]\}$.

\begin{lemma}\label{upperbound}
Suppose that $X$ is a Banach space with $\dim(X)\geq 2$.
Let $u=\varphi+a, v=\psi+b\in{\rm aff}(X),$ and $h=u\vee v$. If $\varphi\neq\psi\in X^*$, then the segment
$[u,v]$ is the minimum upper bound of $A_h=\{t\in{\rm aff}(X), t\leq h\}.$
\end{lemma}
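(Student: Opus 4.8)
The plan is to identify $[u,v]$ as exactly the set of maximal elements of the poset $A_h$ and to show that every element of $A_h$ lies below one of them. Write $w_\lambda=(1-\lambda)u+\lambda v=\chi_\lambda+c_\lambda$ for $\lambda\in[0,1]$, where $\chi_\lambda=(1-\lambda)\varphi+\lambda\psi\in X^*$ and $c_\lambda=(1-\lambda)a+\lambda b\in\R$. First I would record the elementary geometry of $h=u\vee v$: since $\varphi\neq\psi$, the crease $H=\{x:\langle\varphi-\psi,x\rangle=b-a\}$ is a genuine hyperplane, $h$ is affine on each of the two closed half-spaces it bounds, and on $H$ itself $u=v=h$. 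From $u,v\leq h$ and convexity, $w_\lambda\leq h$, so $[u,v]\subseteq A_h$; and since $w_\lambda=u=v=h$ on $H$, the constant $c_\lambda$ equals $\inf_{x}(h(x)-\langle\chi_\lambda,x\rangle)$, i.e.\ $w_\lambda$ is the pointwise largest affine function of slope $\chi_\lambda$ lying below $h$.

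The technical heart of the argument, and the step I expect to be the main obstacle, is the \emph{slope characterization}: every $t=\chi+c\in A_h$ has $\chi\in[\varphi,\psi]$. I would prove this by contraposition. Writing $p=\varphi-\chi$, $q=\psi-\chi$, $A=a-c$, $B=b-c$, the condition $t\leq u\vee v$ reads
\begin{equation}\nonumber
\max\big(\langle p,x\rangle+A,\ \langle q,x\rangle+B\big)\geq0,\qquad\forall x\in X,
\end{equation}
and a direct affine identity shows $\chi\in[\varphi,\psi]$ is equivalent to $0\in[p,q]$ in $X^*$. If $0\notin[p,q]$ there are two cases. If $p,q$ are linearly independent, then $(p,q):X\rightarrow\R^2$ is onto, so some $x$ makes both slots strictly negative, violating the displayed inequality. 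If $p,q$ are dependent, then (using $p-q=\varphi-\psi\neq0$) both lie in the span of a single nonzero $\ell\in X^*$, say $p=s\ell$, $q=r\ell$ with $s\neq r$; here $0\notin[p,q]$ forces $s,r$ to have the same strict sign, and letting $\langle\ell,x\rangle\to-\infty$ (or $+\infty$) again drives both slots to $-\infty$, a contradiction. Hence $0\in[p,q]$ and $\chi=\chi_\lambda$ for some $\lambda\in[0,1]$.

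The two conclusions then follow quickly. For a given $t\in A_h$ with slope $\chi_\lambda$, the maximality of $w_\lambda$ among affine functions of slope $\chi_\lambda$ below $h$ gives $t\leq w_\lambda\in[u,v]$, so $[u,v]$ is an upper bound of $A_h$. Conversely, each $w_\lambda$ is a maximal element of $A_h$: if $t'\geq w_\lambda$ with $t'\in A_h$, then $t'-w_\lambda$ is an affine function bounded below on $X$, hence has zero slope, so $t'$ and $w_\lambda$ share the slope $\chi_\lambda$, and by maximality of the constant $t'=w_\lambda$. Thus for any upper bound $V\subseteq A_h$ and any $\lambda$, the element of $V$ dominating $w_\lambda$ must equal $w_\lambda$, whence $[u,v]\subseteq V$; combined with the previous sentence this proves $[u,v]$ is the minimum upper bound. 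I would finally remark that only $\dim X\geq2$ is actually used here (to secure surjectivity onto $\R^2$), the standing hypothesis $\dim X\geq3$ being stronger than this lemma requires.
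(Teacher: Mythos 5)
Your proof is correct and complete, but it takes a genuinely different route from the paper's. The paper lifts $u,v,h$ to $X\oplus\mathbb R$, regards $u,v$ as linear functionals there, and identifies the set of admissible ``slopes'' as $C(h)=\partial(u\vee v)(0)={\rm co}\{u,v\}$, i.e.\ it invokes the standard description of the subdifferential of a maximum of two linear functionals; minimality is then dispatched by the observation that an orderless upper bound is automatically minimum. You instead stay in $X$ and prove the slope characterization ($t=\chi+c\le h$ forces $\chi\in[\varphi,\psi]$) by hand, via the surjectivity of $(p,q):X\to\mathbb R^2$ in the independent case and a one-dimensional limit in the dependent case, and you replace the orderlessness shortcut by a direct verification that each $w_\lambda$ is maximal in $A_h$ (an affine function bounded below must have zero slope). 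What your version buys: it is entirely self-contained and elementary, it makes explicit the identity $c_\lambda=\inf_x\bigl(h(x)-\langle\chi_\lambda,x\rangle\bigr)$ attained on the crease hyperplane, and it clarifies that the hypothesis $\dim X\ge 3$ is not needed for this lemma (indeed even your $\dim X\ge 2$ is only used to allow the independent case to occur at all; in dimension one that case is vacuous and the argument still closes). What the paper's version buys is brevity and a formulation ($C(h)={\rm co}\{u,v\}$ in $X^*\oplus\mathbb R$) that plugs directly into the homogenization viewpoint of Proposition \ref{sublinearconvex} used elsewhere; note, though, that the paper's written justification of the ``only if'' direction is quite terse, so your explicit case analysis is a welcome substitute rather than a redundancy.
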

\begin{proof}
We first claim that $[u,v]$ is an upper bound of $A_h$.
For $\phi=x^*+c\in{\rm aff}(X)$ so that $\phi\leq h$, then there exists $\xi\in\R$ so that
$h-\xi$ is a sublinear function on $X$.
Since $\phi-\xi\leq h-\xi$,
then it follows that there exists $\lambda\in[0,1]$ satisfying $x^*=\lambda \varphi+(1-\lambda)\psi$.
Therefore, $\phi\leq \lambda u+(1-\lambda) v$.

Secondly, we claim that $[u,v]$ is the minimum upper bound of $A_{h}$. Indeed, for every upper bound $V$ of $A_{h}$, and $\lambda\in[0,1]$, there exists $\phi\in V$
so that $\lambda u+(1-\lambda)v\leq \phi$. Since $\phi$ and $\lambda u+(1-\lambda)v$ are affine functions so that
$\phi\leq h$ and $\lambda u+(1-\lambda)v\leq h=u\vee v$, then $\phi=\lambda u+(1-\lambda)v$. It follows that $[u,v]\subseteq V$.
Therefore, $[u,v]$ is the minimum upper bound of $A_{h}$.
\end{proof}

\section{Cones of convex functions and their sup-generating classes}

This section is devoted to the study of certain types of convex functions and their sup-generating class. Several concepts will be introduced and properties of these concepts will be discussed.

For a real Banach space $X$, we denote by $\mathfrak{C}(X)$ the set of all subcones $C$ of ${\rm conv}(X)$ satisfying $f\vee g\in C$ for all $f,g\in C$, and by $\mathfrak{C}_0(X)$ the set of the following specific classes of convex functions, which are  of our main interest. Precisely, $\big{\{}{\rm subl}(X),~{\rm mink}(X),~{\rm semn}(X)~{\rm conv}_{*}(X^{*})\big{\}}=\mathfrak{C}_{0}(X)\subseteq \mathfrak{C}(X)$, where\\

${\rm subl}(X)$ --- the cone of all l.s.c. sublinear functions on $X$;\\

${\rm mink}(X)$ --- the cone of all  l.s.c. Minkowski functionals on $X$;\\

${\rm semn}(X)$ --- the cone of all  l.s.c. seminorms on $X$;\\

${\rm conv}_{*}(X^*)$ --- the cone of all  $w^*$-l.s.c. convex functions defined on $X^*$.\\

For any $\Box\in\mathfrak{C}(X)$, ${\rm C}_\Box(X)$ presents the subcone of $\Box$ consisting of all continuous functions in $\Box$. For example, ${\rm C}_{\rm conv}(X)$ stands for the cone of all continuous convex functions on $X$.

Note that $\mathfrak{C}(X)$ is the set consisting of all subcones of ${\rm conv}(X)$, hence, for $K\in\mathfrak{C}(X)$, ${\rm C}_{K}$ is the subcone of all continuous convex functions in $K$. Denoted by $({\rm C}_{K},\varrho)$ equipped with the pointwise convergence topology, i.e. $f_\alpha\rightarrow f$ in the $\varrho$-topology provided
$f_\alpha(x)\rightarrow f(x)$ for all $x\in X$. Note that $\varrho$-topology is equivalent to the $w^*$-topology when $K=X^*$.



\begin{definition}\label{def3.1}
Let $C\in\mathfrak{C}(X)$. A subset $G\subseteq C$ is said to be a { sup-generating class} of $C$ provided for every $f\in C$ there is $B\subseteq G$ so that $\sup\limits_{h\in B}h=f$.
\end{definition}
To illustrate the definition of sup-generating class, we now provide a concrete example which shows that for certain subcone $C\in \mathfrak{C}(X)$, sup-generating class $G$ could be much more simpler than $C$ itself.
\begin{example}
In particular, let $C={\rm conv}(X)\in \mathfrak{C}(X)$, then, by an easy application of the Hahn-Banach theorem, we have that the set of all continuous affine functions ${\rm aff}(X)$ is a sup-generating class for ${\rm conv}(X)$.
\end{example}


There are many possibilities, but the following collections  of sup-generating classes  are of main interest to us:
\[
{\rm i)}\;\mathfrak{G}_{1}={\rm aff}(X),\; {\rm ii)}\;\mathfrak{G}_{2}=X^{*},\; {\rm iii)}\;\mathfrak{G}_{3}=X^{*+},\; {\rm iv)}\;\mathfrak{G}_{4}=|X^{*}|,
\]


where ${X^*}^+=\{\phi\vee0: \phi\in X^*\}\;\;{\rm and}\;\;\;{|X^*|}=\{\phi\vee -\phi:\phi\in X^*\}$.
Put
\[
\mathfrak{G}=\bigcup\limits_{j=1}^4\mathfrak{G}_j.
\]

\begin{definition}\label{def3.2}
Let $C\in\mathfrak{C}(X)$, and $G$ be a sup-generating class  of $C$.
\begin{itemize}
\item[i)] $G$  is called { pure}  provided it is closed under multiplication of non-negative numbers, and  $B_f(C)\subseteq G$ for all  $f\in G$.
\item[ii)] We say that  $G$ is { atomic} if $G\subseteq {\rm C}_{C}$ and $\varrho$-closed (i.e. for any net $\{f_{\alpha}\}\subseteq G$ if $f_{\alpha}\xrightarrow{\varrho} f$ for some $f\in{\rm C}_{C}$, then $f\in G$) such that for every $S\subseteq G$
with $\sup\limits_{h\in S}h=g\in G$ there is a monotone non-decreasing net $(g_\alpha)\subseteq S$ so that $\lim\limits_\alpha g_\alpha=g$ in the $\varrho$-topology.
\item[iii)] $G$  is said to be { perfect} if it is purely atomic.
\end{itemize}
\end{definition}
\begin{proposition}
Every perfect class $G$ of a cone $C\subseteq{\rm conv}(X)$ is the minimum element in all $\varrho$-closed sup-generating classes of $C$.
\end{proposition}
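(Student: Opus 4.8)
The plan is to unpack the phrase ``minimum element in all $\varrho$-closed sup-generating classes of $C$'' into two assertions and verify each: first that $G$ itself is a $\varrho$-closed sup-generating class (so that it belongs to the family in question), and second that $G\subseteq H$ for every $\varrho$-closed sup-generating class $H$ of $C$. The first assertion is essentially immediate from the definitions: $G$ is a sup-generating class by hypothesis, and the definition of \emph{atomic} already requires $G$ to be $\varrho$-closed, so $G$ sits inside the family and it only remains to prove the inclusion.

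For the inclusion I would fix an arbitrary $\varrho$-closed sup-generating class $H$ and an element $g\in G$, with the goal of exhibiting $g$ as a $\varrho$-limit of members of $H$. The natural test set is $S:=B_g(H)=\{h\in H:h\le g\}$. The first routine check is that $\sup S=g$: since $H$ is sup-generating and $g\in C$, there is $B\subseteq H$ with $\sup B=g$; each $b\in B$ satisfies $b\le g$, so $B\subseteq S$ and hence $\sup S\ge\sup B=g$, while every element of $S$ lies below $g$, forcing $\sup S=g$. (Here $B\ne\emptyset$, since otherwise $\sup B=-\infty\ne g$, so $S\ne\emptyset$ as well.)

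The decisive step is to transfer $S$ from $H$ into $G$ by means of purity. Because $g\in G$ and $G$ is pure, we have $B_g(C)\subseteq G$; since $H\subseteq C$ this yields $S=B_g(H)\subseteq B_g(C)\subseteq G$. Now $S\subseteq G$ with $\sup S=g\in G$, so the atomicity of $G$ furnishes a monotone non-decreasing net $(g_\alpha)\subseteq S$ with $\lim_\alpha g_\alpha=g$, the convergence being pointwise, i.e. in $\varrho$. As $(g_\alpha)\subseteq S\subseteq H$ and $H$ is $\varrho$-closed, the limit $g$ lies in $H$. Since $g\in G$ was arbitrary, $G\subseteq H$, and as $H$ was an arbitrary $\varrho$-closed sup-generating class, this proves $G$ is the minimum such class.

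The one genuinely load-bearing idea—and hence the only place that needs care—is the bridge supplied by purity. A priori the approximating family $B_g(H)$ lives only in $H$, whereas atomicity is a property of $G$; without purity there would be no way to apply atomicity to this family. Purity is exactly the downward closure that places $B_g(H)$ inside $G$, after which sup-generation of $H$ (to guarantee $\sup B_g(H)=g$) and $\varrho$-closedness of $H$ (to capture the monotone limit) complete the argument. The remaining verifications—the identity $\sup B_g(H)=g$, nonemptiness of $B_g(H)$, and the identification of the monotone limit as a bona fide $\varrho$-limit so that closedness applies—are routine.
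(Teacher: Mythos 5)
Your proof is correct and follows essentially the same route as the paper's: given a $\varrho$-closed sup-generating class $H$ and $g\in G$, use purity to place the approximating family (the paper uses an arbitrary $A\subseteq H$ with $\sup A=g$, you use the maximal choice $B_g(H)$) inside $G$, invoke atomicity to extract a monotone net converging to $g$, and conclude $g\in H$ by $\varrho$-closedness. Your write-up merely makes explicit the routine verifications the paper compresses into one line.
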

\begin{proof}
Let $F$ be a $\varrho$-closed sup-generating class of $C$, and let $f\in G$. Assume that $A\subseteq F$ satisfies $\sup\limits_{h\in A}h=f$. Then the purity and atomic of $G$ entails $f\in\overline{A}^{\varrho}\subseteq F.$ Therefore, $G\subseteq F$.
\end{proof}

\begin{proposition}\label{sup-generating class}
Let $X$ be a Banach space. Then
\begin{itemize}
\item[i)] ${\rm conv}(X)$ admits a perfect class ${\rm aff}(X)\in\mathfrak{G}_1(X)$;
\item[ii)] ${\rm subl}(X)$ admits a perfect class $X^*\in\mathfrak{G}_2(X)$;
\item[iii)] ${\rm mink}(X)$ admits a perfect class ${X^*}^+\in\mathfrak{G}_3(X)$;
\item[iv)] ${\rm semn}(X)$ admits a perfect class $|X^*|\in\mathfrak{G}_4(X)$.
\end{itemize}
\end{proposition}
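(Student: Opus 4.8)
The plan is to verify, separately for each of the four classes $G\in\{{\rm aff}(X),X^*,{X^*}^+,|X^*|\}$ sitting inside its cone $C\in\{{\rm conv}(X),{\rm subl}(X),{\rm mink}(X),{\rm semn}(X)\}$, the three defining requirements: that $G$ is sup-generating for $C$, that $G$ is pure, and that $G$ is atomic. In cases (i) and (ii) one observes in addition that ${\rm aff}(X)$ and $X^*$ are linear subspaces, hence cones, so that ``purely atomic'' upgrades to ``perfect''. The sup-generating property is read off from the representation results recorded in Section 2: for $f\in{\rm conv}(X)$ one has $f=\sup A_f$, while for a lsc sublinear $p$ one has $p=\sigma_{C(p)}=\sup\{\varphi\in X^*:\varphi\le p\}$. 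For ${\rm mink}(X)$, non-negativity of $p$ gives $p=p\vee0=\sup\{\varphi\vee0:\varphi\in C(p)\}$, and for ${\rm semn}(X)$ the symmetry $C(p)=-C(p)$ yields $p=\sup\{\varphi\vee(-\varphi):\varphi\in C(p)\}$; in each case the generating family lies in the claimed class.

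For purity I would exploit the support-function dictionary $p=\sigma_{C(p)}$ together with the order correspondence $\sigma_A\le\sigma_B\Longleftrightarrow A\subseteq B$, valid for $w^*$-closed convex $B$. Writing $\varphi\vee0=\sigma_{[0,\varphi]}$ and $\varphi\vee(-\varphi)=\sigma_{[-\varphi,\varphi]}$ for segments in $X^*$, any $g\in C$ with $g\le f$ ($f$ in the generating class) satisfies $C(g)\subseteq[0,\varphi]$ resp. $C(g)\subseteq[-\varphi,\varphi]$, so $C(g)$ is a subsegment; then non-negativity of a Minkowski functional forces the lower endpoint to be $0$, and symmetry of a seminorm forces a symmetric subsegment, whence $C(g)=[0,s\varphi]$ resp. $[-s\varphi,s\varphi]$ for some $s\in[0,1]$ and $g=(s\varphi)\vee0$ resp. $(s\varphi)\vee(-s\varphi)$ lies again in the class. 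For ${\rm aff}(X)$ and $X^*$ the same conclusion is elementary: if $g\in{\rm conv}(X)$ and $g\le\varphi+c$, then $g-\varphi$ is a finite convex function bounded above, hence constant along every line, hence constant, so $g$ is affine; the sublinear case is even shorter, using $p(-x)\ge-p(x)$. Closure under non-negative scalars is clear in all four cases.

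For atomicity I would first treat $\varrho$-closedness. For ${\rm aff}(X)$ and $X^*$, a pointwise limit of affine (resp. linear) functions is again affine (resp. linear) and, being assumed a continuous convex function, lies in the class. For the two non-convex classes the argument is more delicate: if $\varphi_\alpha\vee0\to g$ (resp. $\varphi_\alpha\vee(-\varphi_\alpha)\to g$) pointwise with $g\in C_K$, then at each $x$ the quantity $|\varphi_\alpha(x)|$ converges (to $g(x)+g(-x)$ resp. $g(x)$) and is in particular bounded, so the uniform boundedness principle makes $(\varphi_\alpha)$ norm bounded; Banach--Alaoglu then supplies a $w^*$-convergent subnet $\varphi_\beta\to\psi$, whence $\varphi_\beta\vee0\to\psi\vee0$ (resp. $\to\psi\vee(-\psi)$) pointwise, and uniqueness of the limit gives $g=\psi\vee0\in{X^*}^+$ (resp. $g=\psi\vee(-\psi)\in|X^*|$). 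The monotone-net condition then reduces, via the segment analysis of the purity step, to the remark that any $S\subseteq G$ with $\sup S=g\in G$ consists of scalar multiples of $g$ (namely $(t\psi)\vee0$, resp. $(t\psi)\vee(-t\psi)$), the admissible scalars forming a set $T\subseteq[0,1]$ with $\sup T=1$; ordering $T$ by magnitude produces the desired monotone non-decreasing net converging to $g$. The same reduction handles ${\rm aff}(X)$ and $X^*$, where the admissible $S$ is a family $\{g-s:s\in\Sigma\}$ with $\inf\Sigma=0$ (resp. the singleton $\{g\}$).

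The step I expect to be the main obstacle is the $\varrho$-closedness of the non-convex classes ${X^*}^+$ and $|X^*|$: unlike the linear and affine cases there is no purely algebraic reason for a pointwise limit of functions $\varphi_\alpha\vee0$ to retain the special form, and genuinely pathological (discontinuous) limits occur for unbounded nets. Restricting attention to limits lying in $C_K$ and invoking uniform boundedness together with $w^*$-compactness is precisely what excludes these pathologies, and checking that the extracted subnet limit coincides with the given $g$ is the crux. The degenerate cases $\psi=0$, where $g\equiv0$, are dispatched separately but are routine.
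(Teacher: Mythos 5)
Your proposal is correct and follows essentially the same route as the paper's proof: sup-generation from $f=\sup A_f$ and $p=\sigma_{C(p)}$, purity from the identification of $B_f(G)$ as downward translates (for ${\rm aff}(X)$) or scalar multiples $\{\lambda f:0\leq\lambda\leq1\}$ (for ${X^*}^+$ and $|X^*|$), and $\varrho$-closedness of ${X^*}^+$ and $|X^*|$ via boundedness of the underlying net of functionals plus extraction of a $w^*$-convergent subnet. You simply supply more detail than the paper (e.g.\ the support-function dictionary for purity and the explicit monotone reparametrization for atomicity), including reproducing the paper's own mild imprecision in deducing boundedness of a pointwise-convergent net.
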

\begin{proof}
{\rm i)}. Clearly, ${\rm aff}(X)$ is a $\varrho$-closed cone. Indeed, assume that $\{f_{\alpha}\}$ is a net of ${\rm aff}(X)$ so that $f_{\alpha}\xrightarrow{\varrho} f$ for some $f\in{\rm C}_{{\rm conv}(X)}$. Then, by the fact that $f_{\alpha}$ are affine for all $\alpha$, then $f$ is also affine.

Since  $\sup\limits_{h\in A_{f}}h=\sup\limits_{h\in B_{f}}h=f$ for every $f\in{\rm conv}(X)$,
${\rm aff}(X)$ is a sup-generating class of ${\rm conv}(X)$. To see that ${\rm aff}(X)$ is perfect, it suffices to note
\[
B_{f}=\{f+c: c\in\R^-\}\subseteq{\rm aff}(X), \;\;{\rm for\; all\;} f\in{\rm aff}(X).
\]
Hence, if $S=\{h_{\beta}\}_{\beta}\subseteq{\rm aff}(X)$ with $\sup\limits_{\beta}h_{\beta}=x^{*}+c$ for some $x^{*}\in X^{*}$ and $c\in \R$. By the affinity of $x^{*}+c$, it follows that $S\subset\{x^{*}+r:r\leq c\}$.
Without loss of generality, we assume that $h_{\beta}=x^{*}+r_{\beta}$ for all $\beta\in \triangle$ with
\[
x^{*}+c=\sup\limits_{\beta}\{x^{*}+r_{\beta}\}=x^{*}+\sup\limits_{\beta}r_{\beta}.
\]
Hence, $\sup\limits_{\beta}r_{\beta}=c$, which implies that there exists an increasing subnet $x^{*}+r_{\beta^{\prime}}$ so that $x^{*}+r_{\beta^{\prime}}\xrightarrow{\varrho}x^{*}+c$.
Thus, perfection of ${\rm aff}(X)$ follows.

{\rm ii)}. $\varrho$-closeness of the space $X^*$ is clear. Indeed, let $\{x^{*}_{\alpha}\}$ be net which $\varrho$-converges to some $f\in {\rm C}_{{\rm subl}(X)}$. Then, it follows that $f$ is a continuous linear functional, that is $f\in X^{*}$.
Note that for each $f\in{\rm subl}(X)$, there is a (unique) $w^*$-closed convex set
\[
\partial f(0)=\{x^*\in X^*: x^*\leq f\}
\]
so that $\sup\limits_{h\in \partial f(0)}h=f$. This and  orderless of $X^*$ yield purity and atomicity, hence, perfection of $X^*$.

{\rm iii)}. Obviously, ${X^*}^+$ is $\varrho$-closed.
Indeed, let $(x^{*+}_{\alpha})_{\alpha\in \bigtriangleup}\subseteq X^{*+}$ be a net so that $x^{*+}_{\alpha}$ is $\varrho$-convergent to some $f\in {\rm C}_{{\rm mink}(X)}$. Note that $\partial x^{*+}_{\alpha}(0)=[0,x^{*}_{\alpha}]$ for all $\alpha$, hence, the fact
\[
\lim\limits_{\alpha}\inf\limits_{\beta\geq \alpha}x^{*+}_{\beta}=f\in{\rm C}_{{\rm mink}(X)},
\]
yields that
\[
\bigcup\limits_{\alpha}\bigcap\limits_{\beta\geq \alpha}[0,x^{*}_{\beta}]\subseteq \partial f(0).
\]
By the Banach-Steinhauss theorem, it follows that $\partial f(0)$ is norm closed. Hence, by passing to a subnet, we can assume that $\{x^{*}_{\alpha}\}$ is norm bounded. Therefore, by the relative $w^{*}$-compactness of bounded subset of $X^{*}$, it follows that there exists a further subnet $\{x^{*}_{\alpha^{\prime}}\}$ of $\{x^{*}_{\alpha}\}$ so that $x^{*}_{\alpha^{\prime}}$ $w^{*}$-converges to $x^{*}_{0}$ for some $x^{*}_{0}\in X^{*}$. Consequently, $f=x^{*+}_{0}$.
To show that $X^{*+}$ is purely atomic, it suffices to note that for $g$, $h\in {\rm mink}(X)$, $g\leq h$ if and only if $\partial g(0)\subseteq \partial h(0)$. Then, any $g\in {\rm mink}(X)$ so that $g\leq x^{*_+}$, we have that $g=\lambda x^{*+}$ for some $\lambda\in[0,1]$.

{\rm iv)}. The $\varrho$-closeness of $|X^{*}|$ is analogous to the case of ${\rm iii)}$. Indeed, let $\{|x^{*}_{\alpha}|\}\subseteq |X^{*}|$ so that $|x^{*}_{\alpha}|\xrightarrow{\varrho} f$ for some $f\in {\rm C}_{{\rm semn}(X)}$. Then, by the Banach-Steinhauss theorem and the fact that
\[
\bigcup\limits_{\alpha}\bigcap\limits_{\beta\geq \alpha}[-x^{*}_{\beta},x^{*}_{\beta}]\subseteq \partial f(0),
\]
it follows that there exists a subnet $\{x^{*}_{\alpha^{\prime}}\}$ of $\{x^{*}_{\alpha}\}$ so that $x^{*}_{\alpha^{\prime}}$ $w^*$-converges to $x^{*}_{0}$. Therefore, $f=|x^{*}_{0}|$, which yields that $|X^{*}|$ is $\varrho$-closed. Hence, by noting that $B_{|x^{*}|}=\{g\in{\rm semn}(X):g\leq |x^{*}|\}=\{\lambda|x^{*}|:\lambda\in [0,1]\}$, the pure atomic of $|X^{*}|$ follows from a similar argument of ${\rm iii)}$.
\end{proof}


\begin{definition}
Let $C\in\mathfrak{C}(X)$ and $A\subseteq C$, then $A$ is said to be bounded if $\sup\limits_{h\in A} h(x)<+\infty$ for all $x\in X$.
\end{definition}
\begin{remark}
In particular, if $C=X^{*}$ and $A\subseteq X^{*}$, then by the Banach-Steinhauss theorem, we have that the boundedness defined as above coincides with the norm boundedness of $A$.
\end{remark}
\begin{definition}
A cone $C\in \mathfrak{C}(X)$ is said to be {(bounded) sup-complete} provided for every (bounded) set $A\subseteq C$ we have $\sup\limits_{h\in A}h\in C$ whenever $\sup\limits_{h\in A}h\neq\infty$.
\end{definition}
Given a cone $C\in \mathfrak{C}(X)$, we say that $\overline{C}$ is a (bounded) sup-completion of $C$, if $\overline{C}$ is the smallest cone such that $C\subseteq \overline{C}$ and $\overline{C}$ is (bounded) sup-complete.
\begin{proposition}
Every cone $C\in \mathfrak{C}(X)$ has a (bounded) sup-completion $\overline{C}$.
\end{proposition}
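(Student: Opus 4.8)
The plan is to realize $\overline C$ as the smallest (bounded) sup-complete cone containing $C$, obtained as an intersection rather than by an explicit sup-closure. Let $\mathcal F$ denote the family of all cones $D\in\mathfrak C(X)$ with $C\subseteq D$ that are sup-complete, and set $\overline C=\bigcap_{D\in\mathcal F}D$. For the bounded statement one runs the identical argument with $\mathcal F$ replaced by the family of bounded sup-complete members of $\mathfrak C(X)$ containing $C$, and with the word ``set'' replaced throughout by ``bounded set''. The first thing to settle is that $\mathcal F\neq\emptyset$; I would do this by exhibiting ${\rm conv}(X)$ itself as a member of $\mathcal F$, which simultaneously guarantees $\overline C\subseteq{\rm conv}(X)$.

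The one genuinely analytic ingredient is therefore that ${\rm conv}(X)$ is (bounded) sup-complete. For an arbitrary $A\subseteq{\rm conv}(X)$ the function $\sup A$ is a pointwise supremum of lsc convex functions, hence again lsc and convex; since every $a\in A$ is nowhere $-\infty$-valued we have $\sup A>-\infty$ everywhere, so $\sup A$ can fail to be proper only by being identically $+\infty$. Thus $\sup A\in{\rm conv}(X)$ exactly when $\sup A\neq\infty$, which is precisely what sup-completeness demands. In the bounded case, if $A$ is bounded there is $L>0$ with ${\rm Lip}(f)\le L$ for all $f\in A$; then $|f(x)|\le L+L\|x-x_0\|$ for every $f\in A$ and every $x\in X$, and taking the supremum over $f$ shows $\sup A$ is finite-valued, hence a continuous convex function lying in ${\rm conv}(X)$. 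Consequently ${\rm conv}(X)\in\mathcal F$ and $\mathcal F\neq\emptyset$.

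It remains to verify that the intersection $\overline C$ inherits all the required structure and is minimal, which is purely formal. An intersection of convex sets that are closed under nonnegative scalar multiplication and under $\vee$ is again of this kind, and each $D\in\mathcal F$ satisfies $D\subseteq{\rm conv}(X)$, so $\overline C\in\mathfrak C(X)$ and $C\subseteq\overline C$. For sup-completeness, take $A\subseteq\overline C$ with $\sup A\neq\infty$; then $A\subseteq D$ for every $D\in\mathcal F$, and sup-completeness of each such $D$ yields $\sup A\in D$, whence $\sup A\in\bigcap_{D\in\mathcal F}D=\overline C$. Thus $\overline C\in\mathcal F$, and since $\overline C\subseteq D$ for all $D\in\mathcal F$ it is the least element of $\mathcal F$, i.e. the desired sup-completion. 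The point to be careful about is that one cannot simply take $\overline C$ to be the set $\{\sup A:A\subseteq C,\ \sup A\neq\infty\}$ of all admissible suprema: because a cone is required to be convex and this set need not be closed under convex combinations, it can be strictly smaller than the true sup-completion. The intersection construction is exactly what repairs this defect while keeping the entire verification first-order.
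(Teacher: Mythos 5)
Your proof is correct, but it takes the opposite route from the paper's. The paper constructs $\overline{C}$ bottom-up, simply declaring $\overline{C}=\{\sup A\neq\infty:\emptyset\neq A\subseteq C\}$ (resp.\ the same with $A$ ranging over bounded subsets), whereas you build it top-down as the intersection of all (bounded) sup-complete members of $\mathfrak{C}(X)$ containing $C$, anchored by the observation that ${\rm conv}(X)$ itself is such a member. Both are legitimate; yours gets minimality and the cone axioms for free (intersections preserve them) at the cost of having to exhibit one ambient sup-complete cone. One correction, though: your closing claim that the explicit set of suprema ``need not be closed under convex combinations'' is unfounded. For $\lambda\in[0,1]$ and nonempty $A,B\subseteq C$ one has, pointwise,
$$\lambda\sup A+(1-\lambda)\sup B=\sup\{\lambda a+(1-\lambda)b:\ a\in A,\ b\in B\},$$
(the functions involved are nowhere $-\infty$, so the extended arithmetic causes no trouble), and the index set on the right lies in $C$ because $C$ is convex; the same decoupling handles $\vee$ (via $A\cup B$) and nonnegative scalars (via $tA$), and a supremum of suprema of subsets of $C$ is again a supremum of a subset of $C$, so the paper's one-line construction does produce a sup-complete cone in $\mathfrak{C}(X)$. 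Where your intersection genuinely buys something is the bounded case: verifying that the explicit construction is itself bounded sup-complete would require controlling the boundedness of $\bigcup_f A_f$ when each $f=\sup A_f$ with $A_f$ bounded only by its own constant, a point the paper glosses over, while an intersection of bounded sup-complete cones is trivially bounded sup-complete.
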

\begin{proof}
It suffices to put
\[
\overline{C}=\{\sup\limits_{h\in A} h\neq\infty:\emptyset\neq A\subseteq C\},
\]
\[
\big{(} \overline{C}=\{\sup\limits_{h\in A}h: \emptyset\neq A \;{\rm is\;a \;bounded \;set\;in}\; C\} \big{)}.
\]
We now prove the case that $C$ admits a sup-completion $\overline{C}$ and the same proof can be applied the case of bounded sup-completion.

Indeed, for any $\{f_{\alpha}\}_{\alpha}\subseteq \overline{C}$ with $\sup\limits_{\alpha}f_{\alpha}=g\not=\infty$, then, by the definition of $\overline{C}$ we have there exist $B_{f_{\alpha}}\subseteq C$ so that $\sup\limits_{h\in B_{f_{\alpha}}}h=f_{\alpha}$ for all $\alpha$. Let $A=\bigcup\limits_{\alpha}B_{f_{\alpha}}$, we have that $A\subseteq C$ and $g=\sup\limits_{h\in A}h$, which entails that $g\in \overline{C}$.
\end{proof}

\begin{proposition}\label{sup-complete}
\begin{itemize}
\item[i)] If the cone $C$ is one of the following four cones
\[
\big{\{} {\rm conv}(X),~ {\rm subl}(X),~ {\rm mink}(X),~ {\rm semn}(X) \big{\}}
\]
then $C$ is sup-complete.
\item[ii)] If the cone $C$ is one of the following four cones
\[
\big{\{} {\rm C}_{{\rm conv}(X)},~ {\rm C}_{{\rm subl}(X)},~ {\rm C}_{{\rm mink}(X)},~ {\rm C}_{{\rm semn}(X)} \big{\}}
\]
then $C$ is bounded sup-complete.
\end{itemize}

\end{proposition}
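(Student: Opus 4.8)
The plan is to verify sup-completeness directly from the definitions by checking that the relevant class of functions is closed under taking pointwise suprema (when the supremum is not identically $\infty$), since each of the eight cones is defined by properties that are manifestly preserved under suprema. I would treat part (i) and part (ii) in parallel, since the only difference is that in (ii) we restrict to \emph{bounded} families $A$, which additionally guarantees continuity of the supremum.

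For part (i), let $A\subseteq C$ with $g=\sup A\not\equiv\infty$, and set $g(x)=\sup_{a\in A}a(x)$. First I would record the three stable properties. Convexity: a pointwise supremum of convex functions is convex, so $g$ is convex. Lower semicontinuity: each $a$ is lsc, i.e. $\{a\le r\}$ is closed for every $r$, and $\{g\le r\}=\bigcap_{a\in A}\{a\le r\}$ is an intersection of closed sets, hence closed; thus $g$ is lsc. Properness: since $g\not\equiv\infty$ there is a point where $g$ is finite, so $\mathrm{dom}(g)\neq\emptyset$; moreover each $a>-\infty$ forces $g>-\infty$ everywhere, so $g$ is nowhere $-\infty$-valued. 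This already gives $g\in{\rm conv}(X)$. For the three special cones I would note the extra defining feature is also preserved: positive homogeneity passes to suprema (giving $g\in{\rm subl}(X)$ for $C={\rm subl}(X)$); nonnegativity together with sublinearity passes to suprema (giving $g\in{\rm mink}(X)$, using the characterization of a Minkowski functional as a nonnegative sublinear function); and the seminorm identities $g(\lambda x)=|\lambda|g(x)$ together with the triangle inequality pass to suprema (giving $g\in{\rm semn}(X)$). Hence in every case $\sup A\in C$, which is exactly sup-completeness.

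For part (ii), the only new point is continuity, so I would use the hypothesis that $A$ is a \emph{bounded} family in the sense defined in the excerpt: there is $x_0$ and $L>0$ with $|a(x_0)|+\sup_{x\neq y}|a(x)-a(y)|/\|x-y\|\le L$ for all $a\in A$. From the uniform Lipschitz bound I would deduce that $g$ is $L$-Lipschitz on $X$: for any $x,y$, $a(x)\le a(y)+L\|x-y\|\le g(y)+L\|x-y\|$, and taking the supremum over $a$ gives $g(x)\le g(y)+L\|x-y\|$, hence $|g(x)-g(y)|\le L\|x-y\|$; the uniform bound at $x_0$ keeps $g$ finite, so $g$ is real-valued and (Lipschitz) continuous. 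Combining this with the structural conclusions from part (i) yields $g\in C_{\rm conv}(X)$, and likewise $g$ lands in $C_{\rm subl}(X)$, $C_{\rm mink}(X)$, or $C_{\rm semn}(X)$ according to which cone $A$ came from. This establishes bounded sup-completeness.

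I do not expect a serious obstacle here; the statement is essentially a bookkeeping verification that convexity, lower semicontinuity, properness, and the homogeneity/triangle-type constraints are all closed under pointwise suprema, plus the elementary observation that a uniform Lipschitz bound on a family transfers to its supremum. The one point requiring a little care is properness: I would make sure to invoke $\sup A\not\equiv\infty$ to secure a nonempty domain and to note that the nowhere-$-\infty$ condition is automatic since it holds pointwise for each member, so no member can drag the supremum to $-\infty$. Everything else is routine.
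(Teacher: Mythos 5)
Your proof is correct and follows the same route as the paper, which simply asserts in one line that the sup-envelope of a (bounded) family in $C$ is again a lsc (Lipschitz) convex function in $C$; you have merely written out the routine verifications (stability of convexity, lower semicontinuity, properness, the homogeneity/triangle constraints, and the transfer of the uniform Lipschitz bound to the supremum) that the paper leaves implicit.
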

\begin{proof}
{\rm i)}. If $C$ is one of the four cones as above, then for any family of functions $\{f_{\alpha}\}_{\alpha}\subseteq C$, if $\sup\limits_{\alpha}f_{\alpha}$ is proper, that is, $\sup\limits_{\alpha}f\neq \infty$. Then, $\sup\limits_{\alpha} f_{\alpha}$ must be a l.s.c. convex function of same type, that is $\sup\limits_{\alpha}f_{\alpha}\in C$.

{\rm ii)}. It is analogous to the proof of {\rm ii)}. Indeed, if $C$ is one of the four cones as {\rm ii)}, then, for any bounded $A\subseteq C$, we have that $\sup\limits_{h\in A}h(x)<+\infty$ for all $x\in X$. This entails that $\sup\limits_{h\in A}h$ is a continuous convex function of same type.
\end{proof}

\section{Fully order preserving mappings on cones of convex functions}
In this section, we shall show that every fully order preserving mapping defined on a bounded sup-complete and $\varrho$-closed cone is bounded $\varrho$-continuous. To begin with, we recall the definition of fully order preserving and reversing mappings.

\begin{definition}
Let $(P,\leq)$ be a partially ordered set. A mapping $T:P\rightarrow P$ is {fully order preserving (resp. reversing)} if it is a bijection satisfying $f\geq g\Longleftrightarrow Tf\geq Tg$ (resp. $f\geq g\Longleftrightarrow Tf\leq Tg$).

\end{definition}
\begin{example}
Let $\mathscr C_{*}(X^*)$ (resp. $\mathscr C_{0,*}(X^*)$) be the collection of all nonempty $w^*$-closed convex sets (resp. containing the origin $0$) of $X^*$. Then $\mathcal D=\partial p(0): {\rm subl}(X)\rightarrow\mathscr C_*(X^*)$ (resp. ${\rm mink}(X)\rightarrow\mathscr C_{0,*}(X^*)$) is a fully order preserving mapping, where $\partial p$ denotes the subdifferential of $p$.
Conversely, $\mathcal S\coloneqq\mathcal D^{-1}: \mathscr C_*(X^*)\rightarrow{\rm subl}(X)$ defined for $C\subseteq \mathscr C_*(X^*)$ by
\[
(\mathcal D^{-1}(C))(x)=\sup_{c\in C}\langle c,x\rangle=\sigma_{C}(x), \;x\in X
\]
is also fully order preserving. Therefore, for every fully order preserving mapping $T: {\rm subl}(X)\rightarrow{\rm subl}(X)$,
\begin{equation}\label{composition1}
F\coloneqq\mathcal DT\mathcal D^{-1}:\mathscr C_*(X^*) \rightarrow\mathscr C_*(X^*)
\end{equation}
is fully order preserving.
\end{example}

The following lemma  was motivated by \cite[Lemma 2]{A-A-M09}.

\begin{lemma}\label{continuity}
Let $C\in{\mathfrak C}(X)$ be a cone, $T: C\rightarrow C $ be a fully order preserving mapping, and $(f_\alpha)\subseteq C$ be a set.
\begin{itemize}
\item[i)] If both $\sup\limits_\alpha f_\alpha\;{\rm and\;}\sup\limits_\alpha Tf_\alpha$ belong to $C$, then
\[
T\sup_\alpha f_\alpha=\sup_\alpha Tf_\alpha.
\]
\item[ii)] If, in addition,  $(f_\alpha)$ is a net, and there is $\eta$ so that for all $\zeta\geq\eta$
\[
\limsup_\alpha f_\alpha,\; \limsup_{\alpha}Tf_\alpha, \;\sup_{\alpha\geq\zeta}f_\alpha\in C,
\]
then
\begin{equation}\label{limits}
\limsup_\alpha Tf_\alpha=T\limsup_\alpha f_\alpha.
\end{equation}
\end{itemize}
\end{lemma}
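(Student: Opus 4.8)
The plan is to prove both parts purely from the defining property of a fully order-preserving bijection, namely that \emph{both} $T$ and its inverse $T^{-1}$ preserve order; no structure of the cone is needed beyond the membership hypotheses, which are exactly what guarantees that every supremum and infimum written down actually lies in $C$, so that $T$ or $T^{-1}$ may legitimately be applied to it.

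For part i), I would set $g=\sup_\alpha f_\alpha$ and $h=\sup_\alpha Tf_\alpha$, both of which lie in $C$ by hypothesis. Since $f_\alpha\le g$ for every $\alpha$, order preservation of $T$ gives $Tf_\alpha\le Tg$, so $Tg$ is an upper bound of $(Tf_\alpha)$ and hence $h\le Tg$. Conversely, from $Tf_\alpha\le h$ and the order preservation of $T^{-1}$ (again a fully order-preserving bijection) I get $f_\alpha=T^{-1}Tf_\alpha\le T^{-1}h$ for every $\alpha$, whence $g\le T^{-1}h$ and therefore $Tg\le h$. The two inequalities yield $Tg=h$, which is the assertion.

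For part ii), I would reduce the $\limsup$ statement to part i) via the tail description of a $\limsup$. Writing $g_\zeta=\sup_{\alpha\ge\zeta}f_\alpha$ for $\zeta\ge\eta$, the net $(g_\zeta)$ is non-increasing and $\limsup_\alpha f_\alpha=\inf_{\zeta\ge\eta}g_\zeta$ pointwise; likewise $h_\zeta:=\sup_{\alpha\ge\zeta}Tf_\alpha$ is non-increasing with $\limsup_\alpha Tf_\alpha=\inf_{\zeta\ge\eta}h_\zeta$. Applying part i) to each tail $\{f_\alpha:\alpha\ge\zeta\}$ gives $Tg_\zeta=h_\zeta$, so in particular $\inf_\zeta Tg_\zeta=\inf_\zeta h_\zeta=\limsup_\alpha Tf_\alpha\in C$. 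It then remains to establish a decreasing-infimum analogue of part i): if $(g_\zeta)$ is non-increasing in $C$ with $\inf_\zeta g_\zeta\in C$ and $\inf_\zeta Tg_\zeta\in C$, then $T(\inf_\zeta g_\zeta)=\inf_\zeta Tg_\zeta$. This is proved by the same two-sided argument: from $\inf_\zeta g_\zeta\le g_\zeta$ one gets $T(\inf_\zeta g_\zeta)\le\inf_\zeta Tg_\zeta$, while applying $T^{-1}$ to $\inf_\zeta Tg_\zeta\le Tg_\zeta$ gives $T^{-1}(\inf_\zeta Tg_\zeta)\le\inf_\zeta g_\zeta$, hence $\inf_\zeta Tg_\zeta\le T(\inf_\zeta g_\zeta)$. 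Combining this with $Tg_\zeta=h_\zeta$ produces $T\limsup_\alpha f_\alpha=\inf_\zeta Tg_\zeta=\inf_\zeta h_\zeta=\limsup_\alpha Tf_\alpha$, which is \eqref{limits}.

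The one genuinely delicate point, and the step I expect to be the main obstacle, is the membership $h_\zeta=\sup_{\alpha\ge\zeta}Tf_\alpha\in C$ needed to invoke part i) on each tail, since this supremum is not literally listed among the hypotheses of ii). It is, however, squeezed between two elements of $C$, namely $\limsup_\alpha Tf_\alpha\le h_\zeta\le Tg_\zeta$, the right inequality holding because $f_\alpha\le g_\zeta$ forces $Tf_\alpha\le Tg_\zeta$ for $\alpha\ge\zeta$; both end terms lie in $C$, and for every cone to which this lemma is applied $C$ is sup-complete by Proposition \ref{sup-complete}, so $h_\zeta\in C$. I would therefore either read sup-completeness into the ambient hypotheses or append $\sup_{\alpha\ge\zeta}Tf_\alpha\in C$ to the statement of ii); once this membership is secured, everything else is the formal order-theoretic bookkeeping above.
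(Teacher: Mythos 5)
Your proof is correct and follows essentially the same route as the paper: part i) is the identical two-sided order argument (the paper invokes surjectivity of $T$ to produce $g$ with $Tg=\sup_\alpha Tf_\alpha$ where you apply $T^{-1}$ directly, which is the same thing), and part ii) is the same reduction to the tails $g_\zeta=\sup_{\alpha\ge\zeta}f_\alpha$ followed by a two-sided order argument for the non-increasing limit. The ``delicate point'' you flag --- that $\sup_{\alpha\ge\zeta}Tf_\alpha\in C$ is needed in order to apply i) to each tail yet is not among the stated hypotheses of ii) --- is real and is in fact passed over silently in the paper's own proof, so your remedy (add it as a hypothesis, or derive it from the squeeze $\limsup_\alpha Tf_\alpha\le\sup_{\alpha\ge\zeta}Tf_\alpha\le Tg_\zeta$ together with the downwardness or sup-completeness of the cones to which the lemma is actually applied) is a genuine tightening of the argument rather than a deviation from it.
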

\begin{proof}
{\rm i)}. Let $\sup\limits_\alpha f_\alpha=f$ and $\sup\limits_\alpha Tf_\alpha=f_T.$ Then $f,f_T\in C$. Since $T$ is surjective, then it entails that there is $g\in C$ so that $Tg=f_T$. Since $T$ is fully order preserving,
$f\geq f_\alpha$ implies $Tf\geq Tf_\alpha$, and further, $Tf\geq Tg$.  Consequently, $f\geq g$. Conversely, $Tg\geq Tf_\alpha$  yields $g\geq f_\alpha$ for all $\alpha$. Thus, $g\geq f$. We have shown $f=g$. Equivalently, $Tf=Tg$.

{\rm ii)}. Let $f=\limsup\limits_\alpha f_\alpha$ and $g_\alpha=\sup\limits_{\xi\geq\alpha} f_\xi$ for all $\alpha\geq\eta$. Then the assumption says $f,g_\alpha\in C$ with $g_\alpha\geq f$, and $f=\lim_\alpha g_\alpha.$  According to {\rm i)},
\begin{equation}\label{comparable}
Tg_\alpha=T\sup_{\xi\geq\alpha}f_\xi=\sup_{\xi\geq\alpha}Tf_\xi\geq Tf,
\end{equation}
and which further deduces
\[
\lim\limits_{\alpha}Tg_\alpha=\limsup\limits_{\alpha}Tf_\alpha\in C.
\]
Note that $T$ is surjective, then it yields that there is $g\in C$ so that  $\lim\limits_{\alpha}Tg_\alpha=Tg$. This and \eqref{comparable} entail  $Tg\geq Tf$. On the other hand, the isotonicity (i.e. order preserving) of $T$ and
non-increasing monotonicity of $(g_\alpha)$ deduce that $g_\alpha\geq g$ for all $\alpha$. Consequently, $f=\limsup\limits_\alpha f_\alpha=\lim\limits_\alpha g_\alpha\geq g.$
This and the isotonicity of $T$ again lead to $Tf\geq Tg$. We finish the proof by noting that $Tf=Tg$ is equivalent to \eqref{limits}.
\end{proof}
\begin{theorem}\label{continuity theorem}
Suppose that $C\subseteq{\rm C_{\rm conv}}(X)$ is a bounded sup-complete and $\varrho$-closed. If $T: C\rightarrow C$ is fully preserving. Then $T$ is bounded $\varrho$-continuous, i.e.
for every bounded net $(f_\alpha)\subseteq G$ $\varrho$-convergent to $f\in G$, we have $Tf=\lim\limits_{\alpha} Tf_{\alpha}$.
\end{theorem}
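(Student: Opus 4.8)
The plan is to establish the two one-sided pointwise estimates $\limsup_\alpha Tf_\alpha\le Tf$ and $\liminf_\alpha Tf_\alpha\ge Tf$, whose conjunction is precisely the asserted $\varrho$-convergence $Tf_\alpha\to Tf$. Both estimates will be obtained by squeezing $(Tf_\alpha)$ between the images of two monotone ``running'' nets extracted from $(f_\alpha)$ and then invoking the order-continuity recorded in Lemma \ref{continuity}. The recurring role of the hypothesis that $(f_\alpha)$ is \emph{bounded} (uniformly Lipschitz and bounded at one point) is to keep every auxiliary supremum inside $C$, via bounded sup-completeness (Proposition \ref{sup-complete}).

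For the upper estimate I would set $g_\zeta=\sup_{\alpha\ge\zeta}f_\alpha$. Since the tail $\{f_\alpha:\alpha\ge\zeta\}$ is bounded, $g_\zeta\in C$, and $(g_\zeta)$ is non-increasing with $\inf_\zeta g_\zeta=\limsup_\alpha f_\alpha=f\in C$. As $T$ is order-preserving, $Tf_\alpha\le Tg_\zeta$ for $\alpha\ge\zeta$, so $\sup_{\alpha\ge\zeta}Tf_\alpha\le Tg_\zeta\in C$; downwardness of $C$ with respect to itself (Proposition \ref{downward}) then puts $\sup_{\alpha\ge\zeta}Tf_\alpha$ in $C$, and Lemma \ref{continuity} i) gives $Tg_\zeta=\sup_{\alpha\ge\zeta}Tf_\alpha$. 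The decreasing net $(Tg_\zeta)$ is pinched between $Tf$ and $Tg_\eta$, both continuous, so its pointwise limit $\limsup_\alpha Tf_\alpha$ is a finite convex function locally bounded above, hence continuous and again in $C$ (immediately so for the concrete cones, where sublinearity, homogeneity and the seminorm axioms are stable under decreasing limits). All three membership hypotheses of Lemma \ref{continuity} ii) are therefore met, and it yields $\limsup_\alpha Tf_\alpha=T\limsup_\alpha f_\alpha=Tf$.

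For the lower estimate I would run the dual construction. Let $\ell_\zeta=\sup\{h\in C: h\le f_\beta\ \forall\,\beta\ge\zeta\}$ be the greatest $C$-minorant of the $\zeta$-tail. As a supremum of a subfamily of $C$ dominated by $f_\zeta\in C$, $\ell_\zeta\in C$ by downwardness; the net $(\ell_\zeta)$ is non-decreasing and $\ell_\zeta\le\inf_{\beta\ge\zeta}f_\beta\le f$. Granting $\sup_\zeta\ell_\zeta=f$, I apply Lemma \ref{continuity} i) to the increasing net $(\ell_\zeta)$: since $T\ell_\zeta\le Tf_\beta\le Tg_\eta\in C$ for $\beta\ge\zeta$, downwardness gives $\sup_\zeta T\ell_\zeta\in C$, whence $\sup_\zeta T\ell_\zeta=T\sup_\zeta\ell_\zeta=Tf$. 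Finally $T\ell_\zeta\le Tf_\beta$ for all $\beta\ge\zeta$ forces $\liminf_\alpha Tf_\alpha\ge\sup_\zeta T\ell_\zeta=Tf$, and together with the upper estimate this proves $Tf_\alpha\to Tf$.

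The main obstacle is the single assertion $\sup_\zeta\ell_\zeta=f$, namely that the greatest common $C$-minorants of the tails climb back up to $f$. This is exactly where uniform Lipschitzness is indispensable: pointwise convergence alone would permit $f_\alpha$ to dip arbitrarily far below a prospective minorant, but the common Lipschitz bound $L$ confines these dips and lets one upgrade a pointwise minorant lying below $f$ into a genuine minorant of an \emph{entire} tail, so that every $C$-function strictly below $f$ eventually lies under all $f_\beta$. I expect this approximation step, rather than the order-theoretic bookkeeping, to carry the real content; the companion verification that $\limsup_\alpha Tf_\alpha\in C$ is comparatively routine once the sandwich between $Tf$ and $Tg_\eta$ is in place.
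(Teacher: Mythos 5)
Your upper estimate is essentially the paper's argument verbatim: form the tail suprema $g_\zeta=\sup_{\alpha\ge\zeta}f_\alpha$, use bounded sup-completeness and downwardness to keep everything in $C$, and invoke Lemma \ref{continuity} i) and ii) to get $\lim_\zeta Tg_\zeta=Tf$. The divergence, and the gap, is in your lower estimate. The assertion you yourself flag as the crux --- that the greatest common $C$-minorants $\ell_\zeta=\sup\{h\in C:h\le f_\beta\ \forall\beta\ge\zeta\}$ of the tails climb back up to $f$ --- is not merely unproven; it is false in infinite dimensions, and uniform Lipschitzness does not rescue it. Take $X=\ell_2$ and $f_n=\langle e_n,\cdot\rangle$. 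This is a bounded net in $C_{\rm conv}(X)$ (norm one, vanishing at $0$) which is $\varrho$-convergent to $f=0$. For $m\ge n\ge 2$ one has $\inf_{k\ge n}\langle e_k,e_1+te_m\rangle=\min(0,t)$, and writing $e_1=\frac{s}{s+t}(e_1+te_m)+\frac{t}{s+t}(e_1-se_m)$ gives, for any convex $h$ lying below every $f_k$ with $k\ge n$, the bound $h(e_1)\le\frac{s}{s+t}\cdot 0+\frac{t}{s+t}(-s)=-\frac{st}{s+t}\to-\infty$. So no real-valued convex function minorizes an entire tail: the family defining $\ell_\zeta$ is empty, and $\sup_\zeta\ell_\zeta=f$ fails completely. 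The heuristic that the common Lipschitz bound ``confines the dips'' is exactly what breaks down: pointwise convergence of uniformly Lipschitz convex functions is uniform on compact sets only, whereas a common minorant of a tail must work globally on $X$.

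The paper closes the lower estimate by an entirely order-free device: every subnet $(h_\beta)$ of $(f_\alpha)$ is again bounded and $\varrho$-convergent to $f$, so the upper-estimate argument applies to it and yields $\lim_\beta\sup_{\xi\ge\beta}Th_\xi=Tf$. Since for each fixed $x$ some subnet of the real net $(Tf_\alpha(x))$ has limit $\liminf_\alpha Tf_\alpha(x)$, and the limsup of that subnet must also be $Tf(x)$, one gets $\liminf_\alpha Tf_\alpha(x)=\limsup_\alpha Tf_\alpha(x)=Tf(x)$. If you replace your second half by this subnet argument (or supply some other genuinely global lower bound), the proof goes through; as written, the lower estimate does not.
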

\begin{proof}
Let $(f_\alpha)\subseteq C$ be a bounded net which is $\varrho$-convergent to $f\in C$. Since $C$ is a bounded sup-complete, it follows that both $g\coloneqq\sup\limits_\alpha f_\alpha$ and $g_\alpha\coloneqq\sup\limits_{\xi\geq\alpha}f_\xi$ belong to $C$.
By Lemma \ref{continuity} {\rm i)}, we have that
\begin{equation}\label{compare inequality}
Tf\leq Tg_\alpha=\sup_{\xi\geq\alpha}Tf_{\xi}\leq Tg\in C, \;\;\;{\rm for\;all\;}\alpha.
\end{equation}
Note that $Tg\in C$ and hence $Tg$ is continuous, then, by the sup-completeness again, it follows that $Tg_{\alpha}=\sup\limits_{\xi\geq \alpha}Tf_{\xi}\in C$ for all $\alpha$.

The non-increasing monotonicity of $\{Tg_\alpha\}_{\alpha}$ implies that the function
\[
g_{T}(x)=\lim\limits_{\alpha}Tg_{\alpha}(x),~x\in X,
\]
is a extended real-valued convex function. It follows from \eqref{compare inequality} that
\begin{equation}\label{dominated}
Tf\leq g_T\leq\sup_{\xi\geq\alpha}Tf_\xi\in C \;\;\;{\rm for\;all\;}\alpha.
\end{equation}
By the fact that a convex function which is locally bounded above then it must locally Lipschitzian (see \cite[p.39 Proposition 3.3]{Ph}). Thus, by \eqref{dominated}, we have that $g_T$ is continuous. Since $C$ is $\varrho$-closed, then we obtain that $g_{T}\in C$. According to Lemma \ref{continuity} {\rm ii)},
\[
Tf=T\lim_\alpha\sup_{\xi\geq\alpha} f_\alpha=\lim_\alpha\sup_{\xi\geq\alpha}Tf_\alpha.
\]
It is easy to observe that for every subnet $\{f_{\alpha^{\prime}}\}$ of $\{f_{\alpha}\}$ we also have
\begin{equation}\label{subnet}
Tf=\lim\limits_{\beta^{\prime}}\sup_{\xi^{\prime}\geq\beta^{\prime}}Tf_{\xi^{\prime}}.
\end{equation}
Therefore, $\lim\limits_{\alpha}Tf_{\alpha}$ exists and equals to $Tf$. Indeed, if there exist $\varepsilon_{0}>0$, $x_{0}\in X$ and a subnet $\{f_{\beta^{\prime}}\}_{\beta^{\prime}}$ of $\{f_{\alpha}\}$ so that
\[
\big{|}(Tf_{\beta^{\prime}})(x_{0})-(Tf)(x_{0})\big{|}>\varepsilon_{0}.
\]
By passing to a further subnet, we can assume, without loss of generality, that
\begin{equation}\label{strict inequality}
(Tf_{\beta^{\prime}})(x_{0})<(Tf)(x_{0})-\varepsilon_{0}.
\end{equation}
Hence, by \eqref{strict inequality}, it follows that
\[
\lim\limits_{\beta^{\prime}}\sup\limits_{\xi^{\prime}\geq \beta^{\prime}}Tf_{\xi^{\prime}}(x_{0})\leq Tf(x_{0})-\varepsilon_{0}<Tf(x_{0}),
\]
which contradicts to \eqref{subnet}.
\end{proof}

\begin{corollary}\label{bounded rho continuity}
Suppose that the cone $C$ is one of the following eight cones
\[
{\rm conv}(X), {\rm C}_{\rm conv}(X), {\rm subl}(X), {\rm C}_{\rm subl}(X), {\rm mink}(X), {\rm C}_{\rm mink}(X), {\rm semn}(X), {\rm C}_{\rm semn}(X).
\]
Then every fully order preserving mapping is bounded $\varrho$-continuous.
\end{corollary}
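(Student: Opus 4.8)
The plan is to read off the corollary directly from Theorem \ref{continuity theorem}, which asserts that any fully order preserving self-map of a cone that is (a) bounded sup-complete and (b) downward with respect to itself is automatically bounded $\varrho$-continuous. Thus the entire task reduces to verifying, for each of the eight listed cones $C$, that both hypotheses hold; once this bookkeeping is done, the theorem applies verbatim in each case and there is nothing left to prove.

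First I would dispose of bounded sup-completeness. For the four cones ${\rm conv}(X)$, ${\rm subl}(X)$, ${\rm mink}(X)$, ${\rm semn}(X)$, Proposition \ref{sup-complete} i) asserts the stronger property of full sup-completeness; since every bounded subset is in particular a subset, sup-completeness trivially entails bounded sup-completeness. For the remaining four cones $C_{\rm conv}(X)$, $C_{\rm subl}(X)$, $C_{\rm mink}(X)$, $C_{\rm semn}(X)$, bounded sup-completeness is precisely the content of Proposition \ref{sup-complete} ii). Hence all eight cones satisfy hypothesis (a).

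The only point requiring a genuine (if short) argument is the passage from the form of ``downward'' recorded in Proposition \ref{downward} to ``downward with respect to itself'' as demanded by the theorem. Here I would isolate the elementary monotonicity principle: if $C\subseteq D'\subseteq D$ and $C$ is downward with respect to $D$, then $C$ is downward with respect to $D'$. Indeed, any $A\subseteq D'$ is a fortiori a subset of $D$, so the defining implication for downwardness with respect to $D$ applies unchanged. Specializing to $D'=C$ shows that downwardness with respect to any larger reference cone forces downwardness with respect to $C$ itself.

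With this principle in hand the verification of hypothesis (b) is immediate. The cones ${\rm conv}(X)$, $C_{\rm conv}(X)$, ${\rm subl}(X)$, $C_{\rm subl}(X)$ are downward, i.e.\ downward with respect to ${\rm conv}(X)$, by Proposition \ref{downward} i); since each is contained in ${\rm conv}(X)$, the principle gives downwardness with respect to itself. Likewise, ${\rm mink}(X)$, $C_{\rm mink}(X)$ are downward with respect to ${\rm mink}(X)$ and ${\rm semn}(X)$, $C_{\rm semn}(X)$ are downward with respect to ${\rm semn}(X)$ by Proposition \ref{downward} ii); as each of these four cones sits inside its corresponding reference cone, the same principle yields downwardness with respect to itself. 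Having confirmed both hypotheses for every cone on the list, Theorem \ref{continuity theorem} applies in each case, completing the proof. I do not anticipate any serious obstacle: the only subtlety worth flagging is the matching of each cone to the correct clause of Propositions \ref{sup-complete} and \ref{downward} and the conversion, via the monotonicity principle, of the recorded downwardness into downwardness with respect to the cone itself.
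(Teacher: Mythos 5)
Your reduction to Theorem \ref{continuity theorem} is sound for the four cones $C_{\rm conv}(X)$, $C_{\rm subl}(X)$, $C_{\rm mink}(X)$, $C_{\rm semn}(X)$, and your monotonicity observation about downwardness is correct, but the proposal breaks down for the other four cones. Theorem \ref{continuity theorem} carries the standing hypothesis that $C$ consists of \emph{continuous} convex functions (that is what ``$C\in{\rm C}_{\rm conv}(X)$'' is meant to say, and it is what the proof of that theorem actually uses: the squeeze $Tf\le g_T\le\sup_{\xi\ge\alpha}Tf_\xi$ forces the decreasing limit $g_T$ to be continuous, hence to lie in $C$ by downwardness, only because the bounding functions are finite-valued). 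The cones ${\rm conv}(X)$, ${\rm subl}(X)$, ${\rm mink}(X)$, ${\rm semn}(X)$ contain extended real-valued members such as indicator functions of closed convex sets, so the theorem does not apply to them verbatim, and the obstruction is mathematical rather than merely formal: for a bounded net $(f_\alpha)\to f$ (which, by the paper's definition of boundedness, consists of Lipschitz functions), nothing in your argument prevents the images $Tf_\alpha$ or $Tf$ from taking the value $+\infty$; in that case the pointwise decreasing limit $g_T=\lim_\alpha\sup_{\xi\ge\alpha}Tf_\xi$ need not be lower semicontinuous, so it may fall outside the cone and Lemma \ref{continuity} ii) cannot be invoked.

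The missing step --- which is in fact the entire substance of the paper's proof --- is to show that a fully order preserving $T$ on one of these four cones maps every continuous member to a continuous member; then $T$ restricts to a fully order preserving bijection of the continuous subcone $C_C$, and Theorem \ref{continuity theorem} applies to that restriction, which suffices because a bounded net and its limit automatically lie in $C_C$. The paper proves this preservation of continuity by contradiction: if $Tf$ took the value $+\infty$ at some point $p$ (respectively along a ray or a line in the sublinear, Minkowski and seminorm cases), one compares with the maximal elements $\delta_D$ of the cone, uses surjectivity to write $\delta_D$ as an image under $T$, and concludes from $T(f\vee\delta_E)=Tf\vee T\delta_E$ that $T$ carries a proper, non-maximum element onto the improper function $\equiv+\infty$ or onto the maximum $\delta_0$, contradicting full order preservation. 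Without an argument of this kind your proof covers only half of the listed cones.
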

\begin{proof}
By Proposition \ref{sup-complete} {\rm ii)}, we have that
\[
C\in\{ {\rm C}_{\rm conv}(X),~ {\rm C}_{\rm subl}(X),~ {\rm C}_{\rm mink}(X),~ {\rm C}_{\rm semn}(X)\}
\]
is bounded sup-complete and the $\varrho$-closeness of $C$ is obvious, then, by Theorem \ref{continuity theorem}, it suffices to show the conclusion holds for
\[
C\in\{{\rm conv}(X), {\rm subl}(X), {\rm mink}(X),  {\rm semn}(X)\}.
\]
Given such a cone $C$ and note that the definition of $\varrho$-closeness only invokes the subset ${\rm C}_{C}$ of $C$. Then, for a fully order preserving mapping $T:C\rightarrow C$, according to Theorem \ref{continuity theorem} again, it suffices to prove that the restriction $T|_{{\rm C}_C}$ maps each continuous function $f\in {\rm C}_{C}$ into a continuous one.

Suppose, to the contrary, that there is $f\in {\rm C}_{C}$ but $Tf\notin {\rm C}_{C}$. Since $Tf$ is l.s.c. and proper, there exists $0\neq x_{0}\in X$ so that

case {\rm i)}. $(Tf)(x_{0})=\infty$ if $C={\rm conv}(X)$;

case {\rm ii)}. $(Tf)(\lambda x_{0})=\infty$ for all $\lambda>0$, if $C={\rm subl}(X)$, or,  ${\rm mink}(X)$;

case {\rm iii)}. $(Tf)(\lambda x_{0})=\infty$ for all $\lambda\neq0$, if $C={\rm semn}(X)$.

Let
\[
D=\left\{\begin{array}{ccc}
                    \{x_{0}\}~, &{\rm case\; i)} ; \\
                    \{\lambda x_{0}: \lambda\geq 0\}~,  &{\rm  case\; ii)} ;\\
                      \{\lambda x_{0}: \lambda\in\mathbb R\}~, &{\rm  case\; iii)}.
                  \end{array}
\right.
\]
For case {\rm i)} the convex function $\delta_D=\delta_{x_{0}}$ is a maximal element of $C={\rm conv}(X)$. Since $T$ is fully order preserving, there is a $q\in X$ so that $\delta_{x_{0}}=T\delta_{q}$.
Since $f$ is continuous, then $g\coloneqq f\vee\delta_{q}\in C$. Therefore, by Lemma \ref{continuity} {\rm i)},
\[
\infty=Tf\vee T\delta_{q}=Tg\in C,
\]
and this is a contradiction.

For case {\rm ii)} the convex function $\delta_D$ is a maximal element but not the maximum in $C\setminus\{\delta_0\}$, $T\delta_D$ is again  maximal in $C\setminus\{\delta_0\}$. Therefore, there is $0\neq q\in X$ so that $T\delta_E=\delta_D$, where $E=\{\lambda q:\lambda\geq0\}$.
Thus,
\[
\delta_0=Tf\vee T\delta_E=T(f\vee\delta_E)
\]
is the maximum element in $C$. But $f\vee\delta_E$ is not the maximum element of $C$. This contradicts to that $T$ is fully order preserving.

Analogously, we can show that case {\rm iii)} cannot happen.
\end{proof}

\section{Fully order preserving mappings restricted to sup-generating classes}
We will prove in this section that fully order preserving mappings behave nicely when restricted to certain sup-generating class.

Let $G$ be a sup-generating class of $C$, and a mapping $T:G\to G$. We say that $T$ is affine if for $u$, $v\in G$ with $\lambda\in \R$ so that $\lambda u+(1-\lambda)v$, $T(u)$, $T(v)$ and $\lambda T(u)+\lambda T(v)$ are all in $G$, then $T\big{(}\lambda u+(1-\lambda)v \big{)}=\lambda T(u)+(1-\lambda) T(v)$. Similarly, we can define a mapping $T:G\to G$ is additive (resp. homogeneous) in the same way.

The following results state that for a fully order preserving mapping $T$ defined on a cone $C$,  many nice properties of $T$ on a sup-generating class of $C$ can be passed on to the whole cone $C$.
\begin{lemma}\label{generating}
Let $C\in\mathfrak{C}(X)$, and $G$ be a  sup-generating class of $C$. Suppose that $T:C\rightarrow C$ is a fully order preserving mapping. Then
\begin{itemize}
\item[i)] $T$ is affine on $C$ if and only if ${\rm for\; all\;}\lambda\in\R\;{\rm and}\; u,v\in G,$
\[
T(\lambda u+(1-\lambda)v)=\lambda Tu+(1-\lambda)Tv;
\]
\item[ii)] $T$ is additive on $C$ if and only if
\[
T(u+v)=Tu+Tv\;\;{\rm for\; all}\; u,v\in G;
\]
\item[iii)] $T$ is positively homogenous  on $C$ if and only if  it is positively homogenous on $G$.
\end{itemize}
\end{lemma}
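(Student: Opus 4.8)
The plan is to handle all three parts by a single, uniform mechanism. Each ``only if'' direction is immediate, since $G\subseteq C$ and so the asserted identity on $G$ is just the restriction of the corresponding identity on $C$. Hence the whole content lies in the ``if'' directions, and for these I would run the same three-move pattern: rewrite an element of $C$ as a supremum of members of $G$, interchange $T$ with that supremum using Lemma \ref{continuity} i), apply the given algebraic identity termwise on $G$, and then interchange $T$ back with the supremum on the image side (again by Lemma \ref{continuity} i)). The point of the lemma is exactly that $\varrho$-closure under suprema lets one transport a pointwise algebraic relation from a sup-generating class to the entire cone.

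First I would fix, for $f,g\in C$, subsets $B_1,B_2\subseteq G$ with $\sup B_1=f$ and $\sup B_2=g$ (these exist because $G$ is sup-generating; concretely one may take $B_1=B_f(G)$ and $B_2=B_g(G)$). The elementary pointwise identities
\[
\lambda f+(1-\lambda)g=\sup\{\lambda u+(1-\lambda)v:u\in B_1,\,v\in B_2\},\qquad f+g=\sup\{u+v:u\in B_1,\,v\in B_2\},\qquad tf=\sup\{tu:u\in B_1\}
\]
then hold for $\lambda\in[0,1]$ and $t\ge0$, and all the combined functions $\lambda u+(1-\lambda)v$, $u+v$, $tu$ lie in $C$, since a convex cone that is closed under nonnegative scalar multiplication is automatically closed under addition. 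For part i) I would apply Lemma \ref{continuity} i) to the family $\{\lambda u+(1-\lambda)v\}$, insert the hypothesis $T(\lambda u+(1-\lambda)v)=\lambda Tu+(1-\lambda)Tv$ valid on $G$, and then apply Lemma \ref{continuity} i) once more to the families $\{Tu:u\in B_1\}$ and $\{Tv:v\in B_2\}$ to identify $\sup_{B_1}Tu=Tf$ and $\sup_{B_2}Tv=Tg$; chaining these equalities gives $T(\lambda f+(1-\lambda)g)=\lambda Tf+(1-\lambda)Tg$. Parts ii) and iii) follow by the identical pattern with $u+v$, respectively $tu$, in place of the affine combination, using additivity (respectively positive homogeneity) of $T$ on $G$ at the middle step.

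The hard part is not the bookkeeping above but verifying, at each interchange, the standing hypothesis of Lemma \ref{continuity} i) that the supremum of the \emph{image} family again belongs to $C$. Here I would first establish the purely order-theoretic fact that $Tf$ is the least upper bound \emph{in} $C$ of $\{Tu:u\in B_1\}$: it is an upper bound because $u\le f\Rightarrow Tu\le Tf$, and if $m\in C$ dominates every $Tu$ then, writing $m=T\tilde m$ by surjectivity, the order-reflecting property of $T$ gives $\tilde m\ge u$ for all $u$, whence $\tilde m\ge f$ and $m=T\tilde m\ge Tf$. For the cones actually at stake this least upper bound is realized by the pointwise supremum, because $\sup_{B_1}Tu$ is a proper lsc convex function dominated by $Tf$ and the cones ${\rm conv}(X)$, ${\rm subl}(X)$, ${\rm mink}(X)$, ${\rm semn}(X)$ are sup-complete; thus $\sup_{B_1}Tu\in C$ and must coincide with $Tf$. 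This is precisely what legitimizes every application of Lemma \ref{continuity} i) in the argument, and securing it is the main obstacle; the remainder of the proof is formal once it is in hand.
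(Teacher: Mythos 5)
Your proposal is correct and follows essentially the same route as the paper: write $f,g$ as suprema over $G$, interchange $T$ with the supremum via Lemma \ref{continuity} i), apply the identity termwise on $G$, and reassemble. If anything, you are more careful than the paper, which silently assumes the hypothesis of Lemma \ref{continuity} i) that the suprema of the image families lie in $C$ — the point you explicitly secure via the least-upper-bound argument and sup-completeness.
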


\begin{proof}
Given $f,g\in C$ and $0\leq \lambda \leq 1$, let $(f_{\alpha})$, $(g_{\beta})\subseteq G$ such that $f=\sup\limits_{\alpha}f_{\alpha}$ and $g=\sup\limits_{\beta}g_{\beta}$. Then
\[
\begin{split}
T\big{(}\lambda f+(1-\lambda)g\big{)}&=T\big{(}\lambda\sup\limits_{\alpha}f_{\alpha}+(1-\lambda)\sup\limits_{\beta}g_{\beta}
\big{)}\\
                         &=T\big{(}\sup\limits_{\alpha,~\beta}(\lambda f_{\alpha}+(1-\lambda) g_{\beta})\big{)}\\
                         &=\sup\limits_{\alpha,~\beta}T\big{(}\lambda f_{\alpha}+(1-\lambda) g_{\beta}\big{)}\\
                         &=\sup\limits_{\alpha,~\beta}\big{(}\lambda T f_{\alpha}+(1-\lambda) T g_{\beta} \big{)}\\
                         &=\sup\limits_{\alpha}\lambda T(f_{\alpha})+\sup\limits_{\beta} (1-\lambda)T(g_{\beta})\\
                         &=\lambda T(f)+(1-\lambda)T(g),
\end{split}
\]
which completes the proof of {\rm i)}.
The proof of {\rm ii)} and {\rm iii)} are in the same spirt of the case {\rm i)}.
\end{proof}

The following proposition states that fully order preserving mappings defined on a class $C$ of convex functions can be reduced into a nice subclass of $C$.

\begin{proposition}\label{restriction}
Let $C\in\mathfrak{C}(X)$, and $T: C\rightarrow C$ be a fully order preserving mapping.
Assume that $C$ admits a perfect class $G$. Then $T|_G: G\rightarrow G$ is again a fully order preserving.
\end{proposition}

\begin{proof}
Clearly, it suffices to show $TG=G$. Since $G$ is a sup-generating class and $T$ is a fully order preserving mapping, $TG$ is again a sup-generating class of $C$. Thus, given $g\in G$, we get $\sup\limits_{h\in B_g(TG)}h=g$. Purity of $G$ deduces $B_g(TG)\subseteq G$, and further, there is a monotone non-decreasing net $\{g_\alpha\}_{\alpha}\subseteq TG\cap G$ so that $\lim\limits_{\alpha} g_\alpha=\limsup\limits_{\alpha} g_\alpha=g.$ Let $Th_\alpha=g_\alpha (\leq g)$ for some $h_\alpha\in G$ and for each $\alpha$. By the fact that $T$ is order preserving, then it yields that $\{h_{\alpha}\}$ is also monotone non-decreasing. Thus, there is a convex function $h$ such that $\lim\limits_{\alpha} h_{\alpha}=h$. Now the $\varrho$-closeness of $G$ implies $h\in G$. By Lemma \ref{continuity} {\rm ii)},
\[
g=\lim_\alpha Th_\alpha=\limsup_\alpha Th_\alpha=T\limsup_\alpha h_\alpha=Th.
\]
We have shown $G\subseteq TG$.
Note that $T^{-1}$ is also fully order preserving. By a similar discussion on $T^{-1}$, we get $T^{-1}(G)\supseteq G.$
Hence, $TG=G$. 
\end{proof}

\section{Fully order preserving mappings on certain classes of cones}
In this section, we shall show that every fully order preserving mapping defined on some specific classes of cones including ${\rm conv}(X)$ and ${\rm subl}(X)$ is affine and $\varrho$-continuous on its sup-generating class.

Recall that for two convex functions $u$, $v$, denoted by $[u,v]$ the interval generated by $u$ and $v$, that is $[u,v]=\{\lambda u+(1-\lambda)v:\lambda\in[0,1]\}$. A subset $M$ of convex functions is said to be ordered (resp. orderless)
if for every $u$, $v\in M$, then either $u\leq v$ or $v\leq u$ (resp. $u$ and $v$ are incomparable).
\begin{theorem}\label{thm}
Suppose that $C\in\mathfrak{C}(X)$ is a bounded sup-complete, which consists of continuous convex functions, and $T:C\rightarrow C$ is a fully order preserving mapping.
Assume that $C$ admits a $\varrho$-closed pure class  $G\subseteq{\rm aff}(X)=X^*\oplus\mathbb R$. Then
\begin{itemize}
\item[i)] $G$ is perfect and $T: C\rightarrow C$ is affine with $TG=G$;
\item[ii)] $T$ is $\varrho$-continuous on $G$;
\item[iii)] In particular, if $G\subseteq X^*\oplus\mathbb R $ is a $w^*$-closed linear subspace, then $S=T-T(0)$ is a $w^*$-continuous linear operator on $G$.
\end{itemize}
\end{theorem}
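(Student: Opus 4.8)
The plan is to argue in four stages: perfectness of $G$, bounded $\varrho$-continuity together with the invariance $TG=G$, affinity of $T$, and finally the linear conclusion of ii). For the first stage, since $G$ is given to be pure and $\varrho$-closed, by Definition \ref{def3.2} it remains only to verify atomicity. Here I would exploit the rigidity of the order on $\mathrm{aff}(X)=X^*\oplus\mathbb R$: if $s=\psi+d$ and $g=\varphi+c$ are affine and $s\le g$ everywhere, then $\psi-\varphi$ is a linear functional bounded above, forcing $\psi=\varphi$ and $d\le c$. Consequently, given any $S\subseteq G$ with $\sup S=g\in G$, every member of $S$ shares the linear part of $g$ and is dominated in its constant term, so $S$ is a chain whose constants have supremum that of $g$; directing $S$ by its own order produces a monotone non-decreasing net $\varrho$-converging to $g$. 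This gives atomicity, hence $G$ is perfect.

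For the second stage I would simply invoke the earlier machinery: since $C$ is bounded sup-complete and downward with respect to itself and consists of continuous convex functions, Theorem \ref{continuity theorem} yields that $T$ is bounded $\varrho$-continuous, and since $G$ is purely atomic, Lemma \ref{restriction} shows that $T|_G\colon G\to G$ is again fully order-preserving with $TG=G$.

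The third stage is the heart of the matter. By Lemma \ref{generating} i) it suffices to establish the Jensen identity $T(\lambda u+(1-\lambda)v)=\lambda Tu+(1-\lambda)Tv$ for $u,v\in G$. First I would prove that $T$ preserves segments: fix $u,v\in G$ with distinct linear parts and set $h=u\vee v\in C$, so that $Th=Tu\vee Tv$ by Lemma \ref{continuity} i). Since $T$ and $T^{-1}$ preserve order, they carry incomparable pairs to incomparable pairs, and two affine functions are incomparable precisely when their linear parts differ; hence $Tu,Tv$ again have distinct linear parts. Lemma \ref{upperbound} then characterizes $[u,v]$ order-theoretically as the minimum upper bound of $A_h$, and likewise $[Tu,Tv]$ as that of $A_{Th}$; as $T$ is an order isomorphism carrying $\{g\in G:g\le h\}$ onto $\{g\in G:g\le Th\}$ and preserving minimum upper bounds, we obtain $T[u,v]=[Tu,Tv]$. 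The main obstacle will be upgrading this set-level segment-preservation to preservation of the affine parameter. The order on $\mathrm{aff}(X)$ splits into mutually incomparable fibers $\{\varphi+c:c\in\mathbb R\}$, one per linear part, so $T$ induces a map $\sigma$ on the linear parts occurring in $G$ together with increasing reparametrizations of the fibers; segment-preservation forces $\sigma$ to send segments of $X^*$ to segments, i.e. $\sigma$ is a collineation. Combining this incidence-geometric fact with the $\varrho$-continuity of stage two (which is $w^*$-continuity after the identification of Proposition \ref{sublinearconvex}) is what pins $\sigma$ down to be affine, through the continuous form of the fundamental theorem of affine geometry (Theorem \ref{mainthm1}); once $\sigma$ is affine the matching parameter equals $\lambda$, and a Jensen condition then forces the fiber reparametrizations to be affine and consistent, yielding the identity. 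I expect this parameter-rigidity step to be the genuinely hard point, since segment-preservation on its own is purely combinatorial and only continuity plus the incidence geometry of $X^*$ forces linear rather than merely order-theoretic behavior.

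For part ii), if $G$ is a $w^*$-closed subspace then $0\in G$, and by i) the map $T|_G$ is affine and $\varrho$-continuous, i.e. $w^*$-continuous on $G\subseteq X^*\oplus\mathbb R$. Hence $S\equiv T|_G-T(0)$ is affine with $S(0)=0$, so $S$ is linear, and it inherits $w^*$-continuity from $T|_G$, which completes the proof.
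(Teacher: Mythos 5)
Your proposal is correct and follows essentially the same route as the paper: perfectness of $G$ via the order-rigidity of affine functions (comparable affine functions share their linear part), bounded $\varrho$-continuity and $TG=G$ from Theorem \ref{continuity theorem} and Lemma \ref{restriction}, segment-preservation split into the ordered case and the orderless case via the minimum-upper-bound characterization of Lemma \ref{upperbound}, and then the fundamental theorem of affine geometry plus Lemma \ref{generating} to pass from segment-preservation to affinity. The only difference is one of emphasis: you flag the passage from set-level segment-preservation to affine parametrization as the delicate point and sketch how continuity and the collineation structure resolve it, whereas the paper compresses this into the single phrase ``due to the fundamental theorem of affine geometry''.
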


\begin{proof}
{\rm i)}. To prove that $G$ is a perfect class of $C$, it suffices to show that $G$ is atomic. Assume $S\subseteq G$ so that $\sup\limits_{h\in S}h=g\in G\subseteq {\rm aff(X)}$. Without loss of generality, $g=x^*+r$ for some $x^*\in X^*, r\in \mathbb{R}$. Since $\sup\limits_{h\in S}h=g$, then $S$ must consist of all elements of the form $x^*+r_{\alpha}$ for some $r_{\alpha}\leq r$, and hence $\sup{r_{\alpha}}=r$.
Consequently, let $f_{\beta}=x^*+r_{\beta}\subseteq S$ such that $f_{\beta}\nearrow g$ in the $\varrho$-topology.
Hence, $G$ is perfect. Applying with the proof of Proposition \ref{restriction}, $TG=G$.


In order to show the affinity of $T$, due to the fundamental theorem of affine geometry and Lemma \ref{generating}, we only need to prove that $T[u,v]=[Tu,Tv]$  for all $u,v\in G$. Note that $G\subseteq {\rm aff}(X)$, then a segment $[u,v]$ is ordered (resp. orderless) if and only if  $\{u,v\}$  is ordered (resp. orderless).
Now, fix two different elements $u=\varphi+c, v=\psi+d\in G$, and let $h=u\vee v$.

If $\{u,v\}$ is ordered, say $u\geq v$, then $h=u=\varphi+c,v=\phi+d$ with $c\geq d$. Therefore,
\[
T[u,v]\subseteq B_{Tu}(C)\cap\{w\in C:w\geq Tv\}=[Tu,Tv].
\]
Since $T^{-1}$ is also fully order preserving, by a similar discussion but on $T^{-1}$, we have $T^{-1}[Tu,Tv]\subseteq[u,v]$. Thus, $T[u,v]=[Tu,Tv]$.

It remains to the case when $\{u,v\}$ is orderless, then $\varphi\neq\psi$, $[u,v]$ and $\{Tu,Tv\}$ are orderless. By Lemma \ref{upperbound}, $[u,v]$ is the minimum upper bound of $B_h(G)$.
Since $T$ is fully order preserving, $T[u,v]$ is an orderless set which is the minimum upper bound of $TB_h(G)=B_{Th}(G)=B_{Tu\vee Tv}(G)$.
On the other hand, since $[Tu,Tv]$ is also the   minimum upper bound  of  $B_{Tu\vee Tv}(G)$, we obtain $T[u,v]=[Tu,Tv]$. Thus, we have shown that $T|_G$ is affine.

{\rm ii)}. By Theorem \ref{continuity theorem}, we have shown that $T|_{G}:G\to G$ is bounded $\varrho$-continuous. Since
$G\subseteq {\rm aff}(X)=X^{*}\oplus \R$, then the $\varrho$-topology can be identified by the $w^{*}$-topology by noting Proposition \ref{sublinearconvex}. By {\rm i)}, we have that $T$ is affine, then $S=T-T(0)$ is linear, which is bounded $\varrho$-continuous. Thanks to the Grothendieck's dual characterization of completeness (see \cite[p.149 Corollary 2]{SchH}), we know that $S:G\to G$ is $\varrho$-continuous, then so does $T=S+T(0)$.

{\rm iii)}. It follows from {\rm i)} and {\rm ii)} directly.
%
%
%
\end{proof}
\begin{corollary}\label{sublinear}
Suppose that $T:{\rm conv}(X)\rightarrow {\rm conv}(X)$ is a fully order preserving mapping.
Then,
\begin{itemize}
\item[i)] the restriction $T|_{{\rm C}_{{\rm conv}(X)}}: {\rm C}_{{\rm conv}(X)}\to {\rm C}_{{\rm conv}(X)}$ is bounded $\varrho$-continuous affine mapping with $T\big{(}{\rm aff}(X)\big{)}={\rm aff}(X)$;
\item[ii)] $S=T|_G-T(0)$ is a $\varrho$-continuous linear operator on $X^*\oplus\mathbb R$.
\end{itemize}
\end{corollary}
\begin{proof}
By Theorem \ref{thm}, it suffices to note that for each continuous convex function $f$, then $Tf$ is also continuous (see, the proof of Corollary \ref{bounded rho continuity}), and note that ${\rm C}_{{\rm conv}(X)}$ is bounded sup-completed and $\varrho$-closed, which admits the perfect class ${\rm aff}(X)=X^*\oplus\R$.
\end{proof}

\begin{corollary}
Suppose $T:{\rm subl}(X)\rightarrow {\rm subl}(X)$ is a fully order preserving mapping.
Then,
\begin{itemize}
\item[i)] the restriction $T|_{{\rm C}_{{\rm subl}(X)}}: {\rm C}_{{\rm subl}(X)}\to {\rm C}_{{\rm subl}(X)}$ is bounded $\varrho$-continuous affine mapping with $TX^*=X^*$;
\item[ii)] $S= T|_G-T(0)$ is a $w^*$-continuous linear mapping on $X^*$.
\end{itemize}
\end{corollary}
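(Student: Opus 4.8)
The plan is to deduce both statements directly from Theorem \ref{thm}, applied to the cone $C = C_{{\rm subl}(X)}$ of continuous lsc sublinear functions together with the sup-generating class $G = X^*$; this parallels the preceding corollary for ${\rm conv}(X)$, with $X^*$ now playing the role that ${\rm aff}(X)$ played there. First I would check that $C_{{\rm subl}(X)}$ meets every structural hypothesis of Theorem \ref{thm}. By Proposition \ref{sup-complete} ii) it is bounded sup-complete, by Proposition \ref{downward} i) it is downward with respect to itself, and by definition it consists of continuous convex functions. By Proposition \ref{sup-generating class} ii), $X^*$ is a perfect (hence $\varrho$-closed and pure) sup-generating class of ${\rm subl}(X)$, and the same $X^*$ generates the continuous subcone. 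Crucially, under the identification ${\rm aff}(X) = X^*\oplus\mathbb R$ we have $X^* = X^*\oplus\{0\}$, which is a genuine $w^*$-closed linear subspace (closed under all real scalings, unlike ${X^*}^+$ or $|X^*|$); this is precisely what Theorem \ref{thm} ii) will require.

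Second, I would verify that $T$ restricts to a self-map of $C_{{\rm subl}(X)}$, i.e. that $Tf$ is continuous whenever $f$ is. This is exactly Case ii) in the proof of Corollary \ref{bounded rho continuity}: were some continuous $f$ to satisfy $Tf\notin C_{{\rm subl}(X)}$, then $Tf$ would be $+\infty$ along a ray, and the maximality argument via the $\vee$-operation with a suitable $\delta_E$ would force $T(f\vee\delta_E)$ to be the maximum element $\delta_0$ of the cone while $f\vee\delta_E$ is not maximal, contradicting that $T$ is fully order preserving.

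With these hypotheses secured, Theorem \ref{thm} i) yields at once that $X^*$ is perfect, that $TX^* = X^*$, and that $T|_{C_{{\rm subl}(X)}}$ is a bounded $\varrho$-continuous affine mapping, which is statement i). For statement ii), since $G = X^* = X^*\oplus\{0\}$ is a $w^*$-closed subspace of $X^*\oplus\mathbb R$, Theorem \ref{thm} ii) gives directly that $S\equiv T|_{X^*} - T(0)$ is a $w^*$-continuous linear mapping on $X^*$. I expect the only step requiring genuine argument to be the continuity-preservation reduction (the Case ii) analysis); everything else is a direct citation of Theorem \ref{thm} with $G = X^*$ in place of the full ${\rm aff}(X)$ used in the preceding corollary.
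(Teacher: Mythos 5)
Your proposal is correct and follows essentially the same route as the paper: the paper states this corollary without a separate proof precisely because it is the verbatim analogue of the preceding corollary for ${\rm conv}(X)$, i.e.\ one applies Theorem \ref{thm} to $C=C_{{\rm subl}(X)}$ with the perfect, $w^*$-closed subspace $G=X^*$ as sup-generating class, after checking via Case ii) of the proof of Corollary \ref{bounded rho continuity} that $T$ preserves continuity. Your identification of the continuity-preservation step as the only point needing genuine argument, and of $X^*=X^*\oplus\{0\}$ being a $w^*$-closed subspace as the hypothesis enabling part ii), matches the paper's intent exactly.
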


\begin{proof}
The proof is complete analogous to the proof of Corollary \ref{sublinear}. By Theorem \ref{thm} it suffices to prove that $T$ maps ${\rm C}_{{\rm subl}(X)}$ onto itself. And this follows from the proof of Corollary \ref{bounded rho continuity} directly. Furthermore, by Proposition \ref{sup-generating class}, we have that $X^{*}$ is a $\varrho$-closed (with respect to ${\rm C}_{{\rm subl}(X)}$) perfect class of ${\rm C}_{{\rm subl}(X)}$, then, by Theorem \ref{thm}, it follows that $T(X^{*})=X^{*}$ such that $T$ is $w^{*}$-to-$w^{*}$ continuous.
\end{proof}

\begin{theorem}
Let $C\in\mathfrak{C}(X)$ and $T:C\rightarrow C$ be a fully order preserving mapping.
Assume that $C$ admits a $\varrho$-closed pure class $G\in\mathfrak{G}_{3}$, i.e. $G\subseteq{X^*}^+$. Then $G$ is perfect such that $TG=G$ and $T(0)=0$.
\end{theorem}
\begin{proof}
Note for every $\varphi\in X^*$,
\begin{equation}\label{structure}
B_{\varphi^+}(G)=\{\lambda\varphi^+: 0\leq\lambda\leq1\}.
\end{equation}
Since $G\subseteq{X^*}^+$ is $\varrho$-closed and pure, it is not difficult to see that $G$ is atomic.  By Proposition \ref{restriction}, $T|_G$ is fully order preserving with $TG=G$. To see that $T(0)=0$, it suffices to note that if $C$ admits a perfect class $G\subseteq \mathfrak{G}_{3}=X^{*+}$, then $C$ is a subclass of Minkowski functionals (i.e. ${\rm mink}(X)$) and $0$ is the smallest element in ${\rm mink}(X)$. Then, by the fact that $T$ is a fully order preserving mapping, we have $T(0)=0$.
\end{proof}

\section{Infinite dimensional version of the ``Artstein-Avidan-Milman'' theorem }
In this section, we shall show an exact infinite dimensional version of the ``Artstein-Avidan-Milman'' theorem for fully order reversing mappings.
We begin with the following property.
\begin{proposition}\label{infiniteversion}
Suppose that $X$ and $Y$ are two Banach spaces. Then the following statements are equivalent:
\begin{itemize}
\item[i)] there exists a fully order preserving mapping ${\rm conv}(X)\rightarrow{\rm conv}(Y)$;
\item[ii)] $X$ is isomorphic to $Y$.
\end{itemize}

\end{proposition}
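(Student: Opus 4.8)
The plan is to prove the two implications separately, with the direction (ii)$\Rightarrow$(i) being the easy construction and (i)$\Rightarrow$(ii) being where the real work lies. For (ii)$\Rightarrow$(i), suppose $V:X\to Y$ is a linear isomorphism. I would simply define $T:{\rm conv}(X)\to{\rm conv}(Y)$ by $(Tf)(y)=f(V^{-1}y)$ for all $y\in Y$. Since $V^{-1}$ is a linear homeomorphism, precomposition preserves convexity, properness and lower semicontinuity, so $Tf\in{\rm conv}(Y)$; the inverse is given by precomposition with $V$, so $T$ is a bijection; and because $V^{-1}$ is onto, $f\ge g$ pointwise on $X$ is equivalent to $Tf\ge Tg$ pointwise on $Y$. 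Hence $T$ is fully order preserving.

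For the substantive direction (i)$\Rightarrow$(ii), the strategy is to run the same machinery developed in the earlier sections, now for a mapping between the two (a priori different) cones ${\rm conv}(X)$ and ${\rm conv}(Y)$ rather than for a self-map. First I would observe that the continuity results (Theorem \ref{continuity theorem} and its corollaries) and the affinity/sup-generating results (Lemma \ref{restriction}, Lemma \ref{generating}, Theorem \ref{thm}) only used that the source and target are bounded sup-complete downward cones admitting a perfect sup-generating class inside the affine functions; none of the arguments truly required source and target to coincide. Consequently a fully order preserving $T:{\rm conv}(X)\to{\rm conv}(Y)$ restricts to a fully order preserving bijection $T|_{{\rm aff}(X)}:{\rm aff}(X)\to{\rm aff}(Y)$, and after subtracting $T(0)$ the resulting map $S:X^*\oplus\mathbb R\to Y^*\oplus\mathbb R$ is a $w^*$-continuous linear isomorphism (by the analogue of Theorem \ref{thm} ii) and the fundamental theorem of affine geometry, which is exactly what the affinity argument in Theorem \ref{thm} invokes). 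This gives a $w^*$-$w^*$-continuous linear isomorphism between $X^*\oplus\mathbb R$ and $Y^*\oplus\mathbb R$.

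The remaining step is to extract from the isomorphism $X^*\oplus\mathbb R\cong Y^*\oplus\mathbb R$ an isomorphism between $X$ and $Y$ themselves. Here I would use that $S$ must preserve the ordered/orderless structure and the distinguished constant direction: the constants $\{0\oplus r\}$ form the unique one-dimensional subspace consisting of comparable elements (the order on ${\rm aff}(X)$ remembers which functionals are constants, since $c\le c'$ as affine functions iff $\varphi=\psi$ and $c\le c'$), so $S$ carries the $\mathbb R$-summand onto the $\mathbb R$-summand and therefore induces a $w^*$-continuous linear isomorphism $U^*:X^*\to Y^*$ on the quotient by constants. A $w^*$-$w^*$-continuous linear isomorphism between dual spaces is the adjoint of a (norm-continuous) linear isomorphism $U:Y\to X$ between the preduals, whence $X\cong Y$. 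The main obstacle I anticipate is this last point: one must verify carefully that the induced map on $X^*$ is genuinely weak$^*$-to-weak$^*$ bicontinuous (not merely weak$^*$ continuous in one direction) so that it is an adjoint of a bounded operator with bounded inverse; this is where boundedness from the uniform Lipschitz control in the definition of \textbf{bounded} sets, together with the bounded $\varrho$-continuity of both $T$ and $T^{-1}$, has to be used to guarantee that $U$ and $U^{-1}$ are both bounded.
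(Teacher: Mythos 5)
Your proposal is correct and follows essentially the same route as the paper: the easy direction is the precomposition construction, and for (i)$\Rightarrow$(ii) the paper likewise restricts $T$ to the perfect sup-generating class ${\rm aff}(X)$, shows it maps onto ${\rm aff}(Y)$, invokes the Theorem \ref{thm} machinery to get a $\varrho$-to-$\varrho$ (i.e.\ $w^*$-to-$w^*$) continuous affine isomorphism $X^*\oplus\mathbb R\rightarrow Y^*\oplus\mathbb R$, and then extracts a $w^*$-continuous isomorphism $V:X^*\rightarrow Y^*$ which is the adjoint of an isomorphism $U:Y\rightarrow X$. Your extra care about the constant direction being preserved and about bicontinuity of $V$ only makes explicit what the paper leaves implicit (the latter point is in fact automatic: a $w^*$-to-$w^*$ continuous linear bijection of dual spaces is an adjoint $U^*$, and bijectivity of $U^*$ forces $U$ to be an isomorphism by standard duality).
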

\begin{proof}
Clearly, it suffices to show ${\rm i)} \Longrightarrow {\rm ii)}$.

Suppose that $T:{\rm conv}(X)\rightarrow{\rm conv}(Y)$ is a fully order preserving mapping.
Note that ${\rm aff}(X)$ (resp. ${\rm aff}(Y)$) is the perfect sup-generating class of  ${\rm conv}(X)$ (resp. ${\rm conv}(Y)$). Since $T$ is fully order preserving, it maps ${\rm aff}(X)$ onto ${\rm aff}(Y)$. Indeed, given $\phi+r\in{\rm aff}(X)$, since $B_{\phi+r}\coloneqq\{f\in{\rm conv}(Y): f\leq\phi+r\}=\{\phi+s: r\geq s\in\mathbb R\}$ is well-ordered, then  $T(B_{\phi+r})$ is again ordered. The only possible case is $T({\phi+r})\in{\rm aff}(Y)$. Since $T^{-1}$ is also fully order preserving, $T_{{\rm aff}(X)}:{\rm aff}(X)\rightarrow {\rm aff}(Y)$ (the restriction of $T$ to  ${\rm aff}(X)$) is again fully order preserving. By an argument similar to the proof of Theorem \ref{thm},
\[
T_{{\rm aff}(X)}: {\rm aff}(X)\rightarrow {\rm aff}(Y)
\]
is a $\varrho$-to-$\varrho$-continuous  affine isomorphism. Note
\[
{\rm aff}(X)=X^*\oplus\mathbb R,\;\;{\rm aff}(Y)=Y^*\oplus\mathbb R,
\]
and note the $\varrho$-to-$\varrho$-continuity is equivalent to the $w^*$-to-$w^*$ continuity.
Then there is a $w^*$-to-$w^*$ continuous isomorphism $V: X^*\rightarrow Y^*$, which implies that $V$ is an adjoint operator. Hence, there exists an isomorphism $U:Y\to X$ so that $U^{*}=V$.
\end{proof}
The following theorem is an analogous, or, a slight generalization of ``Artstein-Avidan-Milman-Iusem-Reem-Svaiter'' representation theorem of fully order preserving mappings. However, our proof is simpler.
\begin{theorem}\label{presrving}
Suppose that $X$ and $Y$ are two Banach spaces. Then for every fully order preserving mapping $T:{\rm conv}(X)\rightarrow{\rm conv}(Y)$, there exists an isomorphism $U:Y\rightarrow X$, $x_0\in X$,
$\varphi\in Y^*$, $\alpha>0$ and $r_0\in\mathbb R$ so that
\[
T(f)(y)=\alpha f(Uy+x_0)+\langle\varphi,y\rangle+r_0,\;\;f\in{\rm conv}(X),\;y\in Y.
\]
\end{theorem}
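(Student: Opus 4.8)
The plan is to reduce the representation of $T$ on all of ${\rm conv}(X)$ to its behavior on the perfect sup-generating class ${\rm aff}(X)=X^*\oplus\mathbb{R}$, where the structure is forced to be affine, and then to propagate this structure back to the whole cone using the fact that $T$ commutes with suprema. First I would invoke Proposition \ref{infiniteversion} and the argument in its proof: $T$ restricts to a fully order preserving, $\varrho$-to-$\varrho$ continuous affine isomorphism
$$T_{{\rm aff}}:{\rm aff}(X)=X^*\oplus\mathbb{R}\longrightarrow{\rm aff}(Y)=Y^*\oplus\mathbb{R}.$$
Writing $S\equiv T_{{\rm aff}}-T_{{\rm aff}}(0)$, the argument of Theorem \ref{thm} ii) shows $S$ is a $w^*$-to-$w^*$ continuous linear isomorphism between $X^*\oplus\mathbb{R}$ and $Y^*\oplus\mathbb{R}$, and $T_{{\rm aff}}(0)$ is a fixed element $\psi_0+r_0\in{\rm aff}(Y)$.

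Next I would pin down the \emph{form} of the linear isomorphism $S$. The key structural constraint is that $T$ preserves the order, hence preserves the distinguished ``constant direction'': the constant function $c$ is the infimum of those affine functions dominating it in a way that is order-theoretically invariant, so $S$ must carry the $\mathbb{R}$-summand $\{0\}\oplus\mathbb{R}$ into $\{0\}\oplus\mathbb{R}$. Concretely, the constants are exactly the affine functions $u$ for which the order interval structure around $u$ is that of a one-parameter chain in every direction; $T$ preserves this, forcing $S(0\oplus 1)=\alpha(0\oplus 1)$ for some scalar, and order preservation forces $\alpha>0$. Consequently $S$ has block-triangular form with respect to the splitting $X^*\oplus\mathbb{R}$, i.e. there is a $w^*$-to-$w^*$ continuous linear isomorphism $V:X^*\to Y^*$, a functional $\varphi\in Y^*$ and the scalar $\alpha>0$ with
$$S(x^*\oplus t)=(Vx^*)\oplus(\langle\varphi,\cdot\rangle\text{-part}+\alpha t).$$
Since $V$ is $w^*$-to-$w^*$ continuous, $V=U^*$ for a unique isomorphism $U:Y\to X$. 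Unwinding this on an affine function $\phi+c\in{\rm aff}(X)$ yields, after absorbing the translation coming from $T_{{\rm aff}}(0)$,
$$\big(T(\phi+c)\big)(y)=\alpha(\phi+c)(Uy+x_0)+\langle\varphi,y\rangle+r_0$$
for suitable $x_0\in X$; this is precisely the claimed formula restricted to affine functions.

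Finally I would extend the formula from ${\rm aff}(X)$ to all of ${\rm conv}(X)$ by a sup-envelope argument. Every $f\in{\rm conv}(X)$ satisfies $f=\sup A_f$ with $A_f\subseteq{\rm aff}(X)$, and by Lemma \ref{continuity} i) together with the bounded $\varrho$-continuity of $T$ (Corollary \ref{bounded rho continuity}), $T$ commutes with such suprema: $Tf=\sup_{u\in A_f}Tu$. Since the affine-to-affine map $u\mapsto Tu$ is the composition of the fixed precomposition $g\mapsto g\circ(U\cdot+x_0)$ with multiplication by $\alpha$ and the addition of the fixed affine term $\langle\varphi,\cdot\rangle+r_0$ — and since both precomposition by an affine bijection and multiplication by $\alpha>0$ commute with taking pointwise suprema — I can pull all three operations outside the supremum to obtain
$$(Tf)(y)=\alpha\Big(\sup_{u\in A_f}u\Big)(Uy+x_0)+\langle\varphi,y\rangle+r_0=\alpha f(Uy+x_0)+\langle\varphi,y\rangle+r_0,$$
as desired. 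I expect the main obstacle to be the second step: rigorously forcing the block-triangular structure of $S$, i.e. showing that order preservation alone compels $S$ to fix the constant direction (with a strictly positive scalar) rather than mixing constants into $X^*$; this is where the order-theoretic characterization of the chains $B_{\phi+r}=\{\phi+s:s\le r\}$ from the proof of Proposition \ref{infiniteversion} must be used to identify the constant summand invariantly, and care is needed because in infinite dimensions one cannot simply appeal to finite-dimensional coordinate arguments.
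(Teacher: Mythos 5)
Your proposal is correct and follows essentially the same route as the paper: restrict to the perfect sup-generating class ${\rm aff}(X)=X^*\oplus\mathbb R$, obtain a $\varrho$-continuous affine bijection onto ${\rm aff}(Y)$ via Proposition \ref{infiniteversion} and Theorem \ref{thm}, show the linear part $S$ fixes the constant direction with $S(1)=\alpha>0$ and restricts (after projecting off $\{0\}\oplus\mathbb R$) to a $w^*$-to-$w^*$ isomorphism $X^*\to Y^*$ that pre-dualizes to $U:Y\to X$, and then propagate to all of ${\rm conv}(X)$ by the sup-envelope argument of Lemma \ref{continuity}. The only cosmetic slip is in your block-triangular formula, where the cross-term sending $x^*$ into the $\mathbb R$-component is a $w^*$-continuous functional on $X^*$, hence of the form $\langle x^*,z\rangle$ for some $z\in X$ (this produces the translation $x_0$), and should not be written with the $\varphi\in Y^*$ that comes from $T(0)$.
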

\begin{proof}
Suppose that $T:{\rm conv}(X)\rightarrow{\rm conv}(Y)$ is a fully order preserving mapping. Then by the proof of Proposition \ref{infiniteversion}, we obtain that
\begin{itemize}
\item[i)] $T$ maps ${\rm aff}(X)$ onto ${\rm aff}(Y)$;
\item[ii)] $S=T-T(0)$ is $\varrho$-to-$\varrho$ continuous fully order preserving linear mapping with $S(\mathbb R)=\mathbb R,$ and with $S(1)\coloneqq\alpha>0$.
\end{itemize}

Since $X^*\subseteq {\rm aff}(X)$ is orderless and $w^*$-closed, $S(X^*)$ is again an orderless $w^*$-closed hyperplane of ${\rm aff}(Y)=Y^*\oplus\mathbb R.$

Therefore, there exists $y_0+s_0\in Y\oplus\mathbb R$ so that
\[
S(X^*)=\{y^*+s\in Y^*\oplus\mathbb R: \langle y^*,y_0\rangle+sr_0=0\}.
\]
We claim $s_0\neq0$. Otherwise, we have $\mathbb R\subseteq S(X^*)$. This contradicts to that $S(X^*)$ is orderless.
Thus,
\[
S(X^*)=\{y^*-\langle y^*,y_0\rangle/s_0: y^*\in Y^*\}.
\]
Let $P: Y^*\oplus\mathbb{R}\rightarrow Y^*$ be the projection along $\{0\}\oplus\mathbb R$.
Thus,
\[
W(x^*)=PS(x^*),\;\;x^*\in X^*
\]
is a linear bijection from $X^*$ onto $Y^*$. Indeed, let $x^*_1 \not=x^*_2$, then $S(x^*_1)$ must be incomparable with $S(x^*_2)$. Hence, it follows that $W(x^*_1)\not =W(x^*_2)$. For any $y^*\in Y^*$, there exists $x^*+r\in X^* \oplus \mathbb{R}$ so that $S(x^*+r)=y^*$, which yields that $W$ is surjective. The $w^*$-to-$w^*$ continuity of $W$ follows from Lemma \ref{continuity} directly.
Therefore, there is isomorphism $V:Y\rightarrow X$ so that $V^*=W$.

Given $f\in {\rm conv}(X)$, let
\[
B_{f}\big{(}{\rm aff}(X)\big{)}=\{x^*+r\leq f: x^*+r\in{\rm aff}(X)\}.
\]
Then it follows from Lemma \ref{continuity} for all $y\in Y$
\begin{eqnarray}
(Sf)(y)&=&\big{(}S\sup\limits_{h\in B_{f}({\rm aff}(X))}h\big{)}(y)=\sup\limits_{h\in B_f({\rm aff}(X))}(Sh)(y)\\\nonumber
&=&\sup\{\langle Sx^*,y\rangle+\alpha r: x^*+r\leq f\}\\\nonumber
&=&\sup\{\langle Wx^*,y-y_0/s_0\rangle+\alpha r: x^*+r\leq f\}\\\nonumber
&=&\sup\{\langle x^*,V(y-y_0/s_0)\rangle+\alpha r: x^*+r\leq f\}\\\nonumber
&=&\alpha\sup\{\langle x^*,\alpha^{-1}V(y-y_0/s_0)\rangle+r: x^*+r\leq f\}\\\nonumber
&=&\alpha f(Uy+x_0),
\end{eqnarray}
where $U=\alpha^{-1} V$ and $x_0=-U\big{(}\frac{y_{0}}{s_{0}}\big{)}$. Let $T(0)=\varphi+r_0$. Then
\[
(Tf)(y)=\alpha f(Uy+x_0)+\langle\varphi,y\rangle+r_0,\;\;f\in{\rm conv}(X),\;y\in Y.
\]
\end{proof}
Recall that for a Banach space $X$, ${\rm conv}_{*}(X^*)$ denotes the cone of all extended real-valued $w^*$-l.s.c. convex functions defined on $X^*$.
\begin{theorem}\label{JFA}
Let $X$ be a Banach space. Then there is a fully order preserving mapping from ${\rm conv}(X)$ to ${\rm conv}_{*}(X^*)$ if and only if the following two conditions hold:
\begin{itemize}
\item[i)] $X$ is reflexive;
\item[ii)] there is a linear isomorphism from $X$ to $X^*$.
\end{itemize}
\end{theorem}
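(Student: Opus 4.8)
The plan is to prove the two implications separately, treating the ``only if'' direction as the substantive one and reading off the ``if'' direction from the constructions already available.

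For the ``if'' direction I would argue as follows. If $X$ is reflexive, then on $X^*$ the $w^*$-topology coincides with the weak topology, so for convex functions $w^*$-lower semicontinuity, weak lower semicontinuity and norm lower semicontinuity all agree; hence $\mathrm{conv}_*(X^*)=\mathrm{conv}(X^*)$. Given a linear isomorphism $U:X\to X^*$, the assignment $f\mapsto f\circ U^{-1}$ is then a fully order preserving bijection of $\mathrm{conv}(X)$ onto $\mathrm{conv}(X^*)=\mathrm{conv}_*(X^*)$ (this is the elementary half of Proposition \ref{infiniteversion}), which settles this implication.

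For the ``only if'' direction, suppose $T:\mathrm{conv}(X)\to\mathrm{conv}_*(X^*)$ is fully order preserving. The first step is to identify the correct perfect sup-generating classes: $\mathrm{conv}(X)$ has perfect sup-generating class $\mathrm{aff}(X)=X^*\oplus\mathbb{R}$, while the perfect sup-generating class of $\mathrm{conv}_*(X^*)$ is the cone of $w^*$-continuous affine functions on $X^*$, which is canonically $X\oplus\mathbb{R}$, since the $w^*$-continuous linear functionals on $X^*$ are precisely the elements of the predual $X$ and every $w^*$-lsc proper convex function is the supremum of its $w^*$-continuous affine minorants. I would then rerun the argument of Proposition \ref{infiniteversion} and Theorem \ref{presrving} verbatim, with $\mathrm{aff}(Y)$ replaced by $X\oplus\mathbb{R}$: this yields that $T$ carries $X^*\oplus\mathbb{R}$ onto $X\oplus\mathbb{R}$ as a $\varrho$-to-$\varrho$ continuous affine isomorphism, and after subtracting $T(0)$ and projecting off the constants one extracts a linear bijection $W:X^*\to X$. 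The decisive point is the identification of the two $\varrho$-topologies: on $X^*\subseteq\mathrm{aff}(X)$ it is $\sigma(X^*,X)$ (the $w^*$-topology), whereas on $X\subseteq\mathrm{conv}_*(X^*)$ pointwise convergence on $X^*$ is exactly $\sigma(X,X^*)$ (the weak topology of $X$). Consequently $W$ is continuous from $(X^*,w^*)$ to $(X,\mathrm{weak})$.

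The last step converts this continuity into reflexivity by duality. Since $w^*$ is coarser than the weak topology on $X^*$, the map $W$ is in particular weak-to-weak continuous, hence bounded by the closed graph theorem, and being a bounded bijection it is an isomorphism by the open mapping theorem; this already gives $X\cong X^*$, namely condition (ii). Moreover $w^*$-to-weak continuity is equivalent to the inclusion $W^*(X^*)\subseteq X$ for the adjoint $W^*:X^*\to X^{**}$, because for each $\xi\in X^*$ the functional $\eta\mapsto\langle\xi,W\eta\rangle=(W^*\xi)(\eta)$ is $w^*$-continuous on $X^*$ exactly when $W^*\xi$ lies in the canonical image of $X$. As $W$ is an isomorphism, $W^*$ is an isomorphism \emph{onto} $X^{**}$, so $X^{**}=W^*(X^*)\subseteq X\subseteq X^{**}$, forcing $X=X^{**}$; thus $X$ is reflexive and condition (i) holds. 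I expect the main obstacle to be the careful bookkeeping of these two topologies and the verification that the order-theoretic machinery of the earlier sections transfers intact to the cone $\mathrm{conv}_*(X^*)$ with $X\oplus\mathbb{R}$ in the role of $\mathrm{aff}$; once the $\varrho$-topology on the target class is correctly recognized as the weak topology of the predual $X$, the adjoint of $W$ is forced into $X$ and reflexivity follows, while everything else is a routine transcription together with standard duality facts.
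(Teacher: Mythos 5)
Your proposal is correct and follows essentially the same route as the paper: the sufficiency is handled identically, and the necessity argument runs the machinery of Proposition \ref{infiniteversion} and Theorem \ref{thm} on the perfect sup-generating classes $X^*\oplus\mathbb{R}$ and $X\oplus\mathbb{R}$ to produce a $\varrho$-to-$\varrho$ (i.e.\ $w^*$-to-weak) continuous linear bijection $X^*\to X$, exactly as in the paper's proof. The only difference is that you spell out the final duality step --- that $w^*$-to-weak continuity of the isomorphism $W:X^*\to X$ forces $W^*(X^*)\subseteq X$ and hence $X=X^{**}$ --- which the paper merely asserts in one line.
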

\begin{proof}
Sufficiency. Since $X$ is reflexive, we have ${\rm conv}_{*}(X^*)={\rm conv}(X^*).$ Let  $U: X\rightarrow X^*$ be an isomorphism. Then
\[
(Tf)(x^*)=f(U^{-1}x^*),\;f\in{\rm conv}(X),\;x^*\in X^*,
\]
defines a fully order preserving mapping $T: {\rm conv}(X)\rightarrow{\rm conv}(X^*) $.

Necessity. Suppose that $T: {\rm conv}(X)\rightarrow {\rm conv}_{*}(X^*)$  be a fully order preserving mapping. 
Note that ${\rm aff}_{*}(X^{*})$ (resp. ${\rm aff}(X)$) is perfect class of ${\rm conv}_{*}(X^{*})$ (resp. ${\rm conv}(X)$), where ${\rm aff}_{*}(X^{*})$ is the set of all $w^{*}$-continuous affine functions on $X^{*}$. Apply with Lemma \ref{sup-generating class}, we have ${\rm aff}(X)$ is a perfect class of ${\rm conv}(X)$. It now suffices to show that ${\rm aff}_{*}(X^{*})$ is a perfect class of ${\rm conv}_{*}(X^{*})$. Indeed, ${\rm aff}_{*}(X^{*})$ can be identified by $X\oplus \R$, which yields the purity of ${\rm aff}_{*}(X^{*})$. By the Hahn-Banach theorem, we have ${\rm aff}_{*}(X^{*})$ is a sup-generating class of ${\rm conv}_{*}(X^{*})$. It now suffices to show the $\varrho$-closeness of ${\rm aff}_{*}(X^{*})$. Indeed, assume that $\{f_{\alpha}\}_{\alpha\in\triangle}$ be a net of ${\rm aff}_{*}(X^{*})$ so that $f_{\alpha}=x_{\alpha}+r_{\alpha}\xrightarrow{\varrho} f\in {\rm C}_{{\rm conv}_{*}(X^{*})}$. Then, $f$ is affine, which implies that $f-f(0)$ is a $w^{*}$-continuous linear functional. Hence, $f-f(0)\in X$, that is $f=x+r$ for some $x\in X$ and $r\in \R$. Therefore, the perfectness of ${\rm aff}_{*}(X^{*})$ has been proven.

By an argument similar to the proof of Theorem \ref{thm}, we obtain that $T|_{{\rm aff}(X)}$ is a fully order preserving mapping, which is $\varrho$-to-$\varrho$ continuous (that is, pointwise-to-pointwise continuous) affine mapping from ${\rm aff}(X)$ to ${\rm aff}^*(X^*)$.  Let $S=T-T(0)$. Then
\[
S|_{{\rm aff}(X)}:X^*\oplus\mathbb R={\rm aff}(X)\rightarrow {\rm aff}^*(X^*)=X\oplus\mathbb R
\]
is a fully order preserving  $\varrho$-to-$\varrho$ continuous linear mapping. This entails that $X^*$ is isomorphic to $X$.
Note the $\varrho$-to-$\varrho$ continuity is equivalent to $w^*$-to-$w$ continuity in this setting. Then we obtain that $X$ is reflexive. Indeed, $T:X{^*}\to X$ is a linear isomorphism which is $w^{*}$-to-$w$ continuous and by the Alaoglu theorem, we have that $T(B_{X^{*}})$ is a weakly compact subset of $X$. By the open mapping theorem, it follows that there exists $\lambda>0$ so that $\lambda B_{X}\subseteq T(B_{X^{*}})$, which implies that $B_{X}$ is weakly compact, hence, yields the reflexivity of $X$.
\end{proof}
\begin{corollary}\label{reversing}
Suppose that $X$ is a Banach space. Then there is a fully order reversing mapping from ${\rm conv}(X)$ to itself if and only if $X$ is reflexive and isomorphic to its dual $X^*$.

\end{corollary}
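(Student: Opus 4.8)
The plan is to reduce the statement to Theorem \ref{JFA} by composing with the Fenchel transform, exploiting the elementary fact that precomposing or postcomposing an order reversing bijection with another order reversing bijection produces an order preserving one. The first step I would carry out is to record that the Fenchel transform $\mathcal F: {\rm conv}(X)\to{\rm conv}_*(X^*)$ is itself a \emph{fully order reversing bijection}. That it reverses order is immediate from $(\mathcal Ff)(x^*)=\sup_{x\in X}\{\langle x^*,x\rangle-f(x)\}$, since replacing $f$ by a pointwise larger function only decreases each supremum, so $f\geq g\iff\mathcal Ff\leq\mathcal Fg$. That $\mathcal F$ is a bijection whose inverse is again order reversing is the content of the Fenchel--Moreau biconjugation theorem on the cone of lsc proper convex functions, together with the standard observation that $w^*$-lower semicontinuity is precisely the condition making ${\rm conv}_*(X^*)$ the exact range of $\mathcal F$.

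The second step is to establish the equivalence: \emph{there is a fully order reversing mapping $T:{\rm conv}(X)\to{\rm conv}(X)$ if and only if there is a fully order preserving mapping ${\rm conv}(X)\to{\rm conv}_*(X^*)$.} For the forward direction, given such a $T$, I would consider $\mathcal F\circ T:{\rm conv}(X)\to{\rm conv}_*(X^*)$; it is a bijection as a composite of bijections, and for any $f,g$ one has $f\geq g\iff Tf\leq Tg\iff\mathcal F(Tf)\geq\mathcal F(Tg)$, so $\mathcal F\circ T$ is fully order preserving. For the converse, given a fully order preserving $S:{\rm conv}(X)\to{\rm conv}_*(X^*)$, the composite $\mathcal F^{-1}\circ S:{\rm conv}(X)\to{\rm conv}(X)$ is a bijection and satisfies $f\geq g\iff Sf\geq Sg\iff \mathcal F^{-1}(Sf)\leq\mathcal F^{-1}(Sg)$, hence is fully order reversing.

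The final step is simply to invoke Theorem \ref{JFA}, which characterises existence of a fully order preserving mapping ${\rm conv}(X)\to{\rm conv}_*(X^*)$ as being equivalent to the conjunction of reflexivity of $X$ and existence of a linear isomorphism $X\to X^*$. Chaining this with the equivalence of the previous step yields the corollary at once.

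As for where the difficulty lies: essentially all the genuine mathematical content has been front-loaded into Theorem \ref{JFA}, so the composition argument is routine once the order reversing bijectivity of $\mathcal F$ is in place. The only point requiring care is the bookkeeping of target cones. The Fenchel transform naturally lands in the $w^*$-lsc cone ${\rm conv}_*(X^*)$, and this is exactly the target cone appearing in Theorem \ref{JFA}, so I would make sure to phrase the whole equivalence at the level of ${\rm conv}_*(X^*)$ and \emph{not} prematurely identify it with ${\rm conv}(X^*)$; reflexivity is an \emph{output} of Theorem \ref{JFA}, not an input to the reduction. Keeping the directions of the two bijections $\mathcal F$ and $\mathcal F^{-1}$ aligned with the source/target conventions is the one place I would double-check, but I anticipate no substantive obstacle.
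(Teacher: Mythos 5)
Your proposal is correct and follows essentially the same route as the paper: the paper's proof is exactly the observation that $T:{\rm conv}(X)\to{\rm conv}(X)$ is fully order reversing if and only if $\mathcal F\circ T:{\rm conv}(X)\to{\rm conv}_*(X^*)$ is fully order preserving, followed by an appeal to Theorem \ref{JFA}. Your write-up merely makes explicit the bijectivity of $\mathcal F$ via Fenchel--Moreau and the converse composition with $\mathcal F^{-1}$, both of which the paper leaves implicit.
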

\begin{proof}
By Theorem \ref{JFA}, it suffices to note that if $T: {\rm conv}(X)\rightarrow {\rm conv}(X)$ is fully order reversing if and only if $\mathcal FT: {\rm conv}(X)\rightarrow {\rm conv}_{*}(X^*)$
is fully order preserving, where $\mathcal F: {\rm conv}(X)\rightarrow {\rm conv}_{*}(X^*)$ denotes the Fenchel transform, which is fully order reversing.
\end{proof}
\begin{remark}
Clearly, for every Hilbert space $H$, there is a fully order reversing mapping of ${\rm conv}(H)$. It follows from Corollary \ref{reversing} that there are many non-Hilbert spaces satisfying the existence of fully order reversing mappings of the cone of l.s.c. convex functions defined on it. For example, let $X=L_p\times L_q$ with $1<p\neq 2<\infty$, $\frac{1}{p}+\frac{1}{q}=1$ and endows $X$ with the norm defined for $(x,y)\in X$ by
\begin{equation}\nonumber
\|(x,y)\|=\sqrt{\|x\|^2+\|y\|^2}.
\end{equation}
Then $X$ is reflexive and is isometric to $X^*$, which is not linearly isomorphic to a Hilbert space.
\end{remark}

\begin{theorem}\label{selfdual}
Suppose that $X$ is a Banach space. Then for every fully order reversing mapping $T$ from ${\rm conv}(X)$ onto itself there exist isomorphism $U:X\rightarrow X^*$, $x^*_0,\;\varphi\in X^*$, $\alpha>0$ and  $r_0\in\mathbb R$ so that
\[
(Tf)(x)=\alpha(\mathcal Ff)(Ux+x^*_0)+\langle\varphi,x\rangle+r_0,\;{\rm for\;all\;}x\in X,
\]
where $\mathcal F: {\rm conv}(X)\rightarrow {\rm conv}_{*}(X^*)$ is the Fenchel transform.
\end{theorem}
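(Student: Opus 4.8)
The plan is to reduce the order-reversing problem to the order-preserving representation already established in Theorem \ref{presrving} by composing with the Fenchel transform. First I would invoke Corollary \ref{reversing}: since $T:{\rm conv}(X)\to{\rm conv}(X)$ is fully order reversing, $X$ is reflexive and linearly isomorphic to $X^*$. Reflexivity is the crucial consequence here, because it guarantees that the $w^*$-lower semicontinuity condition defining ${\rm conv}_*(X^*)$ coincides with ordinary lower semicontinuity, so that ${\rm conv}_*(X^*)={\rm conv}(X^*)$, and it lets me identify $X^{**}$ with $X$ throughout.

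Next I would record the two basic facts about the Fenchel transform $\mathcal{F}:{\rm conv}(X)\to{\rm conv}(X^*)$: it is order reversing (if $f\le g$ then $\langle x^*,x\rangle-f(x)\ge\langle x^*,x\rangle-g(x)$ pointwise, hence $\mathcal{F}f\ge\mathcal{F}g$), and, by the Fenchel--Moreau biconjugation theorem together with reflexivity, it is a bijection whose inverse is the Fenchel transform $\mathcal{F}^{-1}:{\rm conv}(X^*)\to{\rm conv}(X)$, itself order reversing. Thus $\mathcal{F}$ and $\mathcal{F}^{-1}$ are fully order reversing bijections.

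The key step is then to set $S:=T\circ\mathcal{F}^{-1}:{\rm conv}(X^*)\to{\rm conv}(X)$. As a composition of two fully order reversing bijections, $S$ is a fully order preserving bijection. Applying Theorem \ref{presrving} with $X^*$ in the role of the domain space and $X$ in the role of the codomain space, I obtain a linear isomorphism $U:X\to X^*$, a point $x_0^*\in X^*$, a functional $\varphi\in X^*$, a constant $r_0\in\mathbb{R}$ and $\alpha>0$ such that
$$S(g)(x)=\alpha\,g(Ux+x_0^*)+\langle\varphi,x\rangle+r_0,\qquad g\in{\rm conv}(X^*),\ x\in X.$$
Substituting $g=\mathcal{F}f$ (so that $\mathcal{F}^{-1}g=f$ and $S(\mathcal{F}f)=Tf$) then yields
$$(Tf)(x)=\alpha(\mathcal{F}f)(Ux+x_0^*)+\langle\varphi,x\rangle+r_0,$$
which is exactly the asserted representation.

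The genuinely delicate point is the second paragraph rather than the composition bookkeeping: one must be sure that $\mathcal{F}$ maps ${\rm conv}(X)$ \emph{onto} all of ${\rm conv}(X^*)$ (not merely into it) and that its inverse is again a transform of the same type, which is precisely where reflexivity is indispensable --- on a nonreflexive space $\mathcal{F}$ lands in the strictly smaller cone ${\rm conv}_*(X^*)$ and the argument collapses. Everything else, including the direction of the isomorphism $U:X\to X^*$ and the membership $x_0^*,\varphi\in X^*$, follows by matching the data supplied by Theorem \ref{presrving} to the roles of $X^*$ (domain) and $X$ (codomain).
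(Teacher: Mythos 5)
Your proposal is correct, and it follows the same overall strategy as the paper: invoke Corollary \ref{reversing} to obtain reflexivity and an isomorphism $X\cong X^*$ (hence ${\rm conv}_*(X^*)={\rm conv}(X^*)$), convert the order-reversing map into an order-preserving one by composing with the Fenchel transform, and then apply the representation theorem for order-preserving maps. The difference is in the decomposition. The paper first fixes an auxiliary isomorphism $E:X\to X^*$, uses it to build an order-preserving map $S:{\rm conv}(X)\to{\rm conv}(X^*)$, and then applies Theorem \ref{presrving} in the single-space form to $T\circ S^{-1}\circ\mathcal F:{\rm conv}(X)\to{\rm conv}(X)$; this forces a final unwinding computation in which the isomorphism of the statement appears as $U=E^*V$. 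You instead apply Theorem \ref{presrving} directly in its two-space form to $T\circ\mathcal F^{-1}:{\rm conv}(X^*)\to{\rm conv}(X)$, which produces the isomorphism $U:X\to X^*$, the shift $x_0^*\in X^*$ and the affine correction in one stroke, and the substitution $g=\mathcal Ff$ finishes the proof with no bookkeeping. Your route is cleaner and avoids the auxiliary $E$ entirely; what it relies on, and what you correctly flag, is that $\mathcal F$ is a bijection of ${\rm conv}(X)$ onto all of ${\rm conv}(X^*)$ with order-reversing inverse, which is exactly where Fenchel--Moreau plus reflexivity enter and which the paper uses implicitly as well.
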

\begin{proof}
Suppose that $T: {\rm conv}(X)\rightarrow {\rm conv}(X)$ is a fully order reversing mapping. Then, $\mathcal{F}T$ is a fully order preserving mapping defined from ${\rm conv}(X)$ onto ${\rm conv}_{*}(X^{*})$. Applying Theorem \ref{presrving} and Theorem \ref{JFA} we have that there exist a $w^{*}$-to-$w$ linear isomorphism $V:X^{*}\to X$, $x_{0}$, $v_{0}\in X$, $\alpha>0$ and $\beta\in \R$ such that
\begin{equation}\label{representation for combination}
(\mathcal{F}T f)(y^{*})=\alpha f(Vy^{*}+x_{0})+\langle y^{*}, v_{0}\rangle+\beta,~y^{*}\in X^{*}.
\end{equation}
Since the Fenchel transform $\mathcal{F}$ satisfies that $\mathcal{F}\mathcal{F}={\rm id}$, then, by \eqref{representation for combination}, we have the following
\begin{equation}\label{representation for mapping T}
\begin{split}
(Tf)(x)&=(\mathcal{F}\mathcal{F}Tf)(x)=\sup\big{\{}\langle y^{*},x\rangle-\alpha f(Vy^{*}+x_{0})-\langle y^{*},v_{0}\rangle-\beta: y^{*}\in X^{*} \big{\}}\\
       &=\alpha \sup\big{\{}\langle y^{*},\frac{x-v_{0}}{\alpha}\rangle-f(Vy^{*}+x_{0}): y^{*}\in X^{*}\big{\}}-\beta\\
\end{split}
\end{equation}
Note that $V$ is a linear isomorphism defined from $X^{*}$ onto $X$, then, \eqref{representation for mapping T} becomes into
\[
\begin{split}
(Tf)(x)&=\alpha\sup\big{\{}\langle V^{-1}\xi,\frac{x-v_{0}}{\alpha}\rangle-f(\xi):\xi\in X\big{\}}-\langle V^{-1}{(x_{0})},x-v_{0}\rangle-\beta\\
       &=\alpha\sup\big{\{}\langle\frac{V^{-1*}(x-v_{0})}{\alpha},\xi\rangle-f(\xi): \xi\in X\big{\}}+\langle-V^{-1}x_{0},x\rangle+(\langle V^{-1}x_{0},v_{0}\rangle-\beta)\\
\end{split}
\]
Let $U=\frac{V^{-1*}}{\alpha}$, $x_{0}^{*}=-\frac{V^{-1*}(v_{0})}{\alpha}$, $\varphi=-V^{-1}(x_{0})$ and $r_{0}=\langle V^{-1}x_{0},v_{0}\rangle-\beta$, then we get that
\[
(Tf)(x)=\alpha(\mathcal Ff)(Ux+x^*_0)+\langle\varphi,x\rangle+r_0,\;{\rm for\;all\;}x\in X.
\]
\end{proof}

\section{An extended fundamental theorem of affine geometry}
In the following two sections, we shall discuss representation of fully order preserving mappings defined on the cone ${\rm semn}(X)$ of all extended real-valued l.s.c. seminorms on a Banach space $X$. Our main approach is to convert a fully order preserving mapping defined on  ${\rm semn}(X)$  to a lattice isomorphism defined on  $\mathscr{C}_{{\rm symm},*}(X)$ (the set of all nonempty $w^*$-closed symmetric convex subsets of $X^*$). Then by using the  generalized fundamental theorem of affine geometry  we obtain a linear isomorphism.

To begin with, we recall lattices of convex sets in the following.

By a lattice $\mathfrak S$, we mean that a partially ordered set such that any two of whose elements $a$ and $b$ have a least upper bound $a\vee b$ and a greatest lower bound $a\wedge b$.

\begin{example}
We order $\mathscr{C}_{*}(X^{*})$ (the collection of $w^*$-closed convex subsets of $X^*$)
by inclusion, and define two  operations $\wedge$ and $\vee$ on $\mathscr{C}_{*}(X^{*})$ as follows.
\begin{equation}\label{latticedef}
A\wedge B=A\cap B,\;\;A\vee B=\overline{{\rm co}}^{w^*}(A\cup B),\;\;{\rm for\;all}\; A, B\in\mathscr{C}_{*}(X^{*}),
\end{equation}
where $\overline{{\rm co}}^{w^*}(A)$ denotes the $w^*$-closed convex hull of $A$.
Then $(\mathscr{C}_{*}(X^{*}),\subseteq,\wedge,\vee)$ is a lattice.
\end{example}
\begin{example} Let $V$ be a  linear space over  $\mathbb R$. $\mathscr {P}(V)$ stands for the set of all finite dimensional linear subspaces on $V$. We now define two operations $\wedge$ and $\vee$ for $A$, $B\in\mathscr {P}(V)$ as follows,
\begin{equation}\nonumber
A\wedge B=A\cap B,\;\;\;\;A\vee B={\rm span}(A\cup B).
\end{equation}
Then $(\mathscr {P}(V),\subseteq,\wedge,\vee)$ is a lattice.
\end{example}

Recall that a mapping $\varphi$
from a lattice $\mathfrak {S}$ onto itself is a lattice isomorphism if $\varphi$ is bijective and satisfies
\begin{equation}\nonumber
\varphi (A\wedge B)=\varphi (A)\wedge\varphi (B),~~~\varphi (A\vee B)=\varphi (A)\vee\varphi (B),~~~\forall A,B\in \mathfrak {S}.
\end{equation}
The next lemma implies that the fully order preserving mapping $F$ defined by \eqref{composition1} is actually
a lattice isomorphism from $\mathscr{C}_{*}(X^{*})$ onto itself.

\begin{lemma}\label{latticepreserving}
Let $(\mathfrak {S},\subseteq,\wedge,\vee)$ be a lattice. Then $\varphi: \mathfrak {S}\rightarrow\mathfrak {S}$ is a fully order preserving
mapping  if and only if it is a lattice isomorphism.
\end{lemma}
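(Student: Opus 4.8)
The plan is to recognize the statement as the elementary order-theoretic fact that a bijection preserving order in both directions between two lattices automatically respects the operations $\wedge$ and $\vee$, and conversely. Accordingly the whole argument rests on the universal characterizations of $A\wedge B$ as the greatest lower bound and $A\vee B$ as the least upper bound of $\{A,B\}$, together with the defining equivalence $A\subseteq B\iff\varphi(A)\subseteq\varphi(B)$. No special feature of the concrete lattices $\mathcal C_*(X^*)$ or $\mathscr P(V)$ is needed.

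First I would dispose of the easy implication. If $\varphi$ is a lattice isomorphism, I use the identity $A\subseteq B\iff A\wedge B=A$, valid in every lattice, and apply $\varphi$. Since $\varphi(A\wedge B)=\varphi(A)\wedge\varphi(B)$, the equality $A\wedge B=A$ forces $\varphi(A)\wedge\varphi(B)=\varphi(A)$, i.e. $\varphi(A)\subseteq\varphi(B)$; injectivity of $\varphi$ gives the reverse implication, so $A\subseteq B\iff\varphi(A)\subseteq\varphi(B)$, which is exactly full order preservation.

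For the substantive direction, assume $\varphi$ is fully order preserving, i.e. a bijection with $A\subseteq B\iff\varphi(A)\subseteq\varphi(B)$, and I would prove $\varphi(A\wedge B)=\varphi(A)\wedge\varphi(B)$ for all $A,B$; the join identity then follows by the dual argument (or by observing that $\varphi^{-1}$ is again fully order preserving). Write $C=A\wedge B$. From $C\subseteq A$ and $C\subseteq B$ the forward direction of order preservation gives $\varphi(C)\subseteq\varphi(A)$ and $\varphi(C)\subseteq\varphi(B)$, so $\varphi(C)$ is a lower bound of $\{\varphi(A),\varphi(B)\}$ and hence $\varphi(C)\subseteq\varphi(A)\wedge\varphi(B)$. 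For the opposite inclusion I invoke surjectivity: choose $E$ with $\varphi(E)=\varphi(A)\wedge\varphi(B)$. Then $\varphi(E)\subseteq\varphi(A)$ and $\varphi(E)\subseteq\varphi(B)$, and the backward direction of the equivalence yields $E\subseteq A$ and $E\subseteq B$; thus $E\subseteq A\wedge B=C$, and applying $\varphi$ gives $\varphi(A)\wedge\varphi(B)=\varphi(E)\subseteq\varphi(C)$. Combining the two inclusions gives $\varphi(A\wedge B)=\varphi(A)\wedge\varphi(B)$.

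I do not anticipate a genuine obstacle; the single point requiring care is that \emph{both} directions of the equivalence defining ``fully order preserving'' are used — the forward direction to push lower bounds forward, and the backward direction (together with surjectivity) to pull the candidate meet $\varphi(A)\wedge\varphi(B)$ back to a lower bound of $\{A,B\}$. The join case is verbatim dual, replacing greatest lower bounds by least upper bounds and the identity $A\wedge B=A$ by $A\vee B=B$, so the result is purely order-theoretic.
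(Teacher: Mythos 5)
Your proof is correct and follows essentially the same route as the paper's: the easy direction via the identity $A\subseteq B\iff A\wedge B=A$, and the substantive direction by pushing bounds forward with order preservation and pulling the candidate meet (resp.\ join) back via surjectivity and the backward implication. The only cosmetic difference is that you treat $\wedge$ first and dualize to $\vee$, while the paper does $\vee$ first.
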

\begin{proof} Sufficiency.  Suppose that $\varphi: \mathfrak {S}\rightarrow\mathfrak {S}$ is a lattice isomorphism. Let $A, B\in\mathfrak {S}$ with $A\leq B$. Note that $A\leq B\iff A\subseteq B\iff A\wedge B=A$. Then
\begin{equation}\nonumber
\varphi (A)=\varphi (A\wedge B)=\varphi (A)\wedge \varphi (B)\leq\varphi (B),
\end{equation}
i.e. $\varphi$ is order preserving. It is fully order preserving since it is bijective and since $\varphi^{-1}$ is also order preserving.

Necessity.  Suppose that $\varphi: \mathfrak {S}\rightarrow\mathfrak {S}$ is fully order preserving.
Given $A, B\in \mathfrak {S}$, we know $\varphi (A)\subseteq \varphi (A \vee B)$ and $\varphi (B)\subseteq \varphi (A \vee B)$, equivalently,
\[
\varphi (A)\vee\varphi (B)\leq\varphi (A \vee B).
\]
Since $\varphi$ is surjective, we can choose $D\in\mathfrak{S}$ so that $\varphi (A)\vee \varphi (B)=\varphi (D)$.
Therefore,  $A\subseteq D$ and $B\subseteq D$. Consequently, $A\vee B\subseteq D$, and further,
\[
\varphi (A \vee B)\subseteq \varphi (D)=\varphi (A)\vee \varphi (B).
\]
Hence,
\[
\varphi (A\vee B)=\varphi (A)\vee \varphi (B).
\]
We can show $\varphi (A\wedge B)=\varphi (A)\wedge\varphi (B)$ in the same way.
\end{proof}

Let $V$ be a  linear space over  $\mathbb R$ with $\dim(V)\geq 2$. $\mathscr {A}(V)$ stands for the set of all finite dimensional affine subspaces of $V$ and $\mathscr {P}(V)$  for  the set of all finite dimensional linear subspaces of $V$. Now, we define two lattice operations $\wedge$ and $\vee$ for  $A, B\in\mathscr {P}(V)$ as follows.
\begin{equation}\nonumber
A\wedge B=A\cap B,\;\;\;\;A\vee B={\rm span}(A\cup B).
\end{equation}
Then $(\mathscr {P}(V),\subseteq,\wedge,\vee)$ is a lattice.

The following two results (Lemma \ref{affine lem} and Theorem \ref{mainthm1} are called the fundamental theorem of affine geometry when the linear space $V$ is finite dimensional (see, for instance, \cite{Ar} and \cite[pp.56-57]{G-W}). The fundamental theorem of affine geometry also plays an important role in projective geometry.
\begin{lemma}\label{affine lem}
Suppose that $V$ is a linear space with $\dim(V)\geq2$,  and that $\alpha: V\rightarrow V$ is a mapping. If $\alpha$ fully order preserving from $\mathscr {A}(V)$ onto itself with $\alpha (0)=0$, then $\alpha$ is linear bijection.
\end{lemma}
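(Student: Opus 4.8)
The plan is to reduce this infinite-dimensional statement to the classical finite-dimensional fundamental theorem of affine geometry by restricting $\alpha$ to finite-dimensional subspaces, and then to patch the restrictions together into a single global linear map. The reading of the hypothesis I adopt is that the induced set map $A\mapsto\alpha(A)$ is a fully order preserving bijection of $(\mathscr{A}(V),\subseteq)$.

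First I would extract the purely order-theoretic consequences. Singletons are exactly the minimal nonempty elements of $(\mathscr{A}(V),\subseteq)$, so the order isomorphism carries points to points and $\alpha$ is a bijection of $V$. Since the dimension of an affine subspace equals the length of a maximal chain below it, the order isomorphism preserves dimension; in particular $\alpha$ sends lines to lines and $k$-flats to $k$-flats bijectively, whence three points are collinear if and only if their images are. Finally, because $\alpha(0)=0$ and $0$ lies in every linear subspace, the image $\alpha(W)$ of a finite-dimensional linear subspace $W$ is a finite-dimensional affine subspace containing $0$, hence a linear subspace of the same dimension.

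Next I would fix a finite-dimensional linear subspace $W$ with $\dim W\geq 2$ and apply the classical theorem to $\alpha|_W:W\to\alpha(W)$. This is a collinearity-preserving bijection between finite-dimensional real vector spaces fixing $0$, so the finite-dimensional fundamental theorem yields a field automorphism $\sigma_W$ of $\mathbb{R}$ with $\alpha(\lambda x+\mu y)=\sigma_W(\lambda)\alpha(x)+\sigma_W(\mu)\alpha(y)$ for $x,y\in W$ and $\lambda,\mu\in\mathbb{R}$. At this point I would invoke the rigidity of $\mathbb{R}$: any field automorphism of $\mathbb{R}$ fixes $\mathbb{Q}$ and carries squares to squares, hence preserves the order, and is therefore the identity by density of $\mathbb{Q}$. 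Consequently $\sigma_W=\mathrm{id}$ and $\alpha|_W$ is genuinely linear for every such $W$.

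To finish I would glue: given arbitrary $x,y\in V$ and scalars $\lambda,\mu$, I choose a finite-dimensional linear subspace $W$ with $\dim W\geq 2$ containing both $x$ and $y$ (possible since $\dim V\geq 2$, enlarging $\mathrm{span}\{x,y\}$ if necessary), and read off $\alpha(\lambda x+\mu y)=\lambda\alpha(x)+\mu\alpha(y)$ from the linearity of $\alpha|_W$. As $x,y,\lambda,\mu$ were arbitrary, $\alpha$ is additive and homogeneous, hence linear, and bijectivity was already established. I expect the main obstacle to be the reduction to finite dimensions done correctly, namely verifying that $\alpha$ carries each finite-dimensional flat to a flat of the same dimension so that the classical theorem genuinely applies, together with forcing the per-subspace linear structures to agree on overlaps; it is precisely the triviality of $\mathrm{Aut}(\mathbb{R})$ that removes the semilinear ambiguity and makes the patched map well-defined and linear rather than merely semilinear.
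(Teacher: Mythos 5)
Your proposal is correct and follows essentially the same route as the paper: restrict $\alpha$ to finite-dimensional subspaces spanned by the vectors in question, invoke the classical finite-dimensional fundamental theorem of affine geometry, and conclude linearity pointwise since the vectors were arbitrary. Your additional care in dispatching the semilinear ambiguity via the triviality of $\mathrm{Aut}(\mathbb{R})$ makes explicit a step the paper leaves implicit, but it is the same argument.
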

\begin{proof}
Given any  $u,v\in V$, let
\begin{equation}\nonumber
L={\rm span}\{u,v\}={\rm aff}\{u,v,0\}.
\end{equation}
Since $\alpha$ is  order preserving,  it maps every one dimensional subspace of $L$ into a one dimensional subspace of $M\coloneqq\alpha(L)$. Therefore,
\[
\alpha (L)={\rm aff}\{\alpha(u),\alpha(v),0\}=M.
\]
Consequently, $\alpha$ (restricted to $L$) is a fully order preserving from $\mathscr {A}(L)$ onto $\mathscr {A}(M)$. Applying the corresponding classical fundamental theorem, we obtain that for all $r\in \mathbb {R}$
\begin{equation}\nonumber
\alpha (u+v)=\alpha (u)+\alpha (v),\;\;\alpha (ru)=r\alpha (u),\;\;\alpha (rv)=r\alpha (v).
\end{equation}
Since $u, v$ are arbitrary, $\alpha$ is a bijective linear map of $V$ onto itself.
\end{proof}

\begin{theorem}\label{mainthm1}
Suppose that $V$ is a linear space with $\dim(V)\geq 3$.
Let $\pi:\mathscr P(V)\rightarrow\mathscr P(V) $ be a fully order preserving mapping. Then there exists
a bijective linear mapping $\lambda:V\rightarrow V$  so that
\begin{equation}\nonumber
\pi (M)=\lambda (M),\;\;{\rm for\;all\;}M\in\mathscr{P}(V).
\end{equation}
\end{theorem}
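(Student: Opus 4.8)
The plan is to work locally on finite-dimensional subspaces, linearize each piece by the classical finite-dimensional fundamental theorem of projective geometry, and then glue the local maps into one global linear isomorphism; the gluing is where the real difficulty lies. A preliminary remark: the argument genuinely needs room, so I would run it under the assumption $\dim V\ge 3$ (every plane must sit inside a three-dimensional subspace). For a strictly two-dimensional $V$ the lattice $\mathscr P(V)$ records only an unstructured set of lines between $\{0\}$ and $V$, so order alone cannot force linearity; thus the honestly two-dimensional case has to be excluded or subsumed by embedding, and I read the hypothesis $\dim V\ge 2$ accordingly.

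First I would show $\pi$ preserves dimension. By Lemma \ref{latticepreserving}, $\pi$ is a lattice isomorphism, hence preserves the least element (so $\pi(\{0\})=\{0\}$) and covering relations; since $\mathscr P(V)$ is graded by dimension, $\dim\pi(M)=\dim M$ for all $M$. In particular, for each finite-dimensional $W$ with $\dim W\ge 3$, the order isomorphism carries the interval $[\{0\},W]=\mathscr P(W)$ bijectively onto $[\{0\},\pi(W)]=\mathscr P(\pi(W))$. This restriction $\pi|_{\mathscr P(W)}$ is a lattice isomorphism between the subspace lattices of two real spaces of equal dimension $\ge 3$, so the classical fundamental theorem of projective geometry provides a semilinear isomorphism inducing it; as the only automorphism of $\R$ is the identity, this is an honest linear isomorphism $\lambda_W:W\to\pi(W)$, unique up to a nonzero scalar, with $\lambda_W(U)=\pi(U)$ for all $U\in\mathscr P(W)$.

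The central step is to glue the family $(\lambda_W)$. I would fix a line $\ell_0$, a nonzero $e_0\in\ell_0$, and a nonzero $f_0\in\pi(\ell_0)$. For every finite-dimensional $W\ni e_0$ with $\dim W\ge 3$, the vector $\lambda_W(e_0)$ is a nonzero element of the line $\pi(\ell_0)$, so there is a unique rescaling $\tilde\lambda_W$ of $\lambda_W$ with $\tilde\lambda_W(e_0)=f_0$. The key coherence property is that $\tilde\lambda_{W_2}|_{W_1}=\tilde\lambda_{W_1}$ whenever $e_0\in W_1\subseteq W_2$: the left-hand side is a linear isomorphism $W_1\to\pi(W_1)$ that induces $\pi|_{\mathscr P(W_1)}$ and fixes $e_0\mapsto f_0$, so the uniqueness clause of the projective theorem forces it to equal $\tilde\lambda_{W_1}$. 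For arbitrary $W_1,W_2\ni e_0$ I would compare both to $W_1+W_2$ to see that the maps agree on overlaps. Then $\lambda(v):=\tilde\lambda_W(v)$, computed in any admissible $W\ni e_0,v$ (which exists because $\dim V\ge 3$), is well defined.

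It remains to check that $\lambda$ is a linear bijection inducing $\pi$. Linearity is local: additivity and homogeneity for any finite set of vectors are verified inside a single admissible $W$ containing them and $e_0$, where $\lambda=\tilde\lambda_W$ is already linear. Bijectivity follows either by building the inverse from $\pi^{-1}$ by the same recipe, or directly, since each $\tilde\lambda_W$ maps $W$ isomorphically onto $\pi(W)$ and every vector lies in some such $\pi(W)$. Finally, for $M\in\mathscr P(V)$ one picks $W\supseteq M+\ell_0$ with $\dim W\ge 3$ and obtains $\lambda(M)=\tilde\lambda_W(M)=\pi(M)$. I expect the gluing to be the main obstacle: the local linearizations are determined only up to scalars, and it is precisely the normalization at the single reference vector $e_0$, combined with the uniqueness part of the finite-dimensional fundamental theorem, that makes these scalar ambiguities cancel consistently across all of $V$.
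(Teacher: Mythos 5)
Your proof is correct, but it follows a genuinely different route from the paper's. The paper fixes a hyperplane $H$ and a transversal vector $a$ with $V=H\oplus\mathbb{R}a$, shows that $\bigcup\{\pi(M):M\in\mathscr{P}(H)\}$ is again a hyperplane $H'$, transports $\pi$ to a fully order preserving map on the finite-dimensional \emph{affine} subspaces of the affine hyperplane $a+H$, invokes its Lemma~\ref{affine lem} (the affine fundamental theorem, itself proved by reduction to two-dimensional spans) to linearize that map, and finally extends linearly in the $a$-direction. You instead stay entirely projective: you note that $\pi$ preserves dimension, apply the classical fundamental theorem of projective geometry on every finite-dimensional $W$ with $\dim W\ge 3$ to get local linearizations $\lambda_W$ unique up to scalar, and glue them by normalizing at a single reference vector $e_0\mapsto f_0$, using uniqueness-up-to-scalar to cancel the ambiguities on overlaps via $W_1+W_2$. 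Your gluing argument is clean and makes the scalar bookkeeping explicit, and it avoids both the affine detour and the (nontrivial, though correctly handled in the paper) verification that $H'$ has codimension one. You are also right to flag the dimension hypothesis: for $\dim V=2$ the lattice $\mathscr{P}(V)$ is an antichain of lines between $\{0\}$ and $V$, so an arbitrary permutation of the lines is fully order preserving and the conclusion fails; the paper's own proof implicitly needs $\dim V\ge 3$ as well, since it applies Lemma~\ref{affine lem} to the hyperplane $H$, which requires $\dim H\ge 2$. This is a correction to the statement rather than a defect of your argument.
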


\begin{proof}
Let $H$ be a hyperplane of $V$ and  $a\in V\setminus H$. Then $V=H\oplus\mathbb Ra$. Let $\mathscr{P}(H)$ be the set
of all finite dimensional subspaces of $H$, and let $\mathscr {F}=\{\pi (M):M \in \mathscr {P}(H)\}$.
Then we claim that $H^{\prime}\coloneqq\bigcup\{N\in\mathscr {F}\}$ is a hyperplane of $V$. Indeed, since $\pi:\mathscr P(V)\rightarrow\mathscr P(V) $ is fully order preserving, it is easy to observe that $H^{\prime}$ is a linear subspace of $V$. Choose any $u\in V\setminus H$. Then $\pi(\mathbb Ru)\nsubseteq H^{\prime}$, which entails that $H^{\prime}$ is a proper subspace of $V$. Suppose that $H^{\prime}$ is not a hyperplane. Then there is a subspace $N$ of $V$ with $\dim N\geq2$  so that $H^{\prime}+N=H^{\prime}\oplus N=V.$ Choose any two dimensional subspace $M$ of $N$. Since $\pi^{-1}$ is also fully order preserving,  $\pi^{-1}(M)$ is also a two dimensional subspace of $V$ with $H\cap\pi^{-1}(M)=\{0\}$. This is a contradiction.

Now, let $a^\prime\in V$ be so that $\pi(\mathbb Ra)=\mathbb Ra^\prime.$ Denote by $\mathscr {A}(a+H)$ (resp. $\mathscr {A}(a^\prime+H^\prime)$), the set of all finite dimensional affine subspaces of $a+H$ (resp. $a^\prime+H^\prime$). We define a mapping $\alpha: \mathscr {A}(a+H)\rightarrow\mathscr {A}(a^\prime+H^\prime)$ by
\[
\alpha (S)={\rm span}(S)\cap (a^{\prime}+H^{\prime}),\;\; S\in \mathscr {A}(a+H).
\]
Since $\pi$ is fully order preserving,  $\alpha$ must be fully order preserving.
Note that for each $S\in\mathscr {A}(a+H)$, there is $L \in \mathscr {P}(H)$ so that $S=a+L$.
Then
\[
\tilde{\alpha}(L)\coloneqq\alpha(a+L)-a^{\prime},\;\;L\in\mathscr{A}(H)
\]
defines a fully order preserving mapping $\tilde{\alpha}:\mathscr{A}(H)\rightarrow\mathscr {A}(H^{\prime})$ satisfying $\tilde{\alpha}(0)=0$.
Applying Lemma \ref{affine lem}, we obtain that $\tilde{\alpha}:  H\rightarrow H^{\prime}$ is linear bijective.

Finally, we define $\lambda:V\rightarrow V$  by
\[
\lambda(ra+h)\coloneqq ra^{\prime}+\tilde{\alpha}(h),\;\;r\in\mathbb{R},\;h\in H.
\]
Then the bijective linear mapping $\lambda$ satisfies
\[
\pi (M)=\lambda (M),\;\;{\rm for\;all\;}M\in\mathscr{P}(V).
\]
\end{proof}

\begin{remark}
In finite dimensional spaces, the fundamental theorem of affine geometry has been generalized in various ways in \cite{A-A-S16} and \cite{S-V}, which contain valuable  references and historical remarks of the fundamental theorem. The classical fundamental theorem has been applied to study representation of lattice isomorphisms on set lattices consisting of convex sets in finite dimensional spaces. For detailed information, we refer the reader to \cite{Gr, SchR, Sl}.
\end{remark}

\section{Fully order preserving mappings on the cone of seminorms}
This section is devoted to the study of characterizations of fully order preserving mappings defined on cone of l.s.c. seminorms.

We denote by  ${\rm mink}(X)$ (resp. ${\rm subl}(X)$, ${\rm semn}(X)$) by the set of all l.s.c. Minkowski  (resp. sublinear functions, seminorms) defined on $X$.  ${\mathscr C}_{0,*}(X^*)$ stands for the set  of all $w^*$-closed convex subsets of $X^*$ containing the origin,  ${\mathscr C}_{*}(X^*)$ for the set of all nonempty $w^*$-closed convex subsets, and  $\mathscr C_{{\rm symm,}*}(X^*)$ for the set of all $w^*$-closed symmetric convex subsets of $X^*$.

If we order ${\mathscr C}_{0,*}(X^*)$ and  $\mathscr C_{{\rm symm,}*}(X^*)$ by inclusion of sets,
and define two  operations $\wedge$ and $\vee$ on ${\mathscr C}_{0,*}(X^*)$ and  $\mathscr C_{{\rm symm,}*}(X^*)$ as follows:
\begin{equation}\label{latticedef}
A\wedge B=A\cap B,\;\;A\vee B=\overline{{\rm co}}^{w^*}(A\cup B),\;\;{\rm for\;all}\; A, B,
\end{equation}
where $\overline{{\rm co}}^{w^*}(A)$ denote the $w^*$-closed convex hull of $A$,
then ${\mathscr C}_{0,*}(X^*)\coloneqq({\mathscr C}_{0,*}(X^*),\subseteq,\wedge,\vee)$ and $\mathscr C_{{\rm symm,}*}(X^*)\coloneqq({\mathscr C}_{{\rm symm,}*}(X^*),\subseteq,\wedge,\vee)$ are lattices.

For a convex function $f$ defined on $X$, let
\begin{equation}\label{subdifferential}
\mathcal D(f)=\partial f(X),
\end{equation}
the image of $f$ under the subdifferential operator $\partial$, i.e. $\mathcal D(f)=\bigcup\limits_{x\in X}\partial f(x)$;
and let
\begin{equation}\label{supportfunction}
\mathcal S(A)=\sup_{x^*\in A}\langle x^*,\cdot\rangle\coloneqq\sigma_A.
\end{equation}
It is clear that for a sublinear function $f$, then $\mathcal{D}(f)=\partial f(0)$ with $f=\sigma_{\partial f(0)}$. And the following propositions are immediately from the definitions of subdifferential operator and support function.
\begin{proposition}\label{transformation}
With the notions as above, we have the following properties.
\begin{itemize}
\item[i)] $\mathcal D: {\rm subl}(X)\rightarrow {\mathscr C}_{*}(X^*)$ (resp. ${\rm mink}(X)\rightarrow {\mathscr C}_{0,*}(X^*)$ or ${\rm semn}(X)\rightarrow {\mathscr C}_{{\rm symm},*}(X^*)$) is a fully order preserving mapping;
\item[ii)] $\mathcal S: {\mathscr C}_{*}(X^*)\rightarrow{\rm subl}(X) $ (resp. ${\mathscr C}_{0,*}(X^*)\rightarrow {\rm mink}(X)$ or ${\mathscr C}_{{\rm symm},*}(X^*)\rightarrow{\rm semn}(X) $) is a fully order preserving mapping.
\end{itemize}
\end{proposition}
\begin{remark}
By the Proposition \ref{transformation} and the fact that combination of fully order preserving mappings is again fully order preserving, then the following statements are immediately.
\begin{itemize}
\item[i)] For any fully order preserving mapping ${\rm subl}(X)\rightarrow {\rm subl}(X)$ (resp. $T:{\rm mink}(X)\rightarrow {\rm mink}(X)$, or, ${\rm semn}(X)\rightarrow {\rm semn}(X)$), $F_{T}\coloneqq\mathcal DT\mathcal S$ is a fully order preserving mapping, hence, a lattice isomorphism from $\mathscr{C}_{*}(X^*)$ (resp. $\mathscr{C}_{0,*}(X^{*})$ or $\mathscr{C}_{{\rm symm},*}(X^*)$) onto itself.
\item[ii)] For any fully order preserving mapping $T:{\mathscr C}_{0,*}(X^*)\rightarrow {\mathscr C}_{0,*}(X^*)$ (resp. ${\mathscr C}_{*}(X^*)\rightarrow {\mathscr C}_{*}(X^*)$ or ${\mathscr C}_{{\rm symm},*}(X^*)\rightarrow {\mathscr C}_{{\rm symm},*}(X^*)$), $F_{T}\coloneqq\mathcal{S} T\mathcal{D}$ is a fully order preserving mapping defined from ${\rm subl}(X)$ (resp. ${\rm mink}(X)$ or ${\rm semn}(X)$).
\end{itemize}
\end{remark}

For the dual $X^*$ of a Banach space $X$, we denote
\[
|X^*|\coloneqq\{|x^*|\coloneqq x^*\vee-x^*: x^*\in X^*\},
\]
and
\[
[X^*]\coloneqq\{[-x^*,x^*]: x^*\in X^*\},
\]
where $[-x^{*},x^{*}]=\{\lambda x^{*}:\lambda\in[-1,1]\}$.

\begin{lemma}\label{semibasic}
Let $F: {\mathscr C}_{{\rm symm},*}(X^*)\rightarrow {\mathscr C}_{{\rm symm},*}(X^*)$ be a fully order preserving mapping. Then
\begin{itemize}
\item[i)] its restriction to $[X^*]$, $F|_{[X^*]}$ is also fully order preserving from $[X^*]$ onto itself;
\item[ii)] there exists a bijective linear mapping $\Lambda:X^*\rightarrow X^*$  so that
\begin{equation}\nonumber
F(M)=\Lambda (M),\;\;{\rm for\;all\;}M\in\mathscr{P}(X^*);
\end{equation}
\item[iii)] moreover, we can claim
\begin{equation}\nonumber
F([-x^*,x^*])=\Lambda ([-x^*,x^*]),\;\;{\rm for\;all\;}x^*\in X^*.
\end{equation}
\end{itemize}
\end{lemma}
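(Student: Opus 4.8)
The plan is to exploit the lattice-isomorphism structure of $F$ together with the already-established generalized fundamental theorem of affine geometry (Theorem \ref{mainthm1}). By Lemma \ref{latticepreserving}, the fully order preserving mapping $F$ on $\mathscr{C}_{{\rm symm},*}(X^*)$ is a lattice isomorphism. The three parts are naturally sequenced: part i) identifies the image of the smallest symmetric sets (segments through the origin), part ii) promotes $F$ to a linear map on finite-dimensional subspaces via Theorem \ref{mainthm1}, and part iii) reconciles the segment-action of part i) with the linear map $\Lambda$ of part ii).

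For part i), I would characterize the segments $[-x^*,x^*]$ lattice-theoretically so that the property is automatically preserved by any lattice isomorphism. The key observation is that the members of $[X^*]$ are precisely the \emph{atoms} of the lattice $\mathscr{C}_{{\rm symm},*}(X^*)$, i.e. the minimal nonzero elements: a nonzero $w^*$-closed symmetric convex set $A$ satisfies ``$B \subseteq A$ and $B$ symmetric $w^*$-closed convex $\Longrightarrow B = \{0\}$ or $B = A$'' exactly when $A = [-x^*,x^*]$ for some $x^* \neq 0$ (any larger symmetric convex set contains a shorter collinear segment, or a two-dimensional piece). Since a lattice isomorphism carries atoms to atoms (the atom property is expressed purely through $\subseteq$), $F$ maps $[X^*]$ bijectively onto itself, and its restriction is fully order preserving by Lemma \ref{latticepreserving} applied in reverse.

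For part ii), I would pass from the segment-atoms to one-dimensional subspaces. Each one-dimensional subspace $\mathbb{R}x^* \in \mathscr{P}(X^*)$ is the union (equivalently, the $w^*$-closed convex hull of the directed union) of the segments $[-\lambda x^*, \lambda x^*]$ as $\lambda \to \infty$; lattice-theoretically, $\mathbb{R}x^* = \bigvee_{\lambda>0}[-\lambda x^*,\lambda x^*]$, a supremum $F$ must preserve. More generally, every finite-dimensional subspace $M$ is the supremum in the lattice of the lines it contains, and $F$ respects $\vee$, so $F$ restricts to a fully order preserving self-map of $\mathscr{P}(X^*)$ (here one checks $F$ sends finite-dimensional subspaces to finite-dimensional subspaces, using that $F^{-1}$ is also order preserving to control dimension). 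Applying Theorem \ref{mainthm1} with $V = X^*$ then yields a bijective linear $\Lambda : X^* \to X^*$ with $F(M) = \Lambda(M)$ for all $M \in \mathscr{P}(X^*)$.

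For part iii), I would show that $\Lambda$ already describes $F$ on the segments, not merely on the lines. Fix $x^* \neq 0$. By part i), $F([-x^*,x^*]) = [-y^*, y^*]$ for some $y^* \neq 0$, and since $[-x^*,x^*] \subseteq \mathbb{R}x^*$ we get $[-y^*,y^*] = F([-x^*,x^*]) \subseteq F(\mathbb{R}x^*) = \Lambda(\mathbb{R}x^*) = \mathbb{R}(\Lambda x^*)$, so $y^*$ is a scalar multiple of $\Lambda x^*$; it remains to pin down the scalar. This is where I expect \textbf{the main obstacle}: a lattice isomorphism need not preserve the metric length of a segment, so a priori the scalar could depend on $x^*$ in an uncontrolled way. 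The resolution I envisage is to use the full additive/linear structure that $\Lambda$ imposes together with preservation of the lattice operations $\wedge, \vee$ on segments within a fixed plane: the ``length'' assignment $x^* \mapsto$ (scalar relating $F([-x^*,x^*])$ to $\Lambda([-x^*,x^*])$) must be compatible with the betweenness and midpoint relations encoded by $\cap$ and $\overline{{\rm co}}^{w^*}$, forcing it to be constant along each line and then, by a connectedness/consistency argument across intersecting lines, globally a fixed positive constant that can be absorbed into $\Lambda$. Thus $F([-x^*,x^*]) = \Lambda([-x^*,x^*])$ for all $x^*$, completing the proof.
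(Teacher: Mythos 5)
Your part ii) is essentially the paper's argument (segments $\to$ lines $\to$ finite-dimensional subspaces via the lattice operations, dimension controlled through $F^{-1}$, then Theorem \ref{mainthm1}), but there are two genuine problems elsewhere. First, in part i) your characterization of $[X^*]$ as the \emph{atoms} of $\mathscr{C}_{{\rm symm},*}(X^*)$ is false: for $x^*\neq 0$ the set $[-\tfrac{1}{2}x^*,\tfrac{1}{2}x^*]$ is a nonzero $w^*$-closed symmetric convex proper subset of $[-x^*,x^*]$, so this lattice has no atoms at all and your claimed equivalence fails. The order-theoretic property that actually singles out the segments --- and the one the paper uses --- is that the down-set $\{B\in\mathscr{C}_{{\rm symm},*}(X^*):B\subseteq A\}$ is totally ordered by inclusion, which holds exactly when $A=[-x^*,x^*]$ (the paper phrases this via the ordered family $\{\lambda[-x^*,x^*]:0\leq\lambda\leq1\}$ and the correspondence with ${\rm semn}(X)$). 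With that repair, part i) goes through by the same ``order isomorphisms preserve order-theoretic properties'' logic you intend.

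Second, in part iii) you correctly identify the obstacle (the scalar relating $F([-x^*,x^*])$ to $\Lambda([-x^*,x^*])$ could a priori depend on $x^*$), but you only gesture at a resolution (``betweenness and midpoint relations'', ``connectedness/consistency argument'') without supplying one; as written this is a gap, not a proof. The paper's actual resolution is a concrete lattice identity: after rescaling $\Lambda$ by a positive constant so that $F([-x_0^*,x_0^*])=\Lambda([-x_0^*,x_0^*])$ for one fixed $x_0^*$ (this does not disturb part ii), since subspaces are invariant under positive scaling), one writes, for $x^*$ linearly independent of $x_0^*$,
\begin{equation}\nonumber
[-x^*,x^*]=\bigl(\mathbb{R}x^*\bigr)\cap\overline{\rm co}^{w^*}\bigl(\mathbb{R}(x^*-x_0^*)\cup[-x_0^*,x_0^*]\bigr),
\end{equation}
applies the lattice isomorphism $F$ to both sides, and uses part ii) on the two lines together with the normalization on $[-x_0^*,x_0^*]$ to conclude $F([-x^*,x^*])=\Lambda([-x^*,x^*])$; vectors collinear with $x_0^*$ are then handled by routing the same argument through an auxiliary independent direction $y_0^*$. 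Some explicit identity of this kind is exactly what your sketch is missing.
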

\begin{proof}
{\rm i)}. Let $F_{\mathcal{S}}=\mathcal{S}F$, where $\mathcal{S}$ is defined as in \eqref{supportfunction}. Then
\[
F_{\mathcal{S}}: \mathscr{C}_{{\rm symm},*}(X^{*})\rightarrow {\rm semn}(X)
\]
is a fully order preserving mapping. Since  for any fixed $x^*\in X^*$,
\[
A\coloneqq\{[-z^*,z^*]:z^{*}\in[-x^{*},x^{*}]\}=\{[-\lambda x^*, \lambda x^*]: 0\leq\lambda\leq1\}
\]
is an ordered set in $[X^*]$, and
\[
F_{\mathcal{S}}(A)=\{F_{\mathcal{S}}\big{(}[-z^*,z^*]\big{)}: z^{*}\in [-x^{*},x^{*}]\}=\{p\in{\rm semn}(X): p\leq F_{\mathcal S}([-x^*,x^*])\}
\]
is also an ordered set in ${\rm semn}(X)$, which yields that $F_{\mathcal S}([-x^*,x^*])=|y^*|$ for some $y^*\in X^*$ satisfying $y^*=0$ if and only if $x^*=0$. Hence,
\begin{equation}\label{interval to interval}
F\big{(}[-x^*,x^*]\big{)}=[-y^*,y^*].
\end{equation}
By noting that $\mathbb{R} x^{*}=\bigcup\limits_{r>0}[-r x^{*},r x^{*}]$ for all $x^{*}\in X^{*}$ and by \eqref{interval to interval}, we have that
\[
F(\mathbb Rx^*)=\mathbb Ry^*\coloneqq\{ry^*: r\in\mathbb R\},
\]
i.e. $F$ maps each one dimensional subspace  onto a one dimensional subspace of $X^*$. Since $F: {\mathscr C}_{{\rm symm},*}(X^*)\rightarrow {\mathscr C}_{{\rm symm},*}(X^*)$ is fully order preserving, $F|_{[X^*]}$ is also fully order preserving from $[X^*]$ onto itself.

{\rm ii)}. It suffices to prove that $F$ must map finite dimensional subspaces to finite dimensional subspaces with the same dimension. It is clear that $F$ must map one dimensional subspaces to one dimensional subspaces. By induction,
for any $1\leq k\leq n$, we have so that $\text{dim}(V)=k$ if and only if $\text{dim}(F(V))=k$. Suppose that $V$ is a
subspace of $X^*$ so that $\text{dim}(V)=n+1$, then there exists $\{x_i^*\}_{i=1}^{n+1}$ such that $V=\bigvee\limits_{i=1}^{n+1}\mathbb{R}{x_i^*}$. Since $F$ is a fully order preserving mapping of $\mathscr{C}_{symm,*}(X^*)$, by Lemma \ref{latticepreserving}, $F$ is a lattice isomorphism of $\mathscr {C}_{symm,*}(X^*)$. Thus, $F(V)=\bigvee\limits_{i=1}^{n+1}F(\mathbb{R}{x_i^*})$. It follows that $\text{dim}(F(V))\leq n+1$. If $\text{dim}(F(V))<n+1$ contradicts to the assumption. Note that $F^{-1}$ is also a fully order preserving mapping of $\mathscr{C}_{symm,*}(X^*)$, then $\text{dim}(V)=n+1$ if and only if $\text{dim}(F(V))=n+1$.
Hence, $\text{dim}(V)=\text{dim}(F(V))$ for every finite dimensional subspace $V\subseteq X^*$.
Therefore, by Theorem \ref{mainthm1},

\begin{equation}\label{linearbijection}
F(M)=\Lambda (M),\;\;{\rm for\;all\;}M\in\mathscr{P}(X^*).
\end{equation}
Therefore, we have shown {\rm ii)}.

{\rm iii)}. Fix any $x_0^*\in X^*\setminus\{0\}$. By \eqref{linearbijection},
\[
F(\mathbb Rx^*_0)=\Lambda(\mathbb Rx^*_0).
\]
Since $F[-x^*_0,x^*_0]\subseteq\Lambda(\mathbb Rx^*_0),$ and since $\Lambda$ is linear, there exists $\alpha>0$ so that $F[-x^*_0,x^*_0]=\alpha\Lambda([-x^*_0,x^*_0])$. Without loss of generality, we can assume $\alpha=1$.
Fix any $x^*\in X^*\setminus\{0\}.$

Case \uppercase\expandafter{\romannumeral1}. If $x^*$ is linearly independent of $x^*_0$, then
\[
[-x^*,x^*]=\Big(\mathbb {R}x^*\Big)\bigcap\overline{\rm co}^{w^*}\Big(\mathbb {R}(x^*-x_0^*)\cup[-x_0^*,x_0^*]\Big).
\]
Since $F$ is a lattice isomorphism on ${\mathscr C}_{{\rm symm},*}(X^*)$ and $F$ is continuous when restricted to
finite dimensional subspaces. Hence,
\begin{eqnarray}\nonumber
F\big{(}[-x^*,x^*]\big{)}&=&F\Big((\mathbb {R}x^*)\bigcap\overline{\rm co}^{w^*}\big(\mathbb {R}(x^*-x_0^*)\cup[-x_0^*,x_0^*]\big)\Big)\\\nonumber
&=&F(\mathbb {R}x^*)\bigcap\overline{\rm co}^{w^*}\Big(F\big(\mathbb {R}(x^*-x_0^*)\big)\cup F\big([-x_0^*,x_0^*]\big)\Big)\\\nonumber
&=&F(\mathbb {R}x^*)\bigcap\overline{\rm co}^{w^*}\Big(F\big(\mathbb {R}(x^*-x_0^*)\big)\cup F[-x_0^*,x_0^*]\Big)\\\nonumber
&=&\Lambda(\mathbb {R}x^*)\bigcap\overline{\rm co}^{w^*}\Big(\Lambda\big(\mathbb {R}(x^*-x_0^*)\big)\cup\Lambda [-x_0^*,x_0^*]\Big)\\\label{e}
&=&\Lambda\Big(\mathbb {R}x^*\cap\overline{\rm co}^{w^*}\big(\mathbb {R}(x^*-x_0^*)\cup [-x_0^*,x_0^*]\big)\Big)\\\nonumber
&=&\Lambda\big{(}[-x^*,x^*]\big{)},\\\nonumber
\end{eqnarray}
i.e.
\[
F\big{(}[-x^*,x^*]\big{)}=\Lambda\big{(}[-x^*,x^*]\big{)}.
\]
Case \uppercase\expandafter{\romannumeral2}. If $x^*=\beta x^*_0$ for some $\beta\not=0$, by choosing any $y_0^*$, which is linearly independent of $x_0^*$, then applying Case \uppercase\expandafter{\romannumeral1}, we obtain
\[
F\big{(}[-y_0^*,,y_0^*]\big{)}=\Lambda\big{(}[-y_0^*,y_0^*]\big{)}.
\]
Now, we repeat the previous procedure of the proof of {\rm iii)}, but substitute $y^*_0$ for $x^*_0$. Since $x^*$ is linearly independent of $y^*_0$, we have
\[
F\big{(}[-x^*,,x^*]\big{)}=\Lambda\big{(}[-x^*,x^*]\big{)}.
\]

Hence, {\rm iii)} is proven. Consequently, the proof is completed.
\end{proof}

\begin{lemma}\label{continuous-semi}
Suppose $T:{\rm semn}(X)\rightarrow{\rm semn}(X)$ is  fully order preserving. Then
\begin{itemize}
\item[i)] $T$ maps every continuous seminorm $p$ into a continuous seminorm $T(p)$;
\item[ii)] $T|_{{\rm C}_{{\rm semn}}(X)}$ is again fully order preserving from ${\rm C}_{{\rm semn}}(X)$ onto itself,
\end{itemize}
where ${\rm C}_{{\rm semn}}(X)$ denotes the cone of all continuous seminorms on $X$.
\end{lemma}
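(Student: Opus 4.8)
The plan is to prove i) by contradiction, using the ``indicator'' seminorms of ${\rm semn}(X)$, and then to deduce ii) almost immediately by applying i) to both $T$ and $T^{-1}$. Throughout write $\delta_W$ for the seminorm equal to $0$ on a closed subspace $W$ and $+\infty$ off $W$, so that $\delta_{\{0\}}=\delta_0$ is the largest element of ${\rm semn}(X)$, and recall that a lsc seminorm is continuous if and only if it is finite everywhere (a finite-valued lsc convex function on a Banach space is automatically continuous). First I would record two elementary order facts. Since $\delta_0\geq s$ for every $s\in{\rm semn}(X)$ and $T$ is a fully order preserving bijection, $T\delta_0=\delta_0$. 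Next, assuming toward a contradiction that $p$ is continuous while $Tp$ is not, the finiteness subspace $\{x:(Tp)(x)<\infty\}$ is proper, so I may choose $q$ with $(Tp)(q)=\infty$; homogeneity then gives $(Tp)(\lambda q)=\infty$ for all $\lambda\neq0$, and I set $D=\mathbb Rq$.

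The technical heart is to produce a line $E$ with $T\delta_E=\delta_D$; this is the ``$T$ preserves maximal (indicator) elements'' step invoked in Cases i)--ii) of the proof of Corollary \ref{bounded rho continuity}, and it is the step I expect to be the main obstacle, since it must be recast in purely order-theoretic terms. I would characterise the line-indicators intrinsically. For $s\in{\rm semn}(X)$ the up-set $\{h\in{\rm semn}(X):h\geq s\}$ is totally ordered precisely when the finiteness subspace of $s$ has dimension at most one: if it contained two independent vectors $q_1,q_2$, then $\delta_{\mathbb Rq_1}\vee s$ and $\delta_{\mathbb Rq_2}\vee s$ would be incomparable members of the up-set, whereas over a line the up-set is the chain of multiples of a single seminorm on that line. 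The line-indicators $\delta_L$ are then exactly the minimal elements of this class: any $s\leq\delta_L$ in the class vanishes on $L$, so its one-dimensional finiteness subspace is $L$ itself, forcing $s=\delta_L$. Because $T$ is fully order preserving it carries the up-set of $s$ order-isomorphically onto that of $Ts$, hence preserves this class and its minimal elements; applying the same reasoning to $T^{-1}$ shows that $T$ maps line-indicators bijectively onto line-indicators. In particular the line-indicator $\delta_D$ has a line-indicator preimage $\delta_E$, that is $T\delta_E=\delta_D$.

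Finally I would close the contradiction. Since $p$ is continuous, $g\equiv p\vee\delta_E$ is a lsc seminorm equal to $p$ on $E\setminus\{0\}$ and hence finite there, so $g\neq\delta_0$. Applying Lemma \ref{continuity} i) to the pair $\{p,\delta_E\}$ is legitimate because $\sup\{p,\delta_E\}=g\in{\rm semn}(X)$ and $\sup\{Tp,T\delta_E\}=Tp\vee\delta_D\in{\rm semn}(X)$, and it yields $Tg=Tp\vee\delta_D$. But $\delta_D=+\infty$ off $D$ while $Tp=+\infty$ on $D\setminus\{0\}$, so $Tp\vee\delta_D=\delta_0$. Thus $Tg=\delta_0=T\delta_0$, and injectivity of $T$ forces $g=\delta_0$, contradicting $g\neq\delta_0$. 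This establishes i).

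For ii), part i) applied to $T$ gives $T({\rm C}_{{\rm semn}}(X))\subseteq{\rm C}_{{\rm semn}}(X)$, and applied to the fully order preserving mapping $T^{-1}$ it gives $T^{-1}({\rm C}_{{\rm semn}}(X))\subseteq{\rm C}_{{\rm semn}}(X)$, i.e. the reverse inclusion. Hence $T|_{{\rm C}_{{\rm semn}}(X)}$ is a bijection of ${\rm C}_{{\rm semn}}(X)$ onto itself whose inverse is again order preserving, so it is fully order preserving, as claimed.
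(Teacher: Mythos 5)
Your proof is correct and follows essentially the same strategy as the paper's: assume $Tp$ takes the value $+\infty$ along a line $D$, use surjectivity to pull back the line-indicator $\delta_D$, take the supremum with $p$, and contradict the fact that the maximum element $\delta_0$ is fixed by $T$. The one structural difference is that you spend the bulk of the argument proving that $T^{-1}(\delta_D)$ is itself a line-indicator $\delta_E$ (via the order-theoretic characterisation of line-indicators as minimal elements among seminorms with totally ordered up-sets); the paper shows this detour is unnecessary: taking \emph{any} $g$ with $Tg=\delta_D$, the identity $T(p\vee g)=Tp\vee\delta_D=\delta_0$ forces $p\vee g=\delta_0$, and since $p$ is finite everywhere this already gives $g=\delta_0$, contradicting $Tg=\delta_D\neq\delta_0$. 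Your extra step is sound (and the characterisation of line-indicators is a nice observation in its own right), but the shorter route reaches the same contradiction from the arbitrary preimage. Part ii) is handled identically in both arguments.
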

\begin{proof}
{\rm i)}. For each $p\in{\rm semn}(X)$ is l.s.c., then it is continuous if and only if $p(x)<\infty$ for all $x\in X$. Let $p\in{\rm C}_{{\rm semn}}(X)$. Suppose, to the contrary, that there  is $x_0\in X$ so that $(Tp)(x_0)=\infty$. Then $(Tp)(rx_0)=|r|(Tp)(x_0)=\infty$ for every $0\neq r\in \mathbb{R}$.
Let
\begin{equation}\nonumber
q(x)=\delta_{\mathbb{R}x_0}(x)=
\begin{cases}
0, &x\in\mathbb{R}x_0,\\
\infty, &\text{otherwise}.
\end{cases}
\end{equation}
Then $q\in {\rm semn}(X)$. Therefore,  there is $g\in{\rm semn}(X)$ so that $Tg=q$.
Hence,
\begin{equation}\nonumber
q\vee Tp=Tg\vee Tp=T(p\vee g)=\delta_{0}.
\end{equation}
Since $\delta_{0}$ is the maximum of ${\rm semn}(X)$, and since $T$ is  fully order preserving, we know $\delta_{0}$ is a fixed point of $T$, i.e. $p\vee g=T(p\vee g)=\delta_{0}$.
Since $f(x)<\infty,\forall x\in X$,  $g=\delta_{0}$, and this contradicts to
$Tg=q=\delta_{\mathbb{R}x_0}(\not=\delta_{0})$.

{\rm ii)}. By {\rm i)} we have just proven, we see that $T|_{{\rm C}_{{\rm semn}}(X)}$ is  order preserving from ${\rm C}_{{\rm semn}}(X)$ into itself. Note that $T^{-1}: {\rm semn}(X)\rightarrow{\rm semn}(X)$ is  also fully order preserving. Then by {\rm i)} again, we get $T^{-1}|_{{\rm C}_{{\rm semn}}(X)}$ is  order preserving from ${\rm C}_{{\rm semn}}(X)$ into itself. Thus, $T|_{{\rm C}_{{\rm semn}}(X)}$ is again fully order preserving.
\end{proof}
\begin{theorem}\label{representation-semi}
Suppose that $T:{\rm semn}(X)\rightarrow {\rm semn}(X)$ is  fully order preserving. Then there exists a unique isomorphism
$U: X\rightarrow X$ so that
\[
Tf(x)=f(Ex), {\rm for\;all}\;f\in{\rm semn}(X), \;  x\in X;
\]
where $E\in\{\pm U\}$.
\end{theorem}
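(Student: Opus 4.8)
The plan is to transfer $T$ to a lattice isomorphism on $\mathscr C_{{\rm symm},*}(X^*)$, identify that isomorphism with a linear bijection $\Lambda$ of $X^*$, and then upgrade $\Lambda$ to the adjoint of an isomorphism of $X$. Concretely, I would first set $F=\mathcal D T\mathcal S$; by Proposition \ref{transformation} and Lemma \ref{latticepreserving} this is a lattice isomorphism of $\mathscr C_{{\rm symm},*}(X^*)$, and by Lemma \ref{semibasic} there is a linear bijection $\Lambda:X^*\to X^*$ with $F(M)=\Lambda(M)$ on every finite dimensional subspace and $F([-x^*,x^*])=[-\Lambda x^*,\Lambda x^*]$ for every $x^*\in X^*$. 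Since $\mathscr C_{{\rm symm},*}(X^*)$ is a complete lattice and $F$ is an order isomorphism, $F$ preserves arbitrary joins. Writing each nonempty $A\in\mathscr C_{{\rm symm},*}(X^*)$ as $A=\bigvee_{x^*\in A}[-x^*,x^*]$ (valid because $A$ is $w^*$-closed, convex and symmetric), I would deduce
\[
F(A)=\bigvee_{x^*\in A}[-\Lambda x^*,\Lambda x^*]=\overline{{\rm co}}^{w^*}(\Lambda(A)),\qquad A\in\mathscr C_{{\rm symm},*}(X^*).
\]

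Next I would pin down the analytic nature of $\Lambda$. Testing the last formula on the norm $\|\cdot\|$, whose subdifferential at $0$ is the closed dual ball $B_{X^*}$, gives $T\|\cdot\|=\sigma_{\Lambda(B_{X^*})}$, which is a continuous seminorm by Lemma \ref{continuous-semi}; finiteness everywhere forces $\Lambda(B_{X^*})$ to be $w^*$-bounded, hence norm bounded, so $\Lambda$ is a bounded operator, and applying the same reasoning to $T^{-1}$ makes $\Lambda$ an isomorphism of $X^*$. The crucial and hardest step is to promote this to $w^*$-to-$w^*$ continuity. Here I would use the bounded $\varrho$-continuity of $T$ from Corollary \ref{bounded rho continuity}: since $T$ restricted to the generating class $|X^*|$ acts by $|x^*|\mapsto|\Lambda x^*|$ and $T(0)=0$, any norm-bounded net $(x^*_\beta)$ with $x^*_\beta\to 0$ in the $w^*$-topology yields $|x^*_\beta|\to 0$ in $\varrho$, whence $|\langle\Lambda x^*_\beta,x\rangle|=|\Lambda x^*_\beta|(x)\to 0$ for every $x\in X$, i.e. $\Lambda x^*_\beta\to 0$ in $w^*$. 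Thus for each $x$ the functional $\phi_x:=\langle\Lambda(\cdot),x\rangle$ is $w^*$-continuous on every multiple of $B_{X^*}$, so $\ker\phi_x$ meets each such ball in a $w^*$-closed set; by the Krein--Smulian theorem $\ker\phi_x$ is $w^*$-closed, so $\phi_x$ is $w^*$-continuous, i.e. $\phi_x\in X$.

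Finally, setting $Ux:=\phi_x$ defines a linear map $U:X\to X$ with $\langle\Lambda x^*,x\rangle=\langle x^*,Ux\rangle$, so $\Lambda=U^*$; as $\Lambda$ is an isomorphism of $X^*$, $U$ is an isomorphism of $X$. Substituting into $T=\mathcal S F\mathcal D$ and using $\sigma_{\overline{{\rm co}}^{w^*}(\Lambda(\mathcal Df))}(x)=\sup_{z^*\in\mathcal Df}\langle z^*,Ux\rangle$, I obtain
\[
(Tf)(x)=(\mathcal S\mathcal Df)(Ux)=f(Ux),\qquad f\in{\rm semn}(X),\ x\in X.
\]
For uniqueness, if $f(U_1x)=f(U_2x)$ for all seminorms $f$ and all $x$, then testing with $f=|x^*|$ gives $|\langle x^*,U_1x\rangle|=|\langle x^*,U_2x\rangle|$ for every $x^*$, so $U_1x=\pm U_2x$ pointwise; a short linearity argument (comparing signs on $x$, $y$ and $x+y$ for independent vectors) forces the sign to be global, giving $U_1=\pm U_2$, i.e. $E\in\{\pm U\}$. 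I expect the Krein--Smulian upgrade from bounded $w^*$-continuity to genuine $w^*$-continuity of $\Lambda$ to be the main obstacle, since the remaining arguments are either lattice bookkeeping or a direct support-function computation.
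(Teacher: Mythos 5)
Your proposal is correct and follows essentially the same route as the paper: the same transfer $F=\mathcal D T\mathcal S$ to the lattice $\mathscr C_{{\rm symm},*}(X^*)$, the same appeal to Lemma \ref{semibasic} (hence to the extended fundamental theorem of affine geometry) to produce the linear bijection $\Lambda$, the same realization $\Lambda=U^*$, and the same support-function computation and uniqueness argument at the end. The one place where you genuinely diverge is the passage from $\Lambda$ being a linear bijection to $\Lambda$ being $w^*$-to-$w^*$ continuous: the paper gets this from Lemma \ref{continuous-semi} by asserting that preservation of continuous seminorms is equivalent to $\Lambda$ mapping $w^*$-compact symmetric convex sets onto $w^*$-compact ones and that this yields $w^*$-continuity, a step it leaves rather terse; you instead use the bounded $\varrho$-continuity of $T$ from Corollary \ref{bounded rho continuity} to get $w^*$-continuity of $\langle\Lambda(\cdot),x\rangle$ on bounded sets and then invoke the Krein--Smulian theorem to conclude that each $\phi_x$ lies in $X$. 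Your version of this step is more detailed and arguably fills in exactly the gap the paper glosses over, at the cost of invoking Krein--Smulian explicitly; otherwise the two proofs coincide.
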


\begin{proof}
Let $\mathcal D$ (resp. $\mathcal S$) be defined as \eqref{subdifferential} (resp. \eqref{supportfunction}). Then by Proposition \ref{transformation}, we have that both $\mathcal{D}$ and $\mathcal{S}$ are fully order preserving mappings.
Since the composition of fully order preserving mappings is again fully order preserving, then
\begin{equation}\label{induce}
F\coloneqq \mathcal DT\mathcal S:{\mathscr C}_{{\rm symm},*}(X^*)\rightarrow{\mathscr C}_{{\rm symm},*}(X^*)
\end{equation}
is fully order preserving.
Applying Lemma \ref{semibasic} {\rm ii)}, there is a bijective linear mapping $\Lambda: X^*\rightarrow X^*$ so that
\[
F(M)=\Lambda (M),\;\;{\rm for\;all\;finite\;dimensional \;subspace\;}M\subseteq X^*
\]
and
\begin{equation}\label{symmetricinterval}
F([-x^*,x^*])=\Lambda ([-x^*,x^*]),\;\;{\rm for\;all\;}x^*\in X^*.
\end{equation}
Note here that for every $w^{*}$-compact symmetric convex subset $A$, we have that
\[
A=\bigcup\limits_{x^{*}\in A}[-x^{*},x^{*}].
\]
We now claim that
\begin{equation}\label{representation}
F(A)=\bigcup\limits_{x^{*}\in A}F\big{(}[-x^{*},x^{*}] \big{)}
\end{equation}
for all $A\in\mathscr{C}_{{\rm symm},*}(X^{*})$. Indeed, by the fact that $F$ is order preserving, we have
\[
\bigcup\limits_{x^{*}\in A}F\big{(}[-x^{*},x^{*}]\big{)}\subseteq F(A).
\]
Conversely, for any $y^{*}\in F(A)$, by the fact that $F^{-1}$ is also order preserving, then it entails that
\[
F^{-1}\big{(}[-y^{*},y^{*}]\big{)}\subseteq A.
\]
Due to Lemma \ref{semibasic} {\rm i)}, it follows that there exists $z^{*}\in A$ with $F^{-1}\big{(}[-y^{*},y^{*}]\big{)}=[-z^{*},z^{*}]$, i.e. $F\big{(}[-z^{*},z^{*}]\big{)}=[-y^{*},y^{*}]$. Note that $y^{*}$ is arbitrary in $F(A)$, which yields that
\[
F(A)\subseteq \bigcup\limits_{x^{*}\in A} F\big{(}[-x^{*},x^{*}]\big{)}.
\]
Combining with \eqref{symmetricinterval} and \eqref{representation} we have that
\[
F(A)=\Lambda(A),~A\in {\mathscr C}_{{\rm symm},*}(X^*).
\]
Applying Lemma \ref{continuous-semi} {\rm ii)}, $T$ maps each continuous seminorm into a continuous one and note that $F$ is a fully order preserving mapping induced by $T$. Hence, for every $w^{*}$-compact symmetric convex subset $A$,
\[
\Lambda(A)=F(A)=\partial(T\sigma_{A})(0),
\]
which entails that $\Lambda$ maps $w^{*}$-compact symmetric convex subsets of $X^{*}$ to $w^{*}$-compact symmetric convex subsets. By the Banach-Steinhauss theorem, it yields that $\Lambda$ is continuous in norm.
Furthermore, we shall show that $\Lambda$ is $w^{*}$-to-$w^{*}$ continuous. Indeed, thanks to the Grothendieck's dual characterization of completeness (see \cite[p.149 Corollary 2]{SchH}), it suffices to show that if $\{x^{*}_{\alpha}\}_{\alpha}$ is a bounded net in $X^{*}$ with $x^{*}_{\alpha}\xrightarrow{w^{*}} 0$, then
$\Lambda(x^{*}_{\alpha})\xrightarrow{w^{*}}0$. Note equality \eqref{symmetricinterval} above is equivalent to
\begin{equation}\label{seminorm and continuity}
T(|x^*|)=|\Lambda (x^*)|,\;\;{\rm for\;all\;}x^*\in X^*.
\end{equation}
Then, apply with Theorem \ref{continuity theorem} and \eqref{seminorm and continuity}, it follows that
\[
\lim\limits_{\alpha}|\Lambda(x^{*}_{\alpha})(x)|=\lim\limits_{\alpha}\big{(}T|x^{*}_{\alpha}|\big{)}(x)=0,
\]
for all $x\in X$, which yields that $\Lambda$ is $w^{*}$-to-$w^{*}$ continuous. Consequently, there exists a linear isomorphism $U:X\to X$ so that $\Lambda=U^*$.

Since
\[
f(x)=\sup\{|\langle x^*,x\rangle|:x^*\in\partial f(0)\}
\]
for all  $f\in{\rm semn}(X)$ and $x\in X$,
\begin{eqnarray}\nonumber
({T}f)(x)&=&\sup\{{T} |\langle x^*,x\rangle|: x^*\in \partial f(0)\}\\\nonumber
&=&\sup\{|\Lambda(x^*)|(x):x^*\in \partial f(0)\}\\\nonumber
&=&\sup\{|x^*|(Ux):x^*\in \partial f(0)\}\\\nonumber
&=&f(Ux).
\end{eqnarray}
We have proven
\[
{T}(f)(x)=f(Ux),\;\;\forall\;f\in{\rm semn}(X),\; x\in X.
\]
It remains to show that $U$ is a unique. Suppose that there is an isomorphism  $E: X\rightarrow X$ so that
\[
{T}(f)(x)=f(Ex),\;\;\forall\;f\in{\rm semn}(X),\; x\in X.
\]
Then
\[
f(Ex)=f(Ux),\;\;\forall\;f\in{\rm semn}(X),\; x\in X.
\]
In particular (by taking $f=|x^*|$), we obtain
\[
|\langle x^*, Ex\rangle|=|\langle x^*, Ux\rangle|,\;\;\forall\;x^*\in X^*,\; x\in X.
\]
This, in turn, implies $E=\pm U$.
\end{proof}

\section{Fully order preserving mappings on the cone of Minkowski functionals}
We will show in this section that characterizations of fully order preserving mappings defined on certain classes of convex functions (such as Minkowski functionals, continuous norms, positive homogeneous convex functions of degree $p$ et. al) could be deduced from characterizations of fully order preserving mappings defined on seminorms.

Recall that a Minkowski functional  is a non-negative positively homogeneous convex function. For a Banach space $X$, we denote by   ${\rm mink}(X)$  the cone of all extended real-valued l.s.c. Minkowski functionals defined on $X$, and  ${\mathscr C}_{0,*}(X^*)$ is defined as in Section 8,  the cone of all $w^*$-closed convex subsets of $X^*$ containing the origin.

In this section, we shall give a representation theorem for fully order preserving mappings on the cone  ${\rm mink}(X)$. Denote that $[X^*]_{0} \coloneqq\{[0,x^*]: x^*\in X^*\}$.

\begin{theorem}\label{representation-mink}
Let ${T}: {\rm mink}(X)\to {\rm mink}(X)$ be  fully order preserving. Then
\begin{itemize}
\item[i)] $T|_{{\rm semn}(X)}$ (the restriction of $T$ to  ${\rm semn}(X)$) is again fully order preserving;
\item[ii)] there is a unique isomorphism $E: X\rightarrow X$ so that
\begin{equation}\label{representationmink}
 {T}f(x)=f(Ex),\;\;\forall f\in {\rm mink}(X), x\in X.
\end{equation}
\end{itemize}
\end{theorem}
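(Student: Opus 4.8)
The plan is to convert $T$ into a lattice isomorphism on convex sets, extract a linear bijection from the extended fundamental theorem of affine geometry, and then read off the representation on the sup-generating class $(X^*)^+$ of ${\rm mink}(X)$.

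First I would set $F\equiv\mathcal D T\mathcal S:\mathscr C_{0,*}(X^*)\rightarrow\mathscr C_{0,*}(X^*)$. By Proposition \ref{transformation} iii) this $F$ is fully order preserving, hence by Lemma \ref{latticepreserving} it is a lattice isomorphism of $(\mathscr C_{0,*}(X^*),\subseteq,\wedge,\vee)$. Every finite dimensional subspace of $X^*$ lies in $\mathscr C_{0,*}(X^*)$, so I would argue, exactly as in Lemma \ref{semibasic} ii), that $F$ sends one dimensional subspaces to one dimensional subspaces and preserves dimension; thus $F|_{\mathscr P(X^*)}$ is a fully order preserving self-map of $\mathscr P(X^*)$, and Theorem \ref{mainthm1} produces a bijective linear map $\Lambda:X^*\rightarrow X^*$ with $F(M)=\Lambda(M)$ for every finite dimensional subspace $M$.

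Next I would analyze $F$ on the one-sided segments $[0,x^*]$, which are precisely the $\mathcal D$-images of the generators $(x^*)^+\in(X^*)^+$. Since $[0,x^*]\subseteq\mathbb R x^*$, its image lies in $F(\mathbb R x^*)=\Lambda(\mathbb R x^*)=\mathbb R\Lambda x^*$. Using $[0,x^*]\wedge[0,-x^*]=\{0\}$ and $[0,x^*]\vee[0,-x^*]=[-x^*,x^*]$ together with the fact that $F$ respects $\wedge$ and $\vee$, I would rule out two-sided and unbounded images and conclude $F([0,x^*])=[0,\varepsilon(x^*)\Lambda x^*]$ for some sign $\varepsilon(x^*)\in\{\pm1\}$; injectivity of $F$ then forces $\varepsilon(x^*)=\varepsilon(-x^*)$. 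This already yields i): if $A=-A$, then $A=\bigvee_{x^*\in A}[0,x^*]$, so $F(A)=\bigvee_{x^*\in A}[0,\varepsilon(x^*)\Lambda x^*]$, whose generating set is symmetric because $\varepsilon(x^*)=\varepsilon(-x^*)$ and $\Lambda$ is linear; hence $F$ preserves $\mathscr C_{{\rm symm},*}(X^*)$, i.e. $T$ maps ${\rm semn}(X)$ onto itself, and since $T^{-1}$ is treated identically, $T|_{{\rm semn}(X)}$ is fully order preserving. Invoking Lemma \ref{continuous-semi} (as in the proof of Lemma \ref{semibasic}), $\Lambda$ carries $w^*$-compact symmetric sets to $w^*$-compact ones, so $\Lambda$ is $w^*$-to-$w^*$ continuous and $\Lambda=U^*$ for some isomorphism $U:X\rightarrow X$.

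For ii) I would first promote $\varepsilon$ to a single global sign: the assignment $x^*\mapsto\varepsilon(x^*)\Lambda x^*$ must be compatible across two dimensional configurations, which, via the intersection identity of Lemma \ref{semibasic} iii) now applied to $[0,x^*]$ in place of $[-x^*,x^*]$, forces $\varepsilon$ to be constant. Absorbing this sign into $U$ gives an isomorphism $E$ with $F([0,x^*])=[0,E^*x^*]$, equivalently $T((x^*)^+)=(E^*x^*)^+=(x^*)^+(E\,\cdot)$. Because $(X^*)^+$ is a purely atomic sup-generating class of ${\rm mink}(X)$ (Proposition \ref{sup-generating class} iii) and every $p\in{\rm mink}(X)$ satisfies $p=\sup_{x^*\in\partial p(0)}(x^*)^+$, Lemma \ref{continuity} i) lets me pass $T$ through the supremum to obtain $Tp(x)=\sup_{x^*}(x^*)^+(Ex)=p(Ex)$ for all $x\in X$. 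Uniqueness follows by testing on $f=(x^*)^+$: the identity $\langle x^*,Ex\rangle^+=\langle x^*,E'x\rangle^+$ for all $x^*,x$ forces $E=E'$, the asymmetry of Minkowski functionals removing the $\pm$ ambiguity present in Theorem \ref{representation-semi}.

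I expect the main obstacle to be precisely the non-symmetric segment analysis, since Lemma \ref{semibasic} only controls $F$ on the symmetric intervals $[-x^*,x^*]$, whereas the representation on all of ${\rm mink}(X)$ requires $F$ on the one-sided segments $[0,x^*]$: proving the image is again one-sided and bounded, and pinning the sign $\varepsilon$ to a single value through the two dimensional lattice identities, are the delicate steps, together with verifying (an analogue of Lemma \ref{continuous-semi} i) for Minkowski functionals) that $T$ preserves continuity so that $F([0,x^*])$ stays bounded.
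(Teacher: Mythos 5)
Your proposal is correct in outline and rests on the same pillars as the paper's proof: the duality $F=\mathcal D T\mathcal S$ acting on $\mathscr C_{0,*}(X^*)$, the lattice-isomorphism property, the extended fundamental theorem of affine geometry, and a sign analysis on the one-sided segments $[0,x^*]$. The organization differs, though. The paper first proves i) directly: it shows $F([0,-x^*])=-F([0,x^*])$ by observing that if ${\rm co}\bigl(F([0,-x^*])\cup F([0,x^*])\bigr)$ were not a segment it would contain some $[0,y_0^*]=F([0,z_0^*])$ with $[0,z_0^*]\subseteq[-x^*,x^*]$, forcing $[0,y_0^*]$ back into the union; only then does it invoke Theorem \ref{representation-semi} to produce $U$ with $E\in\{\pm U\}$ and resolve the sign at the end. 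You instead extract $\Lambda$ from Theorem \ref{mainthm1} applied to $\mathscr P(X^*)\subseteq\mathscr C_{0,*}(X^*)$ and derive i) afterwards from $\varepsilon(x^*)=\varepsilon(-x^*)$. That route works, but watch the ordering: to apply Theorem \ref{mainthm1} you must already know that $F$ carries lines to lines, and this is precisely the content of the symmetry relation above --- the chain structure of the down-set of $[0,x^*]$ only places $F([0,x^*])$ on a ray, and the identities $[0,x^*]\wedge[0,-x^*]=\{0\}$, $[0,x^*]\vee[0,-x^*]=[-x^*,x^*]$ do not by themselves prevent $F([-x^*,x^*])$ from being a two-dimensional cone spanned by two non-opposite rays. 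So the paper's small convexity argument is not optional in your scheme either; it has to come before, not after, the appeal to Theorem \ref{mainthm1}. Once that is supplied (together with the ${\rm mink}$-analogue of Lemma \ref{continuous-semi} you correctly flag, which bounds $F([0,x^*])$), your global-sign argument via the intersection identity of Lemma \ref{semibasic} iii) adapted to $[0,x^*]$, the passage through the sup-generating class ${X^*}^+$ using Lemma \ref{continuity} i), and the uniqueness of $E$ with no $\pm$ ambiguity all go through; indeed you are more explicit than the paper about why the sign $\varepsilon$ is globally constant.
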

\begin{proof}
{\rm i)}. Let $\mathcal D: {\rm mink}(X)\rightarrow{\mathscr C}_{0,*}(X^*)$ (resp. $\mathcal S: {\mathscr C}_{0,*}(X^*)\rightarrow{\rm mink}(X)$ ) be defined as \eqref{subdifferential} (resp. \eqref{supportfunction}), i.e.
\[
\mathcal D(f)=\partial f(0), \;\;f\in{\rm mink}(X).
\]
\[
({\rm resp.}\; \mathcal S(C)(x)=\sigma_{C}(x)=\sup_{x^*\in C}\langle x^*,x\rangle, \;\;C\in{\mathscr C}_{0,*}(X^*),\;x\in X.)
\]
Then
$F=\mathcal D{T}\mathcal S:{\mathscr C}_{0,*}(X^*)\rightarrow{\mathscr C}_{0,*}(X^*) $  is a fully order preserving mapping with $F(\{0\})=\{0\}$, the minimal element of the lattice $({\mathscr C}_{0,*}(X^*),\subseteq,\wedge,\vee)$.
By an argument similar to the proof of Lemma \ref{semibasic} i), we get that $F$ maps $[X^*]_0$ onto itself. Next, we show that
\begin{equation}\label{mink-semi}
F([0,-x^*]\subseteq \R{F}[0,x^*],\;\;{\rm for\;all}\;x^*\in X^*.
\end{equation}

Suppose, to the contrary, that there exists $x^*\in X^*$ so that if ${F}[0,-x^*]\not\subseteq \R{F}[0,x^*]$.
Then
\begin{equation}\nonumber
C\coloneqq{\rm co}\Big(F([0,-x^*])\bigcup F([0,x^*])\Big)=F([0,-x^*])\vee F([0,x^*])
\end{equation}
is a convex set but not a segment. Consequently,  there is $[0,y_0^*]\subseteq{C}$ satisfying
\[
[0,y_0^*]\not\subseteq{F}([0,-x^*])\bigcup{F}([0,x^*]).
\]
Since $F:[X^*]_0\rightarrow [X^*]_0$ is bijective, there exists $[0,z_0^*]\in[X^*]_0$ so that $F([0,z_0^*])=[0,y_0^*]$. Since $[0,y_0^*]\subseteq{C}$, we have that
\[
[0,z_0^*]\subseteq [0,-x^*]\vee [0,x^*]=[-x^*,x^*].
\]
It follows that
\[
[0,y^*_0]=F([0,z_0^*])\subseteq F([0,-x^*])\bigcup F([0,x^*]),
\]
which is a contradiction.
Hence, we have shown that $F$ maps lines into lines. By induction, it follows that $F$ induces a fully order preserving mapping defined from
$\mathscr{P}(X^*)$ onto itself. Applying Theorem \ref{mainthm1} and the proofs in Lemma \ref{semibasic} and Theorem \ref{representation-semi} we have
there exists a unique isomorphism $U: X\rightarrow X$ so that
\begin{equation}\label{semi-mink}
F\big{(}[-x^{*},x^{*}]\big{)}=U^{*}\big{(}[-x^{*},x^{*}]\big{)},\;\;{\rm for\;all\;}x^*\in X^*.
\end{equation}
Since $Tf=\sigma_{F(\partial f(0))}$ for all $f\in {\rm mink}(X)$ and $[X^{*}]$ is a sup-generating class of ${\rm semn}(X)$, then, by \eqref{semi-mink}, we get that $T|_{{\rm semn}(X)}$ is a fully order preserving mapping defined from ${\rm semn}(X)$ onto itself. Hence, by Theorem \ref{representation-semi}, we have that
\begin{equation}\label{semi-mink restriction}
{T|_{{\rm semn}(X)}}f(x)=f(Ex),~~~\forall f\in{\rm semn}(X),  x\in X,
\end{equation}
where $E\in\{\pm U\}$.

{\rm ii)}. By \eqref{semi-mink} and the fact that $[X^{*}]_{0}=\{[0,x^{*}]:x^{*}\in X^{*}\}$ is a sup-generating class of ${\rm mink}(X)$, it follows that there are only the following two possible cases for $T$:
\begin{equation}\label{representation-mink1}
{T}f(x)=f(Ux),~~~\forall f\in{\rm mink}(X),~ x\in X,
\end{equation}
or
\begin{equation}\label{representation-mink2}
{T}f(x)=f(-Ux),~~~\forall f\in{\rm mink}(X),~ x\in X.
\end{equation}

It follows from \eqref{mink-semi} and \eqref{semi-mink} that either
\[
F\big{(}[0,x^*] \big{)}=U^{*} \big{(}[0,x^*]\big{)},\;\;{\rm for\;all\;}x^*\in X^*,
\]
or
\[
F\big{(}[0,x^*]\big{)}=-U^{*}\big{(}[0,x^*]\big{)},\;\;{\rm for\;all\;}x^*\in X^*.
\]

Clearly, the former case is equivalent to \eqref{representation-mink1}, i.e. $E=U$;  and the later case is equivalent to \eqref{representation-mink2}, i.e. $E=-U$.
\end{proof}

We use ${\rm norm}(X)$ to denote all (equivalent) norms on $X$ and  ${\mathscr C}_{{\rm ssymm,c},*}(X^*) $ the set of all solid (i.e. with nonempty interior) $w^*$-compact symmetric convex subsets of $X^*$.
\begin{theorem}
Suppose that $X$ is a Banach space. Then
\begin{itemize}
\item[i)] every fully order preserving mapping $T:{\rm norm}(X)\rightarrow {\rm norm}(X)$ is a restriction of a fully order preserving mapping $S:{\rm semn}(X)\rightarrow{\rm semn}(X)$;
\item[ii)] every fully order preserving mapping $T:{\rm semn}(X)\rightarrow {\rm semn}(X)$ is  an extension of a fully order preserving mapping $S:{\rm norm}(X)\rightarrow{\rm norm}(X)$.
\end{itemize}
\end{theorem}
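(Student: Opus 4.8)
The plan is to treat the two parts asymmetrically: part (ii) is an immediate consequence of the representation already established in Theorem \ref{representation-semi}, while part (i) carries the real content and amounts to proving that representation for norms as well.

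For part (ii), let $T\colon{\rm semn}(X)\to{\rm semn}(X)$ be fully order preserving. By Theorem \ref{representation-semi} there is an isomorphism $E\colon X\to X$ with $Tf(x)=f(Ex)$ for every $f\in{\rm semn}(X)$. If $p\in{\rm norm}(X)$, then $Tp(x)=p(Ex)$ is again an equivalent norm: positive definiteness follows from injectivity of $E$, and the two-sided norm estimates defining an equivalent norm transfer through $E$ since $\|Ex\|$ is comparable to $\|x\|$; the same applies to $T^{-1}f(x)=f(E^{-1}x)$. Hence $T$ carries ${\rm norm}(X)$ bijectively onto itself and the equivalence $f\ge g\iff Tf\ge Tg$ simply restricts, so $S:=T|_{{\rm norm}(X)}$ is fully order preserving and $T$ extends it.

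For part (i) I would first record the reduction that, by part (ii) and Theorem \ref{representation-semi}, exhibiting a fully order preserving extension $S$ of a given fully order preserving $T\colon{\rm norm}(X)\to{\rm norm}(X)$ is equivalent to showing that $T$ itself has the form $p\mapsto p(E\cdot)$ for some isomorphism $E$ of $X$; once this is known, $Sf(x):=f(Ex)$ defines a fully order preserving map on ${\rm semn}(X)$ restricting to $T$. I see two routes to the isomorphism. The \emph{lattice route} transfers $T$ through the subdifferential/support correspondence of Proposition \ref{transformation}: the maps $\mathcal D$ and $\mathcal S$ identify ${\rm norm}(X)$ with the solid $w^*$-compact symmetric convex bodies $\mathscr C_{{\rm ssymm,c},*}(X^*)$ (the dual balls of equivalent norms), so $F\equiv\mathcal D T\mathcal S$ is a fully order preserving bijection of $\mathscr C_{{\rm ssymm,c},*}(X^*)$; one then tries to recover the action of $F$ on one-dimensional subspaces, invoke the extended fundamental theorem of affine geometry (Theorem \ref{mainthm1}) exactly as in Lemma \ref{semibasic} to produce a linear bijection $\Lambda$, and use Lemma \ref{continuous-semi} to upgrade $\Lambda$ to $U^*$ for an isomorphism $U$ of $X$. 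The \emph{direct route} builds $S$ on the function side in two steps, using that ${\rm semn}(X)$ is sup-complete (Proposition \ref{sup-complete}): since every continuous seminorm $q$ is the decreasing limit of the equivalent norms $q+\varepsilon\|\cdot\|_0$, set $\tilde Tq:=\bigwedge\{Tp:p\in{\rm norm}(X),\,p\ge q\}$ and check $\tilde Tp=Tp$ for $p\in{\rm norm}(X)$; since every lsc seminorm $q$ is the supremum of the continuous seminorms $|x^*|$ with $x^*\in\partial q(0)$ lying below it, set $Sq:=\bigvee\{\tilde Tr:r\in{\rm C}_{{\rm semn}}(X),\,r\le q\}$. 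Monotonicity of both formulas is immediate, and the interaction of $T$ with meets and suprema (Lemma \ref{continuity}) should show that running the same construction on $T^{-1}$ yields the inverse of $S$.

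The hard part will be verifying that the extension is a genuine bijection preserving order in both directions, i.e. that the $T$-extension and the $T^{-1}$-extension compose to the identity; this is exactly where the order-theoretic continuity of Lemma \ref{continuity} must be pushed through carefully. In the lattice route the corresponding obstacle is that $\mathscr C_{{\rm ssymm,c},*}(X^*)$ is not closed under intersection, so $F$ does not directly act on the lower-dimensional sets—symmetric segments and one-dimensional subspaces—that drive Lemma \ref{semibasic}; the crux there is to recover that action before Theorem \ref{mainthm1} applies, for instance by realizing a symmetric segment as an intersection of solid bodies and a one-dimensional subspace as the join of such segments, and checking $F$ respects these presentations.
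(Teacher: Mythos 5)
Your treatment of part (ii) is exactly the paper's: Theorem \ref{representation-semi} gives $Tf=f(E\cdot)$, and that formula visibly carries $\mathrm{norm}(X)$ bijectively onto itself, so the restriction is fully order preserving. Your reduction of part (i) --- that it suffices to show $T$ itself has the form $p\mapsto p(E\cdot)$ --- is also the right observation. Moreover, the first step of your ``direct route'' is the function-side mirror of what the paper actually does: it passes to $\mathcal T=\mathcal D T\mathcal S$ on the solid $w^*$-compact symmetric bodies and extends it to all $w^*$-compact symmetric convex sets by $\tilde{\mathcal T}(A)=\bigcap_{q\in\mathbb Q^+}\mathcal T\bigl(A\vee qB_{X^*}\bigr)$, which is precisely your $\tilde Tq=\bigwedge\{Tp: p\in\mathrm{norm}(X),\ p\ge q\}$ computed on subdifferentials along the particular decreasing chain $q\vee\lambda\|\cdot\|$. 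The paper then applies the seminorm representation theorem to the extended map on continuous seminorms and reads off the isomorphism; your second supremum step is then unnecessary, since $f\mapsto f(E\cdot)$ extends to all of $\mathrm{semn}(X)$ for free.

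The genuine gap is that everything you defer as ``the hard part'' is the actual content of the paper's proof, and it does not follow formally from Lemma \ref{continuity}. Concretely: (a) you must check that $\{p\in\mathrm{norm}(X):p\ge q\}$ is directed downward and that the greatest lower bound of $\{Tp\}$ exists in the relevant lattice --- the paper sidesteps this by using the countable decreasing chain $A_q=A\vee qB_{X^*}$, whose images form a decreasing family of $w^*$-compact convex sets with nonempty intersection, and then proves that the resulting set is independent of which solid body is used to regularize; this independence is exactly what makes the extension compatible with the same construction run on $T^{-1}$, which is the bijectivity claim you leave unverified. (b) Surjectivity of the extension is not automatic: the paper produces, for a target $M$, the candidate $K=\bigcap_q K'_q$ with $\mathcal T(K'_q)=M\vee qB_{X^*}$ and sandwiches $K'_q$ between $K\vee\lambda_1B_{X^*}$ and $K\vee\lambda_2B_{X^*}$ to verify $\tilde{\mathcal T}(K)=M$. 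Neither point is a one-line consequence of order-theoretic continuity, and your lattice route faces exactly the obstruction you name (the solid bodies are not closed under intersection), which the paper resolves by this same regularization rather than by re-running Lemma \ref{semibasic}. So the architecture is right and matches the paper, but the proof is not complete as written.
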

\begin{proof}
{\rm i)}. Suppose $T:{\rm norm}(X)\rightarrow {\rm norm}(X)$ is a fully order preserving mapping. Let $\mathcal D$ and $\mathcal S$ be defined as in previous sections, i.e.
$\mathcal T=\mathcal DT\mathcal S$,
then $\mathcal T$ is a fully order preserving mapping from ${\mathscr C}_{{\rm ssymm,c},*}(X^*)$ onto itself.
Let us denote by ${\mathscr C}_{{\rm symm,c},*}(X^*)$ the set of all symmetric $w^*$-compact convex subset of $X^*$.
We now extend $\mathcal T$ to be a fully order preserving mapping from ${\mathscr C}_{{\rm symm,c},*}(X^*)$ onto itself.
\newline
More precisely,
given any $A\in {\mathscr C}_{{\rm symm,c},*}(X^*)$, let $A_q=A\vee (q B_{X^*})=\overline{\rm co}^{w^*}(A\cup q B_{X^*})$, $q\in \mathbb{Q}^{+}$
where $\mathbb {Q}^{+}=\{r\in \mathbb {Q}:r>0\}$. Then $A=\bigcap\limits_{q\in \mathbb{Q}^+}A_q$.
Define
\begin{equation}
\tilde{\mathcal T}(A)=\bigcap\limits_{q\in \mathbb{Q}^+}\mathcal T(A_q).
\end{equation}
We claim that $\tilde{\mathcal T}$ is well defined and it is a fully order preserving mapping defined from ${\mathscr C}_{{\rm symm,c},*}(X^*)$ onto itself.
Indeed, choose any $D\in {\mathscr C}_{{\rm ssymm,c},*}(X^*)$, then there exist $\lambda_i\in \mathbb{Q}^+, i=1, 2$ such that
\begin{equation}\nonumber
\lambda_1B_{X^*}\subseteq D\subseteq \lambda_2B_{X^*}.
\end{equation}
Let $A^{\prime}_q=A\vee (qD)$, then
\begin{equation}\nonumber
A_{\lambda_1q}\subseteq A^{\prime}_q\subseteq A_{\lambda_2q},
\end{equation}
where $A_{\lambda_iq}=A\vee (\lambda_i q B_{X^*}), i=1, 2$. Since $\mathcal T$ is a fully order preserving mapping, then $\mathcal T A_{\lambda_1q}\subseteq \mathcal T A^{\prime}_q\subseteq \mathcal T A_{\lambda_2q}$, which entails that $\tilde {\mathcal T}(A)=\bigcap\limits_{q\in\mathbb {Q}^+}\mathcal F(A^{\prime}_q)$. This implies that the image of $\tilde {\mathcal T}$ does not dependent on the choice of $D\in{\mathscr C}_{{\rm ssymm,c},*}(X^*)$, hence $\tilde {\mathcal T}$ is
well defined.

For arbitrary $M\in {\mathscr C}_{{\rm symm,c},*}(X^*)$, $q\in\Q^{+}$, let $M_q=M\vee (qB_{X^*})$, then there exists $K^{\prime}_q\in {\mathscr C}_{{\rm ssymm,c},*}(X^*)$
such that $\mathcal T(K^{\prime}_q)=M_q$. Let $K=\bigcap\limits_{q\in \mathbb{Q}^+}K^{\prime}_q$, we now prove that
$\tilde{\mathcal T}(K)=M$.

Indeed, for arbitrary $q\in \mathbb {Q}^+$,$K^{\prime}_q\in {\mathscr{C}}_{{\rm ssymm,c},*}(X^*)$, then there exist $\lambda_{i}\in\mathbb {Q^+}, i=1, 2$ such that $\lambda_1 B_{X^*}\subseteq K^{\prime}_q\subseteq \lambda_2 B_{X^*}$, then
\begin{equation}\nonumber
K_{\lambda_1}\subseteq K^{\prime}_q\subseteq K_{\lambda_2},
\end{equation}
where $K_{\lambda_i}=K\vee (\lambda_i B_{X^*}), i=1, 2$. Then $\mathcal T(K_{\lambda_1})\subseteq M_q\subseteq \mathcal T
(K_{\lambda_2})$, which implies that  $\tilde {\mathcal T}(K)=\bigcap\limits_{\lambda\in \mathbb{Q}^+}\mathcal T(K_\lambda)=\bigcap\limits_{q\in\mathbb{Q}^+}M_q=M$. Hence $\tilde{\mathcal T}$ is surjective. Since $\mathcal T^{-1}$ is order preserving which implies the injective of $\tilde {\mathcal T}$. Therefore, $\tilde{\mathcal T}$ is a fully order preserving mapping of ${\mathscr C}_{{\rm symm,c},*}(X^*)$.
Consequently, $\overline {T}=\mathcal S\mathcal T\mathcal D$ is a fully order preserving mapping from ${\rm C}_{semn}(X)$ onto itself.
Hence, there exists a unique automorphism $U:X\rightarrow X$, so that
\begin{equation}\nonumber
(\overline Tf)(x)=f(Ex),{\rm for\; all}f\in {\rm C}_{semn}(X), x\in X;
\end{equation}
where $E\in\{\pm U\}$.
Therefore, there exists a fully order preserving mapping $S:{\rm semn}(X)\rightarrow{\rm semn}(X)$ so that
$S_{|\rm norm(X)}=T$


{\rm ii)}. Suppose that  $T:{\rm semn}(X)\rightarrow {\rm semn}(X)$ is a fully  order preserving mapping. By Theorem \ref{representation-semi}, $T$ is again fully order preserving on ${\rm norm}(X).$
\end{proof}

Recall that an extended real-valued function $f$ defined on a Banach space $X$ is said to be positively (resp. absolutely) homogeneous of degree $p$ if it satisfies
\begin{equation}\nonumber
f(\lambda x)=\lambda^ {p}f(x),~~~\forall \lambda\geq0,~ x\in X.
\end{equation}
 \begin{equation}\nonumber
({\rm resp.}\;\; f(\lambda x)=|\lambda|^ {p}f(x),~~~\forall \lambda\in\mathbb R,~ x\in X.)
\end{equation}
We denote by ${\rm ph}^{p}(X)$ (resp. ${\rm ah}^{p}(X)$), the cone of l.s.c. positively (resp. absolutely) homogeneous convex functions of degree $p$.

\begin{corollary}
Suppose that $T: {\rm ph}^{p}(X)\rightarrow {\rm ph}^{p}(X)$ (resp. ${\rm ah}^{p}(X)\rightarrow {\rm ah}^{p}(X)$) is a fully order preserving mapping. Then there is a unique isomorphism $U: X\rightarrow X$ so that
\begin{equation}\label{representation-ph}
T(f)(x)=f(Ux),\;\;{\forall} \;f\in {\rm ph}^{p}(X), \;x\in X.
\end{equation}
\begin{equation}\nonumber
({\rm resp.} \;\;T(f)(x)=f(\pm Ux),\;\;{\forall} \;f\in {\rm ah}^{p}(X), \;x\in X.)
\end{equation}

\end{corollary}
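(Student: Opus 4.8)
The plan is to reduce the degree-$p$ case to the degree-one case already settled in Theorems \ref{representation-mink} and \ref{representation-semi}. Throughout take $p\geq1$. For $p=1$ one has ${\rm ph}^{1}(X)={\rm subl}(X)$ and ${\rm ah}^{1}(X)={\rm semn}(X)$, so the conclusion is exactly Theorem \ref{some classes representation} (respectively Theorem \ref{representation-semi}) and nothing more is needed; so assume $p>1$. The key device is the \emph{power map} $\Phi\colon g\mapsto g^{p}$, which I claim is an order isomorphism from ${\rm mink}(X)$ onto ${\rm ph}^{p}(X)$ (and from ${\rm semn}(X)$ onto ${\rm ah}^{p}(X)$), with inverse $f\mapsto f^{1/p}$.

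First I would verify that $\Phi$ is well defined in both directions; this is the step I expect to be the main obstacle. If $g\in{\rm mink}(X)$, then $g\geq0$ is lsc and positively homogeneous of degree one, and since $t\mapsto t^{p}$ is convex and increasing on $[0,\infty)$ the composition $g^{p}$ is a lsc convex positively homogeneous function of degree $p$, i.e. $g^{p}\in{\rm ph}^{p}(X)$. Conversely, let $f\in{\rm ph}^{p}(X)$. Restricting $f$ to a ray, the function $t\mapsto f(tx_{0})=t^{p}f(x_{0})$ must be convex, which for $p>1$ forces $f(x_{0})\geq0$; hence $f\geq0$ and $f^{1/p}$ is a well-defined nonnegative positively homogeneous function of degree one. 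Moreover $f^{1/p}$ is precisely the Minkowski gauge of the closed convex set $D=\{x\in X:f(x)\leq1\}$ (which contains $0$, since $f(0)=0$), because $\inf\{\lambda>0:x\in\lambda D\}=\inf\{\lambda>0:f(x)\leq\lambda^{p}\}=f(x)^{1/p}$; therefore $f^{1/p}\in{\rm mink}(X)$. The two assignments are mutually inverse, and since $t\mapsto t^{p}$ is strictly increasing on $[0,\infty)$ one has $g_{1}\leq g_{2}\Longleftrightarrow g_{1}^{p}\leq g_{2}^{p}$ pointwise, so both $\Phi$ and $\Phi^{-1}$ are order preserving. The symmetric (absolutely homogeneous) case is identical, with $D$ now symmetric, so $\Phi$ carries ${\rm semn}(X)$ onto ${\rm ah}^{p}(X)$.

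With $\Phi$ in hand, given a fully order preserving $T\colon{\rm ph}^{p}(X)\to{\rm ph}^{p}(X)$ I would form the conjugate $\widetilde{T}=\Phi^{-1}T\Phi\colon{\rm mink}(X)\to{\rm mink}(X)$. Since conjugation by an order isomorphism preserves the property of being fully order preserving, $\widetilde{T}$ is fully order preserving, and Theorem \ref{representation-mink} yields a unique isomorphism $U\colon X\to X$ with $\widetilde{T}g(x)=g(Ux)$. Transporting back, for $f\in{\rm ph}^{p}(X)$ and $g=f^{1/p}$ one computes
\begin{equation}\nonumber
(Tf)(x)=\bigl(\widetilde{T}g\bigr)^{p}(x)=\bigl(g(Ux)\bigr)^{p}=\bigl(f^{1/p}(Ux)\bigr)^{p}=f(Ux),
\end{equation}
which is \eqref{representation-ph}, and uniqueness of $U$ is inherited from Theorem \ref{representation-mink}. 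For $T\colon{\rm ah}^{p}(X)\to{\rm ah}^{p}(X)$ the same conjugation lands in ${\rm semn}(X)$ and Theorem \ref{representation-semi} applies instead, producing $\widetilde{T}g(x)=g(\pm Ux)$; the identical computation then gives $(Tf)(x)=f(\pm Ux)$, with the $\pm$ ambiguity inherited from the seminorm case. The only genuinely delicate point is the bijectivity and order-preservation of the power map $\Phi$ — in particular the nonnegativity of elements of ${\rm ph}^{p}(X)$ for $p>1$ and the gauge identification of $f^{1/p}$ — while everything else is formal bookkeeping.
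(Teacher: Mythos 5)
Your argument is correct and is essentially the paper's own proof: both conjugate $T$ by the power map $g\mapsto g^{p}$ to transfer the problem to ${\rm mink}(X)$ (resp.\ ${\rm semn}(X)$) and then invoke Theorem \ref{representation-mink} (resp.\ Theorem \ref{representation-semi}). The only difference is that you verify directly that $g\mapsto g^{p}$ is an order isomorphism between the two cones (the nonnegativity of elements of ${\rm ph}^{p}(X)$ for $p>1$ and the identification of $f^{1/p}$ as the gauge of $\{f\leq1\}$), whereas the paper simply cites Rockafellar for this equivalence.
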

\begin{proof}
Since a l.s.c. convex function $f$ is positively (resp. absolutely) homogeneous of degree $p$ ($1\leq p<\infty$) if and only if it is of the form $f=k^{p}$ for some l.s.c. Minkowski functional (resp. seminorm) $k$ (see, for instance \cite{Ro}),
\begin{equation}\label{p-homogeneous}
\varphi(f^{1/p})(x)=(Tf)^{1/p}(x),\;f\in{\rm ph}^{p}(X)\;({\rm resp.}\;{\rm ah}^{p}(X)), \;x\in X
\end{equation}
defines a fully order preserving mapping $\varphi:{\rm mink}(X)\rightarrow {\rm mink}(X)$ (resp. ${\rm semn}(X)\rightarrow {\rm semn}(X)$). By Theorem \ref{representation-mink} (resp. Theorem \ref{representation-semi}), there is a unique isomorphism $U:X\rightarrow X$ so that
\begin{equation}\nonumber
\varphi(f^{1/p})(x)=f^{1/p}(Ux),\;\;f\in{\rm ph}^{p}(X), \;x\in X.
\end{equation}
\begin{equation}\nonumber
({\rm resp.} \;\;\varphi(f^{1/p})(x)=f^{1/p}(\pm Ux),\;\;f\in{\rm ah}^{p}(X), \;x\in X.)
\end{equation}
This combining with  \eqref{p-homogeneous} entail that \eqref{representation-ph} holds.
\end{proof}

\section{Fully order preserving mappings on the cone sublinear functions}
We conclude this paper with a representation theorem of fully order preserving mappings defined on cone consisting of sublinear functions, i.e. positively homogenous convex functions.

The following theorem is a  more general version of the representation theorem of fully order preserving mappings on the cone of sublinear functions.

\begin{theorem}\label{representation for sublinear functions}
Suppose that $C\in\mathfrak{C}(X)$ is a bounded sup-complete, which consists of continuous convex functions, and $T:C\rightarrow C$ is a fully order preserving mapping.
Assume that $C$ admits a  $w^*$-closed  sup-generating class $G\subseteq X^*$, which is a subspace of $X^*$. Then there exists a continuous linear mapping $U: X\rightarrow X$ satisfying  $\tilde{U}: X/F\rightarrow X/F$ (defined by
\[
\tilde{U}(x+F)=U(x),\;\; x+F\in X/F)
\]
is an linear isomorphism, and $\varphi_0\in X^*$ so that
\begin{equation}\label{representation-subl}
(Tf)(x)=f(Ux)+\langle\varphi_0,x\rangle,\;{\rm for\;all\;}x\in X,
\end{equation}
where $F={^\bot G}\coloneqq\{x\in X:\langle\varphi,x\rangle=0,\;{\rm for\;all}\;\varphi\in G\}.$
\end{theorem}
\begin{proof}
Since $G\in\mathfrak{G}_2\subseteq\mathfrak{G}_1$ is a $w^*$-closed subspace of $X^*$, $G$ is a perfect generating class of $C$. By Theorem \ref{sublinear}, $T$ is $w^*$-continuous affine on $C$ with $TG=G$.
Write $T(0)=\varphi_0\in G$, and let $S=T|_G-\varphi_0$. Then $S$ is a fully order preserving $w^*$-to-$w^*$ continuous linear operator on $G$.
Put $F={^\bot G}$. Then $(X/F)^*=F^\bot=G$.
Let $U:X/F\rightarrow X/F$ be defined for $x$ by
\[
\langle S\varphi,x\rangle=\langle\varphi,Ux\rangle,\;\;{\rm for\;all\;}\varphi\in G.
\]
Then it is easy to see that $U$ is a bounded linear operator with $U^*=S$. Since $S$ is bijective, it yields that $\tilde{U}:X/F\rightarrow X/F$ is bijective, hence, a isomorphism. Therefore, $S\varphi=\varphi U$ for all $\varphi\in G$. Now, given $f\in C$, by Lemma \ref{continuity} {\rm i)},
\[
Sf=\sup\limits_{h\in B_{Sf}(G)}h=f(U\cdot),
\]
that is, \eqref{representation-subl} holds.
\end{proof}
The following corollary is immediately from Theorem \ref{representation for sublinear functions} by letting $C={\rm subl}(X)$.
\begin{corollary}\label{sublinear representation theorem}
Suppose that $T: {\rm subl}(X)\rightarrow {\rm subl}(X)$ is a fully order preserving mapping. Then  there exists a linear  isomorphism $U: X\rightarrow X$, and $\varphi_0\in X^*$  so that
\[
(Tf)(x)=f(Ux)+\langle\varphi_0,x\rangle,\;{\rm for\;all\;}x\in X.
\]
\end{corollary}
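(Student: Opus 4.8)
The plan is to deduce this from the theorem immediately preceding it, which is stated only for cones of \emph{continuous} convex functions, by first specializing to the continuous subcone and then propagating the resulting formula to all lower semicontinuous sublinear functions. First I would restrict attention to $C_{\rm subl}(X)$. By Propositions \ref{downward} and \ref{sup-complete} this cone is bounded sup-complete and downward with respect to itself, it consists of continuous convex functions, and by Proposition \ref{sup-generating class} ii) it admits $X^*$ as a $w^*$-closed sup-generating class which is in addition a linear subspace of $X^*$. Before invoking the preceding theorem I must check that $T$ restricts to a fully order preserving selfmap of $C_{\rm subl}(X)$; this is precisely what the argument in Corollary \ref{bounded rho continuity} delivers, namely that $T$ carries continuous sublinear functions to continuous sublinear functions, and the same applied to $T^{-1}$ shows the restriction is a bijection.

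With these hypotheses in place, the preceding theorem furnishes a continuous linear map $U:X\to X$ and $\varphi_0\in X^*$ with $(Tf)(x)=f(Ux)+\langle\varphi_0,x\rangle$ for every $f\in C_{\rm subl}(X)$, where a priori only the induced map $\tilde U:X/F\to X/F$ is an isomorphism, with $F={}^{\bot}X^*$. The decisive simplification is that the choice $G=X^*$ forces $F={}^{\bot}X^*=\{0\}$ by the Hahn--Banach theorem, since $X^*$ separates the points of $X$; hence $X/F=X$ and $U:X\to X$ is itself a linear isomorphism. In particular, taking $f=x^*\in X^*\subseteq C_{\rm subl}(X)$ yields $(Tx^*)(x)=\langle x^*,Ux\rangle+\langle\varphi_0,x\rangle$ for all $x^*\in X^*$.

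It remains to pass from $C_{\rm subl}(X)$ to all of ${\rm subl}(X)$. For arbitrary $f\in{\rm subl}(X)$ I would use that $X^*$ is a sup-generating class, writing $f=\sup C(f)$ with $C(f)=\partial f(0)=\{x^*\in X^*:x^*\leq f\}$. Applying the formula of the previous paragraph to each $x^*\in C(f)$ and taking the pointwise supremum gives
\[
\sup_{x^*\in C(f)}(Tx^*)(x)=\Big(\sup_{x^*\in C(f)}\langle x^*,Ux\rangle\Big)+\langle\varphi_0,x\rangle=f(Ux)+\langle\varphi_0,x\rangle,
\]
which is again a lower semicontinuous sublinear function and hence lies in ${\rm subl}(X)$. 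Since both $\sup C(f)=f$ and this supremum belong to the cone, Lemma \ref{continuity} i) applies and gives $Tf=\sup_{x^*\in C(f)}Tx^*=f(U\cdot)+\langle\varphi_0,\cdot\rangle$, which is exactly the asserted representation.

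I expect the genuine work to be bookkeeping rather than a new idea: one must be certain that $T$ truly preserves continuity so that the preceding theorem is applicable to $C_{\rm subl}(X)$, and that at the extension step both relevant suprema land inside ${\rm subl}(X)$ so that Lemma \ref{continuity} i) is legitimate. Once $F=\{0\}$ is identified the honest isomorphism $U:X\to X$ comes for free, and the supremum-commutation packaged in Lemma \ref{continuity} carries the formula from the sup-generating class $X^*$ to the whole cone.
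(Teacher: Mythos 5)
Your proof is correct and follows the route the paper intends: the corollary is stated there without proof, as a direct consequence of the preceding theorem applied with $G=X^*$, for which $F={}^{\bot}X^*=\{0\}$ forces $U:X\rightarrow X$ to be an isomorphism. The paper glosses over the fact that that theorem is only stated for cones of continuous convex functions, so your two additional steps --- verifying via the argument of Corollary \ref{bounded rho continuity} that $T$ restricts to a fully order preserving bijection of $C_{\rm subl}(X)$, and then propagating the resulting formula to all of ${\rm subl}(X)$ through the sup-generating class $X^*$ together with Lemma \ref{continuity} i) --- are precisely the bookkeeping needed to make the deduction rigorous, and both suprema do land in ${\rm subl}(X)$ as you check, so the application of Lemma \ref{continuity} i) is legitimate.
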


\vskip 3mm
\noindent\textbf{Acknowledgements}\quad \small{This work was supported  by the Natural Science Foundation of China (Grant No. 11731010 \& 11371296). The authors would like to thank the referees for their constructive comments and helpful suggestions. The first named author is grateful to Professor Shangquan Bu, Professor Chunlan Jiang and Professor Quanhua Xu for their
very helpful conversations on this paper.}

\bibliographystyle{unsrt}


\end{document}